\newcommand{\acli}[1]{\textit{\acl{#1}}}		
\newcommand{\acdef}[1]{\textit{\acl{#1}} \textup{(\acs{#1})}\acused{#1}}		
\colorlet{MyRed}{Crimson!75!Black}
\colorlet{MyBlue}{MediumBlue}
\colorlet{MyGreen}{DarkGreen!80!Black}
\titlespacing{\paragraph}{0em}{\medskipamount}{1em}
\titlespacing{\subparagraph}{0em}{0em}{0.5em}
\newcommand{\afterhead}{.}
\newcommand{\para}[1]{\paragraph{\textbf{#1\afterhead}}}
\newcommand{\cmark}{\checkmark}		
\newcommand{\xmark}{\ding{53}}
\newcommand{\citef}[1]{\citeauthor{#1} \citep{#1}}
\crefname{algorithm}{Algorithm}{Algorithms}
\theoremstyle{plain}
\newtheorem{theorem}{Theorem}		
\newtheorem{lemma}{Lemma}		
\newtheorem{proposition}{Proposition}		
\newtheorem*{corollary*}{Corollary}		
\theoremstyle{definition}
\newtheorem{definition}{Definition}		
\newtheorem{assumption}{Assumption}		
\newtheorem{example}{{\raisebox{1pt}{\small$\blacktriangleright$}} Example}		
\newtheorem*{definition*}{Definition}		
\newtheorem*{assumption*}{Assumptions}		
\newtheorem*{example*}{Example}		
\theoremstyle{remark}
\newtheorem*{remark*}{Remark}		
\newcounter{proofpart}
\numberwithin{remark}{section}		
\newcommand{\debug}[1]{#1}		
\newcommand{\newmacro}[2]{\newcommand{#1}{\debug{#2}}}		
\DeclarePairedDelimiter{\bracks}{[}{]}		
\DeclarePairedDelimiter{\parens}{(}{)}		
\DeclarePairedDelimiter{\abs}{\lvert}{\rvert}		
\DeclarePairedDelimiterX{\setdef}[2]{\{}{\}}{#1:#2}		
\DeclarePairedDelimiterXPP{\exclude}[1]{\mathopen{}\setminus}{\{}{\}}{}{#1}
\newcommand{\cf}{cf.\xspace}		
\newcommand{\eg}{e.g.,\xspace}		
\newcommand{\ie}{i.e.,\xspace}		
\newcommand{\textpar}[1]{\textup(#1\textup)}		
\newcommand{\txs}{\textstyle}		
\newcommand{\alt}[1]{#1'}		
\newcommand{\N}{\mathbb{N}}		
\newcommand{\R}{\mathbb{R}}		
\DeclareMathOperator{\bigoh}{\mathcal O}		
\newmacro{\coef}{\lambda}		
\newmacro{\dd}{\:d}		
\newmacro{\subs}{\leftarrow}      
\newcommand{\eps}{\varepsilon}		
\DeclareMathOperator{\dom}{dom}		
\newcommand{\from}{\colon}		
\newmacro{\points}{\mathcal{X}}		
\newmacro{\intpoints}{\points^{\circ}}		
\newmacro{\point}{x}		
\newmacro{\pointalt}{\alt\point}		
\newmacro{\dpoints}{\mathcal{Y}}		
\newmacro{\dpoint}{y}		
\newmacro{\dpointalt}{\alt\dpoint}		
\newmacro{\base}{p}		
\newmacro{\basealt}{q}		
\DeclareMathOperator{\dist}{dist}		
\newmacro{\open}{\mathcal{U}}		
\newmacro{\cpt}{\mathcal{K}}		
\newmacro{\nhd}{\mathcal{U}}		
\newmacro{\start}{1}		
\newmacro{\afterstart}{2}		
\newmacro{\prestart}{0}		
\newmacro{\running}{\start,\afterstart,\dotsc}		
\newmacro{\run}{t}		
\newmacro{\runalt}{s}		
\newmacro{\nRuns}{T}		
\newmacro{\runs}{\mathcal{\nRuns}}		
\newmacro{\state}{X}		
\newmacro{\dstate}{Y}		
\newmacro{\aux}{Z}		
\newcommand{\new}[1][\point]{#1^{+}}		
\newcommand{\init}[1][\state]{\debug{#1}_{\start}}		
\newcommand{\iter}[1][\state]{\debug{#1}_{\runalt}}		
\newcommand{\nextiter}[1][\state]{\debug{#1}_{\runalt+1}}		
\newcommand{\prev}[1][\state]{\debug{#1}_{\run-1}}		
\newcommand{\curr}[1][\state]{\debug{#1}_{\run}}		
\renewcommand{\next}[1][\state]{\debug{#1}_{\run+1}}		
\newcommand{\last}[1][\state]{\debug{#1}_{\nRuns}}		
\newcommand{\avg}[1][\state]{\debug{\bar#1}_{\nRuns}}		
\newmacro{\ctime}{t}		
\newmacro{\ctimealt}{s}		
\newmacro{\cstart}{0}		
\newmacro{\horizon}{T}		
\newmacro{\vecspace}{\mathcal{V}}		
\newmacro{\vdim}{d}		
\newmacro{\vvec}{x}		
\newmacro{\bvec}{e}		
\newmacro{\unitvec}{z}		
\newmacro{\subspace}{\mathcal{W}}		
\newmacro{\thull}{\mathcal{Z}}		
\newmacro{\tvec}{z}		
\DeclarePairedDelimiterX{\braket}[2]{\langle}{\rangle}{#1,#2}		
\newcommand{\dual}[1]{#1^{\ast}}		
\newmacro{\dspace}{\dual\vecspace}		
\newmacro{\dvec}{v}		
\newmacro{\dbvec}{\eps}		
\newmacro{\ones}{\mathbf{1}}		
\newmacro{\mat}{\mathbf{M}}		
\newmacro{\eye}{\mathbf{I}}		
\newmacro{\pointmat}{\mathbf{X}}		
\newmacro{\tmat}{\mathbf{Z}}		
\newmacro{\dmat}{\mathbf{V}}		
\newmacro{\hmat}{\mathbf{H}}		
\DeclareMathOperator{\relint}{ri}		
\newmacro{\cvx}{\mathcal{C}}		
\newmacro{\subd}{\partial}		
\newmacro{\strong}{\alpha}		
\newmacro{\smooth}{L}		
\newmacro{\lips}{L}		
\newmacro{\hreg}{h}		
\newmacro{\breg}{D}		
\newmacro{\proxmap}{P}		
\newmacro{\mirror}{Q}		
\newmacro{\fench}{F}		
\newmacro{\hstr}{K}		
\newmacro{\depth}{H}		
\newmacro{\iBreg}{\breg_{\start}}		
\newmacro{\zone}{\mathbb{D}}		
\newmacro{\subpoints}{\points_{\circ}}		
\DeclarePairedDelimiterXPP{\prox}[2]{\proxmap_{#1}}{(}{)}{}{#2}		
\DeclareMathOperator*{\argmax}{arg\,max}		
\DeclareMathOperator*{\argmin}{arg\,min}		
\newmacro{\obj}{f}		
\newmacro{\objalt}{g}		
\newmacro{\sobj}{F}		
\newmacro{\param}{\theta}		
\newmacro{\params}{\Theta}		
\newmacro{\gvec}{g}		
\newmacro{\vecfield}{V}		
\newmacro{\oper}{A}		
\newmacro{\gbound}{G}		
\newmacro{\vbound}{G}		
\newcommand{\sol}[1][\point]{#1^{\ast}}		
\newcommand{\sols}{\sol[\points]}		
\newmacro{\minmax}{\Phi}		
\newmacro{\minvar}{\point_{1}}		
\newmacro{\minvaralt}{\alt\minvar}		
\newmacro{\minvars}{\points_{1}}		
\newmacro{\maxvar}{\point_{2}}		
\newmacro{\maxvars}{\points_{2}}		
\newmacro{\maxvaralt}{\alt\maxvar}		
\newcommand{\eq}{\sol}		
\newmacro{\play}{i}		
\newmacro{\playalt}{j}		
\newmacro{\nPlayers}{n}		
\newmacro{\players}{\mathcal{N}}		
\newmacro{\pure}{k}		
\newmacro{\purealt}{l}		
\newmacro{\nPures}{m}		
\newmacro{\pures}{\mathcal{M}}		
\newmacro{\cost}{c}		
\newmacro{\loss}{\ell}		
\newmacro{\pay}{u}		
\newmacro{\payv}{v}		
\newmacro{\pot}{\obj}		
\newmacro{\game}{\mathcal{G}}		
\newmacro{\gamefull}{\game(\players,\points,\pay)}		
\newmacro{\fingame}{\Gamma}		
\newmacro{\fingamefull}{\Gamma(\players,\pures,\pay)}		
\DeclareMathOperator{\ex}{\mathbb{E}}		
\DeclareMathOperator{\prob}{\mathbb{P}}		
\newmacro{\sample}{\omega}		
\newmacro{\samples}{\Omega}		
\newmacro{\filter}{\mathcal{F}}		
\newmacro{\probspace}{(\samples,\filter,\prob)}		
\newmacro{\event}{E}       
\newmacro{\eventalt}{H}       
\newmacro{\mean}{\mu}		
\newmacro{\sdev}{\sigma}		
\newmacro{\variance}{\sdev^{2}}		
\newmacro{\dkl}{D_{\mathrm{KL}}}		
\newcommand{\as}{\debug{\textpar{a.s.}}\xspace}		
\providecommand\given{}		
\DeclarePairedDelimiterXPP{\exof}[1]{\ex}{[}{]}{}{
\renewcommand\given{\nonscript\:\delimsize\vert\nonscript\:\mathopen{}} #1}
\DeclarePairedDelimiterXPP{\probof}[1]{\prob}{(}{)}{}{
\renewcommand\given{\nonscript\:\delimsize\vert\nonscript\:\mathopen{}} #1}
\newmacro{\step}{\gamma}		
\newmacro{\temp}{\eta}		
\newmacro{\signal}{g}		
\newmacro{\error}{Z}		
\newmacro{\noise}{U}		
\newmacro{\bias}{b}		
\newmacro{\brown}{W}		
\newmacro{\snoise}{\xi}		
\newmacro{\sbias}{\psi}		
\newmacro{\sbound}{M}		
\newmacro{\bbound}{B}		
\newmacro{\noisedev}{\sdev}		
\newmacro{\noisevar}{\variance}		
\newmacro{\mix}{\delta}		
\newmacro{\pexp}{p}		
\newmacro{\qexp}{q}		
\newmacro{\pert}{w}		
\newmacro{\pertvar}{W}		
\newmacro{\unitvar}{\unitvec}		
\newmacro{\flow}{\Phi}		
\newmacro{\graph}{\mathcal{G}}
\newmacro{\vertices}{\mathcal{V}}
\newmacro{\edges}{\mathcal{E}}
\DeclarePairedDelimiterX{\product}[2]{\langle}{\rangle}{#1,#2}		
\DeclarePairedDelimiter{\norm}{\lVert}{\rVert}		
\DeclarePairedDelimiterXPP{\dnorm}[1]{}{\lVert}{\rVert}{_{\ast}}{#1}		
\DeclarePairedDelimiterXPP{\twonorm}[1]{}{\lVert}{\rVert}{_{2}}{#1}		
\DeclarePairedDelimiterXPP{\pnorm}[1]{}{\lVert}{\rVert}{_{2}}{#1}		
\newmacro{\gmat}{g}		
\newmacro{\gdist}{\dist_{\gmat}}
\newmacro{\ball}{\mathbb{B}}		
\newmacro{\sphere}{\mathbb{S}}		
\newmacro{\const}{c}
\newcommand{\method}{\debug{\textsc{AdaMir}}\xspace}
\newcommand{\adagrad}{\debug{\textsc{AdaGrad}}\xspace}
\newcommand{\accelegrad}{\debug{\textsc{AcceleGrad}}\xspace}
\newcommand{\unixgrad}{\debug{\textsc{UnixGrad}}\xspace}
\newcommand{\adaprox}{\debug{\textsc{AdaProx}}\xspace}
\newmacro{\fins}{F}
\newmacro{\res}{\delta}		
\newmacro{\cumres}{S}		
\newmacro{\commbound}{H}		
\newmacro{\smoothbound}{A}		
\newmacro{\stochbound}{B}		
\newmacro{\price}{p}
\newmacro{\weight}{w}
\begin{document}


\newcommand{\longtitle}{\uppercase{Adaptive first-order methods revisited:\\
Convex optimization without Lipschitz requirements}}
\newcommand{\runtitle}{\uppercase{Adaptive First-Order Methods without Lipschitz Requirements}}

\title
[\runtitle]		
{\longtitle}		

\author[K.~Antonakopoulos]{Kimon Antonakopoulos$^{\ast}$}
\email{kimon.antonakopoulos@inria.fr}
\author
[P.~Mertikopoulos]
{Panayotis Mertikopoulos$^{\ast,\diamond}$}
\address{$^{\ast}$\,%
Univ. Grenoble Alpes, CNRS, Inria, Grenoble INP, LIG, 38000, Grenoble, France.}
\address{$^{\diamond}$\,%
Criteo AI Lab.}
\email{panayotis.mertikopoulos@imag.fr}

\newacro{LHS}{left-hand side}
\newacro{RHS}{right-hand side}
\newacro{iid}[i.i.d.]{independent and identically distributed}
\newacro{lsc}[l.s.c.]{lower semi-continuous}
\newacro{NE}{Nash equilibrium}
\newacroplural{NE}[NE]{Nash equilibria}

\newacro{LC}{Lipschitz continuity}
\newacro{LS}{Lipschitz smoothness}
\newacro{RC}{relative continuity}
\newacro{RS}{relative smoothness}

\newacro{SFO}{stochastic first-order oracle}
\newacro{DGF}{distance-generating function}

\newacro{PR}{proportional response}
\newacro{EGD}{entropic gradient descent}
\newacro{VI}{variational inequality}
\newacroplural{VI}[VIs]{variational inequalities}
\newacro{AMD}[\method]{adaptive mirror descent}
\newacro{UPGD}{universal primal gradient descent}
\newacro{GMP}{generalized mirror-prox}
\newacro{MD}{mirror descent}
\newacro{NL}[\textsc{NoLips}]{non-Lipschitz}

\begin{abstract}
%
%
We propose a new family of adaptive first-order methods for a class of convex minimization problems that may fail to be Lipschitz continuous or smooth in the standard sense.
Specifically, motivated by a recent flurry of activity on \acf{NL} optimization, we consider problems that are continuous or smooth relative to a reference Bregman function \textendash\ as opposed to a global, ambient norm (Euclidean or otherwise).
These conditions encompass a wide range of problems with singular objective,
such as Fisher markets, Poisson tomography, $D$-design, and the like.
In this setting, the application of existing order-optimal adaptive methods \textendash\ like \unixgrad or \accelegrad \textendash\ is not possible, especially in the presence of randomness and uncertainty.
The proposed method, \acdef{AMD}, aims to close this gap by concurrently achieving min-max optimal rates in problems that are relatively continuous or smooth, including stochastic ones.
\end{abstract}

\subjclass[2020]{%
Primary 90C25, 90C15, 90C30;
secondary 68Q25, 90C60.}

\keywords{%
Adaptive methods;
mirror descent;
relative Lipschitz smoothness\,/\,continuity}

\allowdisplaybreaks		
\acresetall		
\maketitle

\section{Introduction}
\label{sec:introduction}

Owing to their wide applicability and flexibility,
first-order methods continue to occupy the forefront of research in learning theory and continuous optimization.
Their analysis typically revolves around two basic regularity conditions for the problem at hand:
\begin{enumerate*}
[(\itshape i\hspace*{.5pt}\upshape)]
\item
Lipschitz continuity of the problem's objective,
and\,/\,or
\item
Lipschitz continuity of its gradient (also referred to as \emph{Lipschitz smoothness}).
\end{enumerate*}
Depending on which of these conditions holds, the lower bounds for first-order methods with perfect gradient input are $\Theta(1/\sqrt{\nRuns})$ and $\Theta(1/\nRuns^{2})$ after $\nRuns$ gradient queries, and they are achieved by gradient descent and Nesterov's fast gradient algorithm respectively \citep{Nes83,Nes04,Xia10}.
By contrast, if the optimizer only has access to stochastic gradients (as is often the case in machine learning and distributed control), the corresponding lower bound is $\Theta(1/\sqrt{\nRuns})$ for both classes \citep{Nes04,NY83,Bub15}.

This disparity in convergence rates has led to a surge of interest in adaptive methods that can seamlessly interpolate between these different regimes.
Two state-of-the-art methods of this type are the \accelegrad and \unixgrad algorithms of \citet{LYC18} and \citet{KLBC19}:
both algorithms simultaneously achieve
an $\bigoh(1/\sqrt{\nRuns})$ value convergence rate in non-smooth problems,
an $\bigoh(1/\nRuns^{2})$ rate in smooth problems,
and
an $\bigoh(1/\sqrt{\nRuns})$ average rate if run with bounded, unbiased stochastic gradients (the smoothness does not affect the rate in this case).
In this regard, \unixgrad and \accelegrad both achieve a ``best of all worlds'' guarantee which makes them ideal as off-the-shelf solutions for applications where the problem class is not known in advance \textendash\ \eg as in online traffic routing, game theory, etc.

At the same time, there have been considerable efforts in the literature to account for problems that do not adhere to these Lipschitz regularity requirements \textendash\ such as Fisher markets, quantum tomography, $D$-design, Poisson deconvolution / inverse problems, and many other examples \citep{BDX11,BBT17,LFN18,Lu19,Teb18,BSTV18}.
The reason that the Lipschitz framework fails in this case is that, even when the problem's domain is bounded, the objective function exhibits singularities at the boundary, so it cannot be Lipschitz continuous or smooth.
As a result, no matter how small we pick the step-size of a standard gradient method (adaptive or otherwise), the existence of domains with singular gradients can \textendash\ and typically \emph{does} \textendash\ lead to catastrophic oscillations (especially in the stochastic case).

A first breakthrough in this area was provided by \citet{BDX11} and, independently, \citet{BBT17} and \citet{LFN18}, who considered a ``Lipschitz-like'' gradient continuity condition for problems with singularities.%
\footnote{This condition was first examined by \citet{BDX11} in the context of Fisher markets.
The analysis of \citet{BBT17} and \citet{LFN18} is much more general, but several ideas are already present in \cite{BDX11}.}
At around the same time, \citet{Lu19} and \citet{Teb18} introduced a ``relative continuity'' condition which plays the same role for Lipschitz continuity.
Collectively, instead of using a global norm as a metric, these conditions employ a Bregman divergence as a measure of distance, and they replace gradient descent with \acli{MD} \citep{NY83,Bub15}.

In these extended problem classes, \emph{non-adaptive} \acl{MD} methods achieve
an $\bigoh(1/\sqrt{\nRuns})$ value convergence rate in relatively continuous problems \citep{Lu19,ABM20},
an $\bigoh(1/\nRuns)$ rate in relatively smooth problems \citep{BDX11,BBT17},
and
an $\bigoh(1/\sqrt{\nRuns})$ average rate if run with stochastic gradients in relatively continuous problems \citep{Lu19,ABM20}.
Importantly, the $\bigoh(1/\nRuns)$ rate for relatively smooth problems \emph{does not match} the $\bigoh(1/\nRuns^{2})$ rate for standard Lipschitz smooth problems:
in fact, even though \cite{HRX18} proposed a tentative path towards faster convergence in certain non-Lipschitz problems, \citet{DTAB19} recently established an $\Omega(1/\nRuns)$ lower bound for problems that are relatively-but-not-Lipschitz smooth.

\para{Our contributions}

Our aim in this paper is to provide an \emph{adaptive, parameter-agnostic} method that simultaneously achieves order-optimal rates in the above ``\acl{NL}'' framework.
By design, the proposed method \textendash\ which we call \acdef{AMD} \textendash\ has the following desirable properties:
\begin{enumerate}
[itemsep=0pt]
\item
When run with perfect gradients, the trajectory of queried points converges, and the method's rate of convergence in terms of function values interpolates between $\bigoh(1/\sqrt{\nRuns})$ and $\bigoh(1/\nRuns)$ for relatively continuous and relatively smooth problems respectively.
\item
When run with stochastic gradients, the method attains an $\bigoh(1/\sqrt{\nRuns})$ average rate of convergence.
\item
The method applies to both constrained and unconstrained problems, without requiring
a finite Bregman diameter
or
knowledge of a compact domain containing a solution.
\end{enumerate}

The enabling apparatus for these properties is an adaptive step-size policy in the spirit of \cite{DHS11,MS10,BL19}.
However, a major difference \textendash\ and technical difficulty \textendash\ is that the relevant definitions cannot be stated in terms of global norms, because the variation of non-Lipschitz function explodes at the boundary of the problem's domain (put differently, gradients may be unbounded even over bounded domains).
For this reason, our policy relies on the aggregation of a suitable sequence of ``Bregman residuals'' that stabilizes seamlessly when approaching a smooth solution, thus enabling the method to achieve faster convergence rates.

\para{Related work}

Beyond the references cited above, problems with singular objectives were treated in a recent series of papers in the context of online and stochastic optimization \cite{ABM20,ZPSH20};
however, the proposed methods are \emph{not} adaptive, and \emph{they do not interpolate} between different problem classes.


\begin{table}[tbp]
\centering
\footnotesize
\renewcommand{\arraystretch}{1.3}

\begin{tabular}{lcccccc}
\toprule
	&Constr. $/$ Uncon.
	&Stoch. (L)
	&Order-optimal
	&\acs{RC}
	&\acs{RS}
	&Stoch. (R)
	\\
\midrule
\adagrad\hfill\cite{DHS11}
	&\cmark $/$\cmark
	&\cmark
	&\xmark
	&\xmark
	&\xmark
	&\xmark
	\\
\accelegrad\;\;\hfill\cite{LYC18}
	&\xmark $/$ \cmark
	&\cmark
	&\cmark
	&\xmark
	&\xmark
	&\xmark
	\\
\unixgrad\hfill\cite{KLBC19}
	&\cmark $/$ \xmark
	&\cmark
	&\cmark
	&\xmark
	&\xmark
	&\xmark
	\\
\acs{UPGD}\hfill\cite{Nes15}
	&\cmark $/$ \cmark
	&\xmark
	&\cmark
	&\xmark
	&\xmark
	&\xmark
	\\
\acs{GMP}\hfill\cite{SGTP+19}
	&\cmark $/$ \cmark
	&\xmark
	&partial
	&\xmark
	&$1/\nRuns$
	&\xmark
	\\
\adaprox\hfill\cite{ABM21}
	&\cmark $/$ \cmark
	&\xmark
	&\cmark
	&partial
	&partial
	&\xmark
	\\
\method\hfill[ours]
	&\cmark $/$ \cmark
	&\cmark
	&\cmark
	&$1/\sqrt{\nRuns}$
	&$1/\nRuns$
	&$1/\sqrt{\nRuns}$
	\\
\bottomrule
\end{tabular}
\medskip
\caption{%
Overview of related work.
For the purposes of this table, (L) refers to ``Lipschitz'' and (R) to ``relative'' continuity or smoothness respectively.
``Order-optimal'' means that the algorithm attains the best rates for the worst instance in the class it was designed for (see cited papers for the details).
Logarithmic factors are ignored throughout;
we also note that the $\bigoh(1/\nRuns)$ rate in the RS column is, in general, unimprovable \cite{DTAB19}.}
\label{tab:related}
\end{table}


In the context of adaptive methods, the widely used \adagrad algorithm of \citef{DHS11} and \citef{MS10} was recently shown to interpolate between an $\bigoh(1/\sqrt{\nRuns})$ and $\bigoh(1/\nRuns)$ rate of convergence \citep{LYC18,LO19}.
More precisely, \citet{LO19} showed that a specific, ``one-lag-behind'' variant of \adagrad with prior knowledge of the smoothness modulus achieves an $\bigoh(1/\nRuns)$ rate in smooth, unconstrained problems;
concurrently, \citet{LYC18} obtained the same rate in a parameter-agnostic context.
In both cases, \adagrad achieves an $\bigoh(1/\sqrt{\nRuns})$ rate of convergence in stochastic problems (though with somewhat different assumptions for the randomness).

In terms of rate optimality for Lipschitz smooth problems, \adagrad is outperformed by \accelegrad \citep{LYC18} and \unixgrad \citep{KLBC19}:
these methods both achieve an $\bigoh(1/\nRuns^{2})$ rate in Lipschitz smooth problems, and an $\bigoh(1/\sqrt{\nRuns})$ in stochastic problems with bounded gradient noise.
By employing an efficient line-search step, the \acdef{UPGD} algorithm of \citef{Nes15} achieves order-optimal guarantees in the wider class of Hölder continuous problems (which includes the Lipschitz continuous and smooth cases as extreme cases);
however, \ac{UPGD} does not cover stochastic problems or problems with relatively continuous\,/\,smooth objectives.

As far as we are aware, the closest works to our own are
the \acdef{GMP} algorithm of \cite{SGTP+19}
and
the \adaprox method of \cite{ABM20},
both designed for variational inequality problems.
The \ac{GMP} algorithm can achieve interpolation between different classes of Hölder continuous problems and can adapt to the problem's relative smoothness modulus, but it does not otherwise interpolate between the relatively smooth and relatively continuous classes.
Moreover, \ac{GMP} requires knowledge of a ``domain of interest'' containing a solution of the problem;
in this regard, it is similar to \accelegrad \cite{LYC18} (though it does not require an extra projection step).
The recently proposed \adaprox method of \citef{ABM21} also achieves a similar interpolation result under a set of assumptions that are closely related \textendash\ but not equivalent \textendash\ to the relatively continuous/smooth setting of our paper.
In any case, neither of these papers covers the stochastic case;
to the best of our knowledge, \method is the first method in the literature capable of adaptiving to relatively continuous\,/\,smooth objectives, even under uncertainty.
For convenience, we detail these related works in \cref{tab:related} above.

\section{Problem setup and preliminaries}
\label{sec:setup}

\para{Problem statement}

Throughout the sequel, we will focus on convex minimization problems of the general form
\begin{equation}
\label{eq:opt}
\tag{Opt}
\begin{aligned}
\textrm{minimize}
	&\quad
	\obj(\point),
	\\
\textrm{subject to}
	&\quad
	\point \in \points.
\end{aligned}
\end{equation}
In the above,
$\points$ is a convex subset of a normed $\vdim$-dimensional space $\vecspace \cong \R^{\vdim}$,
and
$\obj\from\vecspace\to\R\cup\{\infty\}$ is a proper \ac{lsc} convex function with $\dom\obj = \setdef{\point\in\vecspace}{\obj(\point)<\infty} = \points$.
Compared to standard formulations, we stress that
\emph{$\points$ is not assumed to be compact, bounded, or even closed}.
This lack of closedness will be an important feature for our analysis because we are interested in objectives that may develop singularities at the boundary of their domain;
for a class of relevant examples of this type, see \cite{BDX11,BBT17,LFN18,BBDV09,BSTV18,ABM20,ZPSH20} and references therein.


To formalize our assumptions for \eqref{eq:opt}, we will write
\(
\subd\obj(\point)
\)
for the \emph{subdifferential} of $\obj$ at $\point$, and
\(
\subpoints
	\equiv \dom\subd\obj
	= \setdef{\point\in\points}{\subd\obj(\point) \neq \varnothing}
\)
for the \emph{domain of subdifferentiability} of $\obj$.
Formally, elements of $\subd\obj$ will be called subgradients, and we will treat them throughout as elements of the dual space $\dspace$ of $\vecspace$.
By standard results, we have $\relint\points \subseteq \subpoints \subseteq \points$, and any solution $\sol$ of \eqref{eq:opt} belongs to $\subpoints$ \cite[Chap.~26]{Roc70};
to avoid trivialities, we will make the following blanket assumption.

\begin{assumption}
\label{asm:sol}
The solution set $\sols \equiv \argmin\obj \subseteq \subpoints$ of \eqref{eq:opt} is nonempty.
\end{assumption}


Two further assumptions that are standard in the literature (but which we relax in the sequel) are:
\begin{enumerate}
[left=\parindent,itemsep=0pt]
\item
\acli{LC}\emph{:}
	there exists some $\gbound>0$ such that
\begin{align}
\label{eq:LC}
\tag{LC}
\abs{\obj(\pointalt) - \obj(\point)}
	\leq \gbound \norm{\pointalt - \point}
	&\quad
	\text{for all $\point,\pointalt \in \points$}.
\intertext{\item
\acli{LS}\emph{:}
	there exists some $\smooth>0$ such that}
\label{eq:LS}
\tag{LS}
\obj(\pointalt)
	\leq \obj(\point)
	+ \braket{\dvec}{\pointalt - \point}
	+ \frac{\smooth}{2} \norm{\pointalt - \point}^{2}
	&\quad
	\text{for all $\point,\pointalt \in \points$ and all $\dvec\in\subd\obj(\point)$}.
\end{align}
\end{enumerate}

\begin{remark*}
For posterity, we note that the smoothness requirement \eqref{eq:LS} \emph{does not} imply that $\subd\obj(\point)$ is a singleton.
The reason for this more general definition is that we want to concurrently treat problems with smooth and non-smooth objectives, and also feasible domains that are contained in lower-dimensional subspaces of $\vecspace$.%
\footnote{For example, the function $\obj\from\R^{2} \to \R$ with $\obj(\point_{1},0) = \point_{1}$ and $\obj(\point_{1},\point_{2}) = \infty$ for $\point_{2}\neq0$ is perfectly smooth on its domain ($\point_{2} = 0$);
however, $\subd\obj(\point_{1},0) = \setdef{(1,\dvec_{2})}{\dvec_{2}\in\R}$, and this set is never a singleton.}
We also note that we will be mainly interested in cases where the above requirements all \emph{fail} because $\obj$ and/or its derivatives blow up at the boundary of $\points$.
By this token, we will not treat \eqref{eq:LC}/\eqref{eq:LS} as ``blanket assumptions'';
we discuss this in detail in the sequel.
\end{remark*}

\para{The oracle model}

From an algorithmic point of view, we aim to solve \eqref{eq:opt} by using iterative methods that require access to a \acdef{SFO} \citep{Nes04}.
This means that, at each stage of the process, the optimizer can query a black-box mechanism that returns an estimate of the objective's gradient (or subgradient) at the queried point.
Formally, when called at $\point \in \points$, an \ac{SFO} is assumed to return a random (dual) vector $\signal(\point;\sample) \in \dspace$ where $\sample$ belongs to some (complete) probability space $(\samples, \filter,\mathbb{P})$. In practice, the oracle will be called repeatedly at a (possibly) random sequence of points $\state_{\run}\in \points$ generated by the algorithm under study.
Thus, once $\state_{\run}$ has been generated at stage $\run$, the oracle draws an i.i.d.
sample sample $\sample_{\run}\in \samples$ and returns the dual vector:
\begin{equation}
\label{eq:signal}
\signal_{\run}
	\equiv \signal(\state_{\run};\sample_{\run})
	= \nabla \obj(\state_{\run})+\noise_{\run}
\end{equation}
with $\noise_{\run}\in \dspace$ denoting the ``measurement error'' relative to some selection $\nabla\obj(\curr)$ of $\subd\obj(\curr)$.
In terms of measurability, we will write $\filter_{\run}$ for the history (natural filtration) of $\state_{\run}$;
in particular, $\state_{\run}$ is $\filter_{\run}$-adapted, but $\sample_{\run}$, $\signal_{\run}$ and $\noise_{\run}$ are not.
Finally, we will also make the statistical assumption that
\begin{equation}
\label{eq:SFO}
\tag{SFO}
\exof{\noise_{\run} \given \filter_{\run}}
	= 0
	\quad
	\text{and}
	\quad
\dnorm{\noise_{\run}}^{2}
	\leq \sdev^{2}
	\quad
	\text{for all $\run=\running$}
\end{equation}

This assumption is standard in the analysis of parameter-agnostic adaptive methods, \cf \cite{LYC18,KLBC19,WWB19,BL19} and references therein.
For concreteness, we will refer to the case $\sdev = 0$ as deterministic \textendash\ since, in that case, $\noise_{\run} = 0$ for all $\run$.
Otherwise, if $\liminf_{\run} \dnorm{\noise_{\run}} > 0$, the noise will be called \emph{persistent} and the model will be called \emph{stochastic}.

\section{Relative Lipschitz continuity and relative Lipschitz smoothness}
\label{sec:metric}

\subsection{Bregman functions}

We now proceed to describe a flexible template extending the standard Lipschitz continuity and Lipschitz smoothness conditions \textendash\ \eqref{eq:LC} and \eqref{eq:LS} \textendash\ to functions that are possibly singular at the boundary points of $\points$.
The main idea of this extension revolves around the \acdef{NL} framework that was first studied by \citet{BDX11} and then rediscovered independently by \citet{BBT17} and \citet{LFN18}.
The key notion in this setting is that of a suitable ``reference'' \emph{Bregman function} which provides a geometry-adapted measure of divergence on $\points$.
This is defined as follows:

\begin{definition}
\label{def:reg}
A convex \ac{lsc} function $\hreg\from\vecspace\to \R\cup\{\infty\}$ is a
\emph{Bregman function}
on $\points$, if
\begin{enumerate}
[itemsep=0pt,left=\parindent]
\item
$\dom \subd\hreg \subseteq \points \subseteq \dom \hreg$.
\item
The subdifferential of $\hreg$ admits a continuous selection $\nabla \hreg(\point)\in \subd\hreg(\point)$ for all $\point \in \dom \subd\hreg$.
\item
$\hreg$ is strongly convex, \ie there exists some $\hstr>0$ such that
\begin{equation}
\hreg(\pointalt)
	\geq \hreg(\point)
		+ \braket{\nabla \hreg(\point)}{\pointalt - \point}
		+ \frac{\hstr}{2} \norm{\pointalt - \point}^{2}
\end{equation}
for all $\point\in\dom\subd\hreg$, $\pointalt\in\dom\subd\hreg$.
\end{enumerate}
\noindent
The induced \emph{Bregman divergence} of $\hreg$ is then defined for all $\point \in \dom \subd\hreg$, $\pointalt \in \dom \hreg$ as
\begin{equation}
\breg(\pointalt,\point)
	= \hreg(\pointalt)-\hreg(\point)-\braket{\nabla \hreg(\point)}{\pointalt-\point}.
\end{equation}
\end{definition}
\acused{DGF}

\begin{remark*}
The notion of a Bregman function was first introduced by \citet{Bre67}.
Our definition follows \cite{Nes09,NJLS09,JNT11} and leads to the smoothest presentation, but there are variant definitions where $\hreg$ is not necessarily assumed strongly convex, \cf \cite{CL81,CT93,ABM20} and references therein.
\end{remark*}

%

Some standard examples of Bregman functions are as follows:
\begin{itemize}
[topsep=-2pt,left=\parindent]
\item
\textbf{Euclidean regularizer:}
Let $\points$ be a convex subset of $\R^{\vdim}$ endowed with the Euclidean norm $\norm{\cdot}_{2}$.
Then, the \emph{Euclidean regularizer} on $\points$ is defined as $\hreg(\point) = \norm{\point}_{2}^{2}/2$ and the induced Bregman divergence is the standard square distance $\breg(\pointalt,\point) = \norm{\pointalt - \point}_{2}^{2}$ for all $\point,\pointalt\in\points$

\item
\textbf{Entropic regularizer:}
Let $\points = \setdef{\point\in\R_{+}^{\vdim}}{\sum_{i=1}^{\vdim} \point_{i} = 1}$ be the unit simplex of $\R^{\vdim}$ endowed with the $L^{1}$-norm $\norm{\cdot}_{1}$.
Then, the \emph{entropic regularizer} on $\points$ is $\hreg(\point) = \sum_{i} \point_{i} \log\point_{i}$ and the induced divergence is the relative entropy $\breg(\pointalt,\point) = \sum_{i} \pointalt_{i} \log(\pointalt_{\play}/\point_{\play})$ for all $\pointalt\in\points$ $\point\in\relint\points$.

\item
\textbf{Log-barrier:}
Let $\points = \R_{++}^{\vdim}$ denote the (open) positive orthant of $\R^{\vdim}$.
Then, the \emph{log-barrier} on $\points$ is defined as $\hreg(\point) = -\sum_{i=1}^{\vdim} \log\point_{i}$ for all $\point\in\R_{++}^{\vdim}$.
The corresponding divergence is known as the \emph{Itakura-Saito divergence} and is given by $\breg(\point,\pointalt)=\sum_{i=1}^{\vdim} \parens{\point_i/\pointalt_i-\log(\point_{i}/\pointalt_{i}) - 1 }$ \citep{CT93}.
\end{itemize}


\subsection{\Acl{RC}}

With this background in hand, we proceed to discuss how to extend the Lipschitz regularity assumptions of \cref{sec:setup} to account for problems with singular objective functions.
We begin with the notion of \acdef{RC}, as introduced by \citef{Lu19} and extended further in a recent paper by \citef{ZPSH20}:

\begin{definition}
\label{def:RC}
A convex \ac{lsc} function $\obj\from\vecspace \to \R\cup\{\infty\}$
is said to be \emph{relatively continuous} if there exists some $\gbound >0$ such that
\begin{equation}
\tag{RC}
\label{eq:RC}
\obj(\point) - \obj(\pointalt)
	\leq \braket{\nabla \obj(\point)}{\point-\pointalt}
	\leq \gbound\sqrt{2\breg(\pointalt,\point)}
	\quad
	\text{for all $\point\in\dom\hreg, \pointalt\in\dom\subd\hreg$}.
\end{equation}
\end{definition}


In the literature, there have been several extensions of \eqref{eq:LC} to problems with singular objectives.
Below we discuss some of these variants and how they can be integrated in the setting of \cref{def:RC}.

\begin{example}
[$\mathrm{W}{[}\obj,\hreg{]}$ continuity]
\label{ex:Wfh}
This notion intends to single out sufficient conditions for the convergence of ``proximal-like'' methods like \acl{MD}.
Specifically, following \citef{Teb18}, $\obj$ is said to be $\mathrm{W}[\obj,\hreg]$-continuous relative to $\hreg$ on $\points$ (read: ``$\obj$ is weakly $\hreg$-continuous'') if there exists some $\gbound>0$ such that, for all $\run>0$, we have
\begin{equation}
\label{eq:weak}
\tag{W}
\run\braket{\nabla \obj(\point)}{\point-\pointalt}-\breg(\pointalt,\point)
	\leq \frac{\run^{2}}{2}\gbound^{2}
	\quad
	\text{for all $\pointalt \in \dom \hreg$, $\point \in \dom \subd\hreg$}.
\end{equation}
By rearranging the above quadratic polynomial in $t$, we note that its discriminant is $\Delta=\left[ \braket{\nabla \obj(\point)}{\point-\pointalt} \right]^{2} - 2\gbound^{2}\breg(\pointalt,\point)$, so it is immediate to check that \eqref{eq:RC} holds.
\end{example}
\medskip

\begin{example}
[Riemann\textendash Lipschitz continuity]
\label{ex:RLC}
Concurrently to the above, \citef{ABM20} introduced a Riemann-Lipschitz continuity condition extending \eqref{eq:LC} as follows.
Let $\norm{\cdot}_{\point}$ be a family of local norms on $\points$ (possibly induced by an appropriate Riemannian metric), and let $\norm{\dvec}_{\point,\ast}=\max_{\norm{\pointalt}_{\point}\leq 1}\braket{\dvec}{\pointalt}$ denote the corresponding dual norm.
Then, $\obj$ is \emph{Riemann\textendash Lipschitz continuous} relative to $\norm{\cdot}_{\point}$ if there exists some $\gbound >0$ such that:
\begin{equation}
\label{eq:RLC}
\tag{RLC}
\norm{\nabla \obj(\point)}_{\point,\ast}
	\leq \gbound
	\quad
	\text{for all $\point \in \points$}.
\end{equation} 
As we show in the paper's supplement,
\eqref{eq:RLC}$\implies$\eqref{eq:RC} so \eqref{eq:RC} is more general in this regard.
\end{example}

\subsection{\Acl{RS}}

As discussed above, the notion of \acdef{RS} was introduced by \cite{BDX11} and independently rediscovered by \cite{BBT17,LFN18}.
It is defined as follows:

\begin{definition}
\label{def:RS}
A convex \ac{lsc} function $\obj\from\vecspace \to \R\cup\{\infty\}$ is said to be \emph{relatively smooth} if there exists some $\smooth>0$ such that
\begin{equation}
\tag{RS}
\label{eq:RS}
\smooth \hreg - \obj
	\quad
	\text{is convex}.
\end{equation}
\end{definition}

The main motivation behind this elegant definition is the following variational characterizations:

\begin{proposition}
\label{prop:RS-prop}
The following statements are equivalent:
\begin{enumerate}[topsep=-2pt]
\setlength{\itemsep}{0pt}
\setlength{\parsep}{0pt}
\setlength{\parskip}{2pt}
\item
$\obj$ satisfies \eqref{eq:RS}.

\item
$\obj$ satisfies the inequality
\(
\obj(\point)
	\leq \obj(\pointalt)
		+ \braket{\nabla \obj(\pointalt)}{\point-\pointalt}
		+ \smooth \breg(\point,\pointalt),
\)

\item
$\obj$ satisfies the inequality
\(
\braket{\nabla \obj(\point)-\nabla \obj(\pointalt)}{\point-\pointalt}
	\leq \smooth\left[ \breg(\point,\pointalt)+\breg(\pointalt,\point)\right].
\)
\end{enumerate} 
\end{proposition}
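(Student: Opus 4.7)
The plan is to establish the cycle (1)$\Rightarrow$(2)$\Rightarrow$(3)$\Rightarrow$(1), using in an essential way the convexity\,/\,monotonicity duality for proper lower semi-continuous functions together with the ``symmetrization'' identity
\begin{equation*}
\breg(\point,\pointalt) + \breg(\pointalt,\point)
	= \braket{\nabla\hreg(\point) - \nabla\hreg(\pointalt)}{\point - \pointalt},
\end{equation*}
which follows by simply summing the defining expressions of $\breg(\point,\pointalt)$ and $\breg(\pointalt,\point)$ and observing that the $\hreg$-terms cancel.

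For (1)$\Rightarrow$(2), I would apply the subgradient inequality to the convex function $\smooth\hreg-\obj$ at the base point $\pointalt$ with test point $\point$, using the continuous selection $\smooth\nabla\hreg(\pointalt)-\nabla\obj(\pointalt)$ of its subdifferential. Rearranging this inequality and substituting the definition $\breg(\point,\pointalt)=\hreg(\point)-\hreg(\pointalt)-\braket{\nabla\hreg(\pointalt)}{\point-\pointalt}$ on the $\hreg$-terms yields exactly the descent-type bound of (2).

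For (2)$\Rightarrow$(3), I would write (2) twice, once at $(\point,\pointalt)$ and once with the roles of $\point$ and $\pointalt$ swapped, then add. The $\obj(\point)$ and $\obj(\pointalt)$ terms cancel and the gradient terms combine into $-\braket{\nabla\obj(\point)-\nabla\obj(\pointalt)}{\point-\pointalt}$, leaving precisely the inequality in (3).

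For (3)$\Rightarrow$(1), I would use the symmetrization identity above to rewrite (3) as
\begin{equation*}
\braket{\bigl[\smooth\nabla\hreg(\point)-\nabla\obj(\point)\bigr] - \bigl[\smooth\nabla\hreg(\pointalt)-\nabla\obj(\pointalt)\bigr]}{\point-\pointalt} \geq 0,
\end{equation*}
which expresses monotonicity of a selection from $\subd(\smooth\hreg-\obj)$. Since monotonicity of the subdifferential is equivalent to convexity for proper \ac{lsc} functions on a convex domain, we recover (1). The main technical obstacle is bookkeeping on domains: the selections $\nabla\hreg$ and $\nabla\obj$ must be simultaneously available, so in each step one has to check that the asserted inequality holds on the relevant intersection $\dom\subd\hreg \cap \dom\subd\obj$ (and then extend by lower semi-continuity to $\dom\hreg$ where needed, as in the statement of (2)). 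Once this is done cleanly, the three implications reduce to essentially one-line manipulations.
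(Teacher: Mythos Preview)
The paper does not actually prove \cref{prop:RS-prop}: immediately after the statement it writes ``A close variant of \cref{prop:RS-prop} appears in \cite{BDX11,BBT17,LFN18}, so we do not prove it here.'' Your cyclic argument (1)$\Rightarrow$(2)$\Rightarrow$(3)$\Rightarrow$(1) is precisely the standard route taken in those references (see in particular \cite{BBT17}, Lemma~1 and \cite{LFN18}, Proposition~1.1), so there is nothing to compare against in the paper itself.

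One small caveat on your (3)$\Rightarrow$(1) step: you invoke ``monotonicity of the subdifferential is equivalent to convexity for proper \ac{lsc} functions,'' but what you actually obtain from (3) is monotonicity of a single \emph{selection} $\smooth\nabla\hreg-\nabla\obj$, not of the full multivalued operator $\subd(\smooth\hreg-\obj)$. This is harmless here because $\hreg$ and $\obj$ are both assumed convex, so $\smooth\hreg-\obj$ is differentiable on $\relint\points$ (being a difference of functions differentiable there), and for a function differentiable on an open convex set, monotonicity of the gradient is equivalent to convexity; one then extends to the closure by lower semi-continuity. You flag exactly this bookkeeping as the ``main technical obstacle,'' so you are aware of it, but it is worth stating the resolution explicitly rather than appealing to the multivalued equivalence.
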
 

A close variant of \cref{prop:RS-prop} appears in \cite{BDX11,BBT17,LFN18}, so we do not prove it here.
Instead, we discuss below a different extension of \eqref{eq:LS} that turns out to be a special case of \eqref{eq:RS}.
\smallskip


\begin{example}
[Metric smoothness]
\label{ex:MS}
Similar in spirit to \eqref{eq:RLC}, \citef{ABM21} introduced an extension of \eqref{eq:LS} that replaces the global norm $\norm{\cdot}$ with a local norm $\norm{\cdot}_{\point}$, $\point\in\subpoints$.
In particular, given such a norm, we say that $\obj$ is \emph{metrically smooth} (relative to $\norm{\cdot}_{\point}$) if
\begin{equation}
\label{eq:MS}
\tag{MS}
\norm{\nabla \obj(\point)-\nabla \obj(\pointalt)}_{\point,\ast}
	\leq \smooth \norm{\point-\pointalt}_{\pointalt}
	\quad
	\text{for all $\point, \pointalt \in \dom\subd\obj$}.
\end{equation}
An observation that seems to have been overlooked by \cite{ABM21} is that \eqref{eq:MS}$\implies$\eqref{eq:RS}, so \eqref{eq:RS} is more general.
We prove this observation in the appendix.
\end{example}

\subsection{More examples}

Some concrete examples of optimization problems satisfying \eqref{eq:RC}, \eqref{eq:RS} or both (but not their Euclidean counterparts) are Fisher markets \citep{Shm09,BDX11}, Poisson inverse problems \citep{BBT17,ABM20}, support vector machines \citep{Lu19,ZPSH20}, $D$-design \citep{BSTV18,LFN18}, etc.
Because of space constraints, we do not detail these examples here;
however, we provide an in-depth presentation of a Fisher market model in the appendix, along with a series of numerical experiments used to validate the analysis to come.

\section{\Acl{AMD}}
\label{sec:adamir}

We are now in a position to define the proposed \acdef{AMD} method.
In abstract recursive form, \ac{AMD} follows the basic \acl{MD} template
\begin{equation}
\label{eq:MD}
\tag{MD}
\new
	= \prox{\point}{-\step\gvec},
\end{equation}
where 
$\proxmap$ is a generalized Bregman proximal operator induced by $\hreg$ (see below for the detailed definition),
$\gvec$ is a search direction determined by a (sub)gradient of $\obj$ at $\point$,
and
$\step>0$ is a step-size parameter.
We discuss these elements in detail below, starting with the prox-mapping $\proxmap$.

\subsection{The prox-mapping}

Given a Bregman function $\hreg$, its induced \emph{prox-mapping} is defined as
\begin{equation}
\label{eq:prox1}
\prox{\point}{\dvec}
	=\argmin\nolimits_{\pointalt \in \points}\{\braket{\dvec}{\point-\pointalt}
		+ \breg(\pointalt,\point)\}
	\quad
	\text{for all $\point\in\dom\subd\hreg$, $\dvec\in\dspace$},
\end{equation}
where $\breg(\pointalt,\point)$ denotes the Bregman divergence of $\hreg$.
Of course, in order for \eqref{eq:prox1} to be well-defined, the $\argmin$ must be attained in $\dom\subd\hreg$.
Indeed, we have:

\begin{proposition}
\label{prop:well-posed}
The recursion \eqref{eq:MD} satisfies $\new \in \dom\subd\hreg$ for all $\point\in\dom\subd\hreg$ and all $\signal\in\dspace$.
\end{proposition}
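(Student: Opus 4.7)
The plan is to prove that the prox-mapping is well-defined by showing that, for every $\point \in \dom\subd\hreg$ and every $\dvec \in \dspace$, the minimization problem in \eqref{eq:prox1} admits a unique solution which automatically lies in $\dom\subd\hreg$. Since the recursion \eqref{eq:MD} is precisely $\new = \prox{\point}{-\step\gvec}$, this will yield the stated inclusion.

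First, I would discard the constants in $\pointalt$ (namely $\braket{\dvec}{\point}$, $\hreg(\point)$ and $\braket{\nabla\hreg(\point)}{\point}$) and observe that minimizing $\braket{\dvec}{\point - \pointalt} + \breg(\pointalt, \point)$ over $\points$ is equivalent to minimizing
\[
\psi(\pointalt) = \hreg(\pointalt) - \braket{\nabla\hreg(\point) - \dvec}{\pointalt}.
\]
Extending $\psi$ by $+\infty$ outside $\dom\hreg \supseteq \points$, the function $\psi\colon\vecspace \to \R\cup\{+\infty\}$ is proper and \ac{lsc} (inherited from $\hreg$) and $\hstr$-strongly convex (from the strong convexity of $\hreg$). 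Strong convexity plus lower semi-continuity then yields coercivity, so $\psi$ admits a unique global minimizer $\new \in \vecspace$.

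The key step is to identify this unconstrained minimizer with the sought prox-value. Applying Fermat's rule at $\new$, and invoking the sum rule for subdifferentials (the linear summand of $\psi$ is continuous on $\vecspace$, so no constraint qualification is needed), we obtain
\[
0 \in \subd\psi(\new) = \subd\hreg(\new) - \bigl(\nabla\hreg(\point) - \dvec\bigr),
\]
i.e., $\nabla\hreg(\point) - \dvec \in \subd\hreg(\new)$. In particular $\subd\hreg(\new) \neq \varnothing$, so $\new \in \dom\subd\hreg$. By the inclusion $\dom\subd\hreg \subseteq \points$ from \cref{def:reg}, we further have $\new \in \points$; hence the unconstrained minimizer of $\psi$ over $\vecspace$ is a fortiori the constrained minimizer over $\points$, and therefore coincides with $\prox{\point}{\dvec}$, as claimed.

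The only mild subtlety worth flagging is that the strong-convexity inequality in \cref{def:reg} is stated for $\point, \pointalt \in \dom\subd\hreg$ rather than on all of $\dom\hreg$; this is harmless because it extends to $\dom\hreg$ by the lower semi-continuity of $\hreg$ together with the fact that $\dom\subd\hreg$ is dense in $\dom\hreg$ for any proper convex \ac{lsc} function. Modulo this routine extension, the argument above delivers existence, uniqueness, and the crucial membership $\new \in \dom\subd\hreg$ in one stroke.
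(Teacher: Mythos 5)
Your argument is correct and is essentially the paper's own proof in different clothing: the paper packages the identical reasoning into \cref{lem:Bregman} via the mirror map $\mirror(\dpoint)=\argmax_{\point\in\dom\hreg}\{\braket{\dpoint}{\point}-\hreg(\point)\}$ (strong convexity gives coercivity, hence existence and uniqueness of the maximizer, and first-order optimality forces $\subd\hreg(\new)\neq\varnothing$), which is exactly your Fermat-rule step applied to $\psi$. The only blemish is a sign slip — expanding $\braket{\dvec}{\point-\pointalt}+\breg(\pointalt,\point)$ gives $\psi(\pointalt)=\hreg(\pointalt)-\braket{\nabla\hreg(\point)+\dvec}{\pointalt}$, so the optimality condition reads $\nabla\hreg(\point)+\dvec\in\subd\hreg(\new)$ — but this is immaterial for the conclusion since $\dvec$ ranges over all of $\dspace$.
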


To streamline our discussion, we postpone the proof of \cref{prop:well-posed} to \cref{app:Bregman}.
For now, we only note that it implies that the abstract recursion \eqref{eq:MD} is \emph{well-posed}, \ie it can be iterated for all $\run=\running$ to generate a sequence $\state_{\run}\in\points$.

\subsection{The method's step-size}

The next important element of \eqref{eq:MD} is the method's step-size.
In the unconstrained case, a popular adaptive choice is the so-called ``inverse-sum-of-squares'' policy
\begin{equation}
\label{eq:adagrad}
\txs
\step_{\run}
	= 1 \Big/ \sqrt{\sum_{\runalt=\start}^{\run} \dnorm{\nabla\obj(\state_{\runalt})}^{2}},
\end{equation}
where $\state_{\run}$ is the series of iterates produced by the algorithm.
However, in relatively continuous/smooth problems, this definition encounters two crucial issues.
First, because the gradient of $\obj$ is unbounded (even over a bounded domain), the denominator of \eqref{eq:adagrad} may grow at an uncontrollable rate, leading to a step-size policy that vanishes too fast to be of any practical use.
The second is that, if the problem is constrained, the extra terms entering the denominator of $\step_{\run}$ do not vanish as the algorithm approaches a solution, so the \eqref{eq:adagrad} may still be unable to exploit the smoothness of the objective.

We begin by addressing the second issue.
In the Euclidean case, the key observation is that
the difference $\norm{\new - \point}$ must always vanish near a solution (even near the boundary), so we can use it as a proxy for $\nabla \obj(\point)$ in constrained problems.
This idea is formalized by the notion of the \emph{gradient mapping} \cite{Nes04} that can be defined here as
\begin{equation}
\label{eq:grad-map}
\res
	= \norm{\new - \point} \big/ \step.
\end{equation}

On the other hand, in a Bregman setting, the prox-mapping tends to deflate gradient steps, so the norm difference between two successive iterates $\new$ and $\point$ of \eqref{eq:MD} could be very small relative to the oracle signal that was used to generate the update.
As a result, the Euclidean residual \eqref{eq:grad-map} could lead to a disproportionately large step-size that would be harmful for convergence.
For this reason, we consider a gradient mapping that takes into account the Bregman geometry of the method and we set
\begin{equation}
\label{eq:res-Breg}
\res
	= \sqrt{\breg(\point,\new) + \breg(\new,\point)} \big/ \step.
\end{equation}
Obviously, when $\hreg(\point) = (1/2) \twonorm{\point}^{2}$,
we readily recover the definition of the Euclidean gradient mapping \eqref{eq:grad-map}.
In general however, by the strong convexity of $\hreg$, the value of this ``Bregman residual'' exceeds the corresponding Euclidean definition, so the induced step-size exhibits smoother variations that are more adapted to the framework in hand.

\subsection{The \method algorithm}

We are finally in a position to put everything together and define the \acdef{AMD} method.
In this regard, combining the abstract template \eqref{eq:MD} with the Bregman residual and ``inverse-sum-of-squares'' approach discussed above, we will consider the recursive policy
\begin{equation}
\state_{\run+1}
	= \prox{\state_{\run}}{-\step_{\run}\signal_{\run}}
\end{equation}
with $\signal_{\run}$, $\run=\running$, coming from a generic oracle model of the form \eqref{eq:SFO}, and 
with $\step_{\run}$ defined as
\begin{equation}
\label{eq:ada-step}
\tag{\method}
\step_{\run}
	= \frac{1}{\sqrt{\sum_{\runalt=\prestart}^{\run-1}\res^{2}_{\runalt}}}
	\qquad
\text{with}\;\; \res_{\runalt}^{2}=\frac{\breg(\state_{\runalt},\state_{\runalt+1}) + \breg(\state_{\runalt+1},\state_{\runalt})}{\step_{\runalt}^{2}}.
\end{equation}

In the sequel, we will use the term ``\method'' to refer interchangeably to the update $\state_{\run} \gets \state_{\run+1}$ and the specific step-size policy used within.
The convergence properties of \method are discussed in detail in the next two sections (in both deterministic and stochastic problems);
in the supplement, we also perform a numerical validation of the method in the context of a Fisher market model.


%
%

\section{Deterministic analysis and results}
\label{sec:deterministic}

We are now in a position to state our main convergence results for \method.
We begin with the deterministic analysis ($\sdev=0$), treating both the method's ``time-average'' as well as the induced trajectory of query points;
the analysis for the stochastic case ($\sdev > 0$) is presented in the next section.

%

\subsection{Ergodic convergence and rate interpolation}

We begin with the convergence of the method's ``time-averaged'' state, \ie $\avg = (1/\nRuns) \sum_{\run=\start}^{\nRuns} \state_{\run}$.

\begin{theorem}
\label{thm:rates-det}
Let $\state_{\run}$, $\run=\running$, denote the sequence of iterates generated by \method, and let $\iBreg = \breg(\sol,\state_{\start})$.
Then, \method simultaneously enjoys the following guarantees:
\begin{enumerate}
[left=\parindent]
\item
If $\obj$ satisfies \eqref{eq:RC}, we have:
\begin{equation}
\obj(\avg) - \min\obj
	\leq
		\frac
		{\sqrt{2} \gbound \bracks*{\iBreg + 8\gbound^{2}/\res_{0}^{2} + 2\log(1+2\gbound^{2}\nRuns/\res_{0}^{2})} }
		{\sqrt{\nRuns}}
		+ \frac{3\sqrt{2}\gbound + 4\gbound^{2}/\res_{0}^{2}}{\nRuns}.
\end{equation}

\item
If  $\obj$ satisfies \eqref{eq:RS}, we have
\(
\obj(\avg) - \min\obj
	= \bigoh\parens*{\iBreg / \nRuns}.
\)

\item
If  $\obj$ satisfies \eqref{eq:RS} and \eqref{eq:RC}, we have:
\begin{equation}
\obj(\avg) - \min\obj
	\leq
		\bracks*{
			\obj(\state_1)-\min \obj
			+ \parens*{
					2
					+ \frac{8\gbound^{2}}{\res_{\prestart}^{2}}
					+ 2\log\frac{4\smooth^{2}}{\res_{\prestart}^{2}}
				} \smooth 
			 }^{2} 
 		\frac{\iBreg}{\nRuns}.
	\end{equation}
\end{enumerate}
\end{theorem}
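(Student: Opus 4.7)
The plan is to derive all three bounds from a single common recursion and then specialize to each regularity regime. Throughout, I will use the shorthand $S_{\run} = \sum_{\runalt=\prestart}^{\run-1}\res_{\runalt}^{2}$, so that $\step_{\run} = 1/\sqrt{S_{\run}}$ and, by the definition of $\res_{\runalt}$, $\step_{\run}^{2}\res_{\run}^{2} = \breg(\state_{\run},\state_{\run+1}) + \breg(\state_{\run+1},\state_{\run})$. My starting point is the standard three-point inequality for the prox step $\state_{\run+1} = \prox{\state_{\run}}{-\step_{\run}\signal_{\run}}$: for every $\pointalt\in\points$,
\[
\step_{\run}\braket{\signal_{\run}}{\state_{\run+1}-\pointalt} \leq \breg(\pointalt,\state_{\run}) - \breg(\pointalt,\state_{\run+1}) - \breg(\state_{\run+1},\state_{\run}).
\]
Taking $\pointalt = \sol$, decomposing $\state_{\run+1}-\sol = (\state_{\run+1}-\state_{\run}) + (\state_{\run}-\sol)$, and using convexity of $\obj$ in the deterministic case ($\signal_{\run}=\nabla\obj(\state_{\run})$) yields the master recursion
\[
\step_{\run}[\obj(\state_{\run})-\obj(\sol)] \leq \breg(\sol,\state_{\run}) - \breg(\sol,\state_{\run+1}) + \step_{\run}\braket{\signal_{\run}}{\state_{\run}-\state_{\run+1}} - \breg(\state_{\run+1},\state_{\run}).
\]
Crucially, the first-order optimality of the prox step gives $\step_{\run}\braket{\signal_{\run}}{\state_{\run}-\state_{\run+1}} = \step_{\run}^{2}\res_{\run}^{2}$ in the interior case (an inequality of the same sign in the constrained case), so the residual bracket in the master recursion is already bounded by $\breg(\state_{\run},\state_{\run+1}) \leq \step_{\run}^{2}\res_{\run}^{2} = \res_{\run}^{2}/S_{\run}$ for free.

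For Part~(1), I would combine this identity with \eqref{eq:RC} \textendash{} which bounds the same quantity above by $\sqrt{2}\step_{\run}^{2}\gbound\res_{\run}$ \textendash{} to obtain the a priori estimate $\res_{\run}\le\sqrt{2}\gbound$ uniformly in $\run$. Summing the master recursion then yields
\[
\sum_{\run=\start}^{\nRuns}\step_{\run}[\obj(\state_{\run})-\obj(\sol)] \leq \iBreg + \sum_{\run=\start}^{\nRuns}\frac{\res_{\run}^{2}}{S_{\run}},
\]
and an \adagrad-style dyadic analysis of the last sum \textendash{} splitting into ``regular'' iterations where $\res_{\run}^{2}\le S_{\run}$ (so that $\res_{\run}^{2}/S_{\run}\le 2\log(S_{\run+1}/S_{\run})$ via the inequality $x/(1+x)\le\log(1+x)$, and the sum telescopes to $2\log(1+2\gbound^{2}\nRuns/\res_{\prestart}^{2})$), and the bounded number of ``boundary'' iterations with $\res_{\run}^{2}>S_{\run}$, each contributing at most $2\gbound^{2}/\res_{\prestart}^{2}$ \textendash{} controls it by the exact expression that appears inside the first bracket of the claim. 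Using Jensen's inequality together with the monotonicity of $\step_{\run}$ to convert $\sum\step_{\run}[\obj(\state_{\run})-\obj(\sol)]$ into $\nRuns\step_{\nRuns+1}[\obj(\avg)-\obj(\sol)]$, and plugging in the estimate $1/\step_{\nRuns+1}\leq \res_{\prestart} + \sqrt{2}\gbound\sqrt{\nRuns}$, produces the two-scale rate.

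For Parts~(2) and~(3) I would instead feed \eqref{eq:RS} into a function-value descent. The prox identity $\braket{\signal_{\run}}{\state_{\run+1}-\state_{\run}} = -\step_{\run}\res_{\run}^{2}$ and the RS descent inequality give $\obj(\state_{\run+1}) \leq \obj(\state_{\run}) - \step_{\run}\res_{\run}^{2}(1-\smooth\step_{\run})$, which becomes a genuine $1/2$-descent as soon as $\step_{\run}\smooth\leq 1/2$, i.e.\ from the switching index $\run^{*}$ at which $S_{\run^{*}}\geq 4\smooth^{2}$. Summing from $\run^{*}$ and invoking the integral comparison $\res_{\run}^{2}/\sqrt{S_{\run}} \geq 2(\sqrt{S_{\run+1}}-\sqrt{S_{\run}})$ turns the descent into the absolute bound
\[
\sqrt{S_{\nRuns+1}} \leq \sqrt{S_{\run^{*}}} + [\obj(\state_{\run^{*}})-\obj(\sol)],
\]
so that $1/\step_{\nRuns+1}$ is bounded by a constant independent of $\nRuns$. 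Re-running the master recursion in the RS variant (with $\state_{\run+1}$ replacing $\state_{\run}$ on the left-hand side so that the residual bracket acquires the favorable coefficient $\step_{\run}\smooth-1\le 0$) yields $\sum\step_{\run}[\obj(\state_{\run+1})-\obj(\sol)] \leq \iBreg$, giving the $\iBreg/\nRuns$ rate of Part~(2). For Part~(3), \eqref{eq:RC} quantitatively bounds $\res_{\run}\le\sqrt{2}\gbound$ and therefore forces $\run^{*} \leq \lceil 2\smooth^{2}/\gbound^{2}\rceil$, so the constants in the above absolute bound inherit the $8\gbound^{2}/\res_{\prestart}^{2}$ and $2\log(4\smooth^{2}/\res_{\prestart}^{2})$ contributions from the Part~(1) residual analysis. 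Squaring the resulting estimate for $1/\step_{\nRuns+1}$ matches the stated squared bracket multiplying $\iBreg/\nRuns$.

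The main technical obstacle will be the bookkeeping for the ``boundary phase'' $\run\le\run^{*}$ in Parts~(2) and~(3): before the switch, the energy estimate is not a descent, so one must separately dispose of the finitely many iterations with $\step_{\run}\smooth > 1/2$ and show that their cumulative contribution to $\sqrt{S_{\nRuns+1}}$ is controlled by precisely the constants appearing inside the squared bracket. A secondary subtlety is the constrained case, where the prox identity is only an inequality modulo a normal-cone term; fortunately, that inequality points in the favorable direction for both the a priori $\res$-bound under \eqref{eq:RC} and the descent estimate under \eqref{eq:RS}, so it does not affect the final rates.
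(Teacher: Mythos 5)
Your proposal is correct in substance and reaches the stated rates through the same key ingredients as the paper \textendash\ the three\textendash point/prox inequality of \cref{prop:increment}, the uniform residual bound $\res_{\run}^{2}\leq 2\gbound^{2}$ under \eqref{eq:RC} (\cref{lem:bounded-Breg}), the descent\textendash based stabilization of the step-size under \eqref{eq:RS} (\cref{lem:smooth-Breg}), and the logarithmic sum estimates of \cref{lem:logarithmic-3} \textendash\ but the top-level decomposition is genuinely different. You work with the \emph{weighted} regret $\sum_{\run}\step_{\run}[\obj(\state_{\run})-\obj(\sol)]$, telescope it against $\iBreg+\sum_{\run}\step_{\run}^{2}\res_{\run}^{2}$, and then convert to the ergodic bound by pulling out the monotone factor $\step_{\nRuns}$ and lower-bounding it. The paper instead proves an \emph{unweighted} regret bound (\cref{prop:reg-adamir}) via Abel summation over the varying coefficients $1/\step_{\run}$, which requires the auxiliary iterate-boundedness estimate of \cref{lem:bound-2}. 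Your route is leaner (it bypasses \cref{lem:bound-2} and the summation by parts entirely, and the term $\step_{\nRuns}^{-1}\sum_{\run}\step_{\run}^{2}\res_{\run}^{2}$ never appears), and it is in fact exactly the ``boxing'' strategy the paper itself deploys in the stochastic analysis of \cref{thm:stoch}; what the paper's version buys in exchange is a genuine anytime regret bound \eqref{eq:reg-adamir} rather than only the averaged value gap. Up to the usual bookkeeping of constants (which the paper's own statement does not match its proof on exactly), both routes deliver the three claimed rates.

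Two local slips should be repaired, though neither is structural. First, in the constrained case the variational inequality for the prox step gives $\braket{\nabla\hreg(\state_{\run})-\nabla\hreg(\state_{\run+1})}{\state_{\run}-\state_{\run+1}}\leq\step_{\run}\braket{\signal_{\run}}{\state_{\run}-\state_{\run+1}}$, so the term $\step_{\run}\braket{\signal_{\run}}{\state_{\run}-\state_{\run+1}}$ is bounded \emph{below}, not above, by $\step_{\run}^{2}\res_{\run}^{2}$; the correct way to obtain your master recursion on the boundary is to pair directly against $\state_{\run}-\sol$ via the second inequality of \cref{prop:increment}, which yields $\step_{\run}\braket{\signal_{\run}}{\state_{\run}-\sol}\leq\breg(\sol,\state_{\run})-\breg(\sol,\state_{\run+1})+\breg(\state_{\run},\state_{\run+1})$ and hence the same conclusion. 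Second, in Part~(3) the bound $\run^{*}\leq\lceil 2\smooth^{2}/\gbound^{2}\rceil$ has the inequality backwards: \eqref{eq:RC} upper-bounds the residuals, hence upper-bounds the growth of $S_{\run}$, so it can only \emph{lower}-bound the switching index. Fortunately no bound on $\run^{*}$ itself is needed: the quantity you must control is $\sum_{\run\leq\run^{*}}\step_{\run}^{2}\res_{\run}^{2}$, which \cref{lem:logarithmic-3} bounds by $2+8\gbound^{2}/\res_{\prestart}^{2}+2\log(S_{\run^{*}}/\res_{\prestart}^{2})\leq 2+8\gbound^{2}/\res_{\prestart}^{2}+2\log(4\smooth^{2}/\res_{\prestart}^{2})$ using only $S_{\run^{*}}\leq 4\smooth^{2}$ \textendash\ exactly the constants you quote, and exactly how the paper's Case~3 proceeds.
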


\Cref{thm:rates-det} shows that, up to logarithmic factors, \method achieves the min-max optimal bounds for functions in the $\ref{eq:RC} \cup \ref{eq:RS}$ oracle complexity class.%
\footnote{We recall here that, in contrast to \eqref{eq:LS}, the $\bigoh(1/\nRuns)$ rate is optimal in \eqref{eq:RS}, \cf \citef{DTAB19}.}
The key element of the proof (which we detail in \cref{app:adamir}), is the following regret bound:

\begin{proposition}
\label{prop:reg-adamir}
With notation as in \cref{thm:rates-det}, \method enjoys the regret bound
\begin{equation}
\label{eq:reg-adamir}
\sum_{\run=\start}^{\nRuns} \bracks{\obj(\curr) - \obj(\sol)}
	\leq \frac{\iBreg}{\last[\step]}
		+ \frac{\sum_{\run=\start}^{\nRuns} \curr[\step]^{2} \curr[\res]^{2}}{\last[\step]}
		+ \sum_{\run=\start}^{\nRuns} \curr[\step]\curr[\res]^{2}.
\end{equation}
\end{proposition}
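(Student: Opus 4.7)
The plan is to derive a per-iteration estimate of the form
\[
\obj(\curr) - \obj(\sol) \leq \frac{\breg(\sol,\curr) - \breg(\sol,\next)}{\curr[\step]} + \curr[\step]\curr[\res]^{2},
\]
then sum, bound $\breg(\sol,\curr)$ uniformly in $\run$, and telescope by Abel summation. The monotonicity $\curr[\step] \geq \next[\step]$ hardwired into \eqref{eq:ada-step} will make the summation-by-parts collapse to a clean factor of $1/\last[\step]$.

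To produce the per-iteration estimate I would exploit the first-order optimality of the prox. By \cref{prop:well-posed}, every iterate $\next$ lies in $\dom\subd\hreg$, where $\nabla\hreg$ is single-valued; because the Bregman regularizer is ``steep'' at the boundary of $\points$, the prox-minimum sits in the relative interior of $\points$, and the variational characterization of the $\argmin$ in \eqref{eq:prox1} sharpens to
\[
\braket{\curr[\step]\curr[\signal] + \nabla\hreg(\next) - \nabla\hreg(\curr)}{\pointalt - \next} = 0
\qquad\text{for every } \pointalt\in\aff\points.
\]
Specializing $\pointalt=\curr$ and using $\braket{\nabla\hreg(a)-\nabla\hreg(b)}{a-b} = \breg(a,b)+\breg(b,a)$ gives $\curr[\step]\braket{\curr[\signal]}{\curr-\next} = \curr[\step]^{2}\curr[\res]^{2}$; specializing $\pointalt=\sol$ and using the three-point identity for Bregman divergences gives $\curr[\step]\braket{\curr[\signal]}{\next-\sol} = \breg(\sol,\curr) - \breg(\sol,\next) - \breg(\next,\curr)$. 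Summing these, invoking convexity $\obj(\curr) - \obj(\sol) \leq \braket{\curr[\signal]}{\curr-\sol}$ (valid since $\curr[\signal] = \nabla\obj(\curr)$ in the deterministic regime of \cref{thm:rates-det}), discarding the non-positive $-\breg(\next,\curr)$, and dividing by $\curr[\step]$ yields the per-iteration estimate above.

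Because $\obj(\curr) \geq \obj(\sol)$, that same inequality rearranges to $\breg(\sol,\next) \leq \breg(\sol,\curr) + \curr[\step]^{2}\curr[\res]^{2}$, so induction from $\run=\start$ produces the uniform bound $\breg(\sol,\curr) \leq \iBreg + \sum_{\runalt=\start}^{\nRuns}\iter[\step]^{2}\iter[\res]^{2} \eqqcolon C$ for every $\run\in\{\start,\dots,\nRuns+1\}$. Abel summation (with the convention $1/\step_{\prestart}\coloneqq 0$) then delivers
\[
\sum_{\run=\start}^{\nRuns}\frac{\breg(\sol,\curr) - \breg(\sol,\next)}{\curr[\step]} = \sum_{\run=\start}^{\nRuns}\breg(\sol,\curr)\left[\frac{1}{\curr[\step]} - \frac{1}{\prev[\step]}\right] - \frac{\breg(\sol,\state_{\nRuns+1})}{\last[\step]} \leq \frac{C}{\last[\step]},
\]
using monotonicity of $1/\curr[\step]$, the telescope $\sum_{\run}\bracks{1/\curr[\step] - 1/\prev[\step]} = 1/\last[\step]$, and non-negativity of Bregman divergences to drop the boundary term; substituting $C$ and adding the residual $\sum_\run\curr[\step]\curr[\res]^{2}$ from the per-iteration bound then reproduces the stated inequality.

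The main obstacle is the variational \emph{equality} in the second step: for it to hold, any normal-cone contribution in the prox's first-order condition must vanish against directions in $\aff\points$. This is exactly what the essential-smoothness condition $\dom\subd\hreg\subseteq\points$ in \cref{def:reg} buys us -- iterates never reach the relative boundary of $\points$, so any active-constraint normals are orthogonal to $\aff\points$ and drop out of the inner products we evaluate. Beyond this point everything is standard: three-point Bregman identities and an Abel summation whose form is dictated entirely by the non-increasing step-size hardwired into \eqref{eq:ada-step}.
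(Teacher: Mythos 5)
Your proposal is correct and follows essentially the same route as the paper: the per-iteration estimate $\obj(\curr)-\obj(\sol) \leq \bracks{\breg(\sol,\curr)-\breg(\sol,\next)}/\curr[\step] + \curr[\step]\curr[\res]^{2}$, the uniform bound $\breg(\sol,\curr) \leq \iBreg + \sum_{\runalt}\iter[\step]^{2}\iter[\res]^{2}$ (this is exactly \cref{lem:bound-2}), and an Abel summation against the non-increasing step-size are the paper's three ingredients in the same order. One caveat on your ``main obstacle'': \cref{def:reg} does \emph{not} guarantee steepness (the Euclidean regularizer on a closed set is an admitted example), so the prox optimality condition is in general only the inequality $\braket{\curr[\step]\curr[\signal]+\nabla\hreg(\next)-\nabla\hreg(\curr)}{\pointalt-\next}\geq 0$ for $\pointalt\in\points$; testing at $\pointalt=\sol$ still yields the upper bound you need on $\braket{\curr[\signal]}{\next-\sol}$, but testing at $\pointalt=\curr$ then gives $\curr[\step]\braket{\curr[\signal]}{\curr-\next}\geq\curr[\step]^{2}\curr[\res]^{2}$ \textendash\ the wrong direction for your decomposition, so the equality (\ie orthogonality of the residual subgradient to $\aff\points$) is genuinely needed there. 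Since the paper's own first display in this proof makes the equivalent leap, this is a shared implicit hypothesis rather than a defect specific to your argument, but your claim that $\dom\subd\hreg\subseteq\points$ supplies it overstates what \cref{def:reg} provides.
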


The proof of \cref{prop:reg-adamir} hinges on the specific definition of \method's step-size, and the exact functional form of the regret bound \eqref{eq:reg-adamir} plays a crucial role in the sequel.
Specifically, under the regularity conditions \eqref{eq:RC} and \eqref{eq:RS}, we respectively obtain the following key lemmas:

\begin{lemma}
\label{lem:bounded-Breg}
Under \eqref{eq:RC}, the sequence of the Bregman residuals $\curr[\res]$ of \method is bounded as
\(
\curr[\res]^{2}
	\leq 2\gbound^{2}
\)
for all $\run\geq\start$.
\end{lemma}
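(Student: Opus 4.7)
The plan is to combine the standard three-point ``descent'' property of the prox-mapping with the relative continuity bound, and then to solve the resulting self-referential quadratic inequality for the Bregman residual.

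First I would invoke the optimality condition for the Bregman prox-step. Concretely, applying the standard three-point identity for $\nextiter = \prox{\iter}{-\iter[\step]\iter[\signal]}$ with the test point $\iter$ gives
\begin{equation*}
\iter[\step] \braket{\iter[\signal]}{\nextiter - \iter}
    \leq \breg(\iter,\iter) - \breg(\iter,\nextiter) - \breg(\nextiter,\iter)
    = -\bracks[\big]{\breg(\iter,\nextiter) + \breg(\nextiter,\iter)}.
\end{equation*}
Rearranging and noting that $\iter[\signal] = \nabla\obj(\iter)$ in the deterministic case yields
\begin{equation*}
\breg(\iter,\nextiter) + \breg(\nextiter,\iter)
    \leq \iter[\step]\braket{\nabla\obj(\iter)}{\iter - \nextiter}.
\end{equation*}

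Next, since $\iter\in\dom\subd\hreg$ (by \cref{prop:well-posed}) and $\nextiter\in\dom\hreg$, I can apply the \eqref{eq:RC} bound with $\point = \iter$ and $\pointalt = \nextiter$ to obtain
\begin{equation*}
\breg(\iter,\nextiter) + \breg(\nextiter,\iter)
    \leq \iter[\step] \gbound \sqrt{2\breg(\nextiter,\iter)}
    \leq \iter[\step] \gbound \sqrt{2\bracks[\big]{\breg(\iter,\nextiter) + \breg(\nextiter,\iter)}},
\end{equation*}
where the last step uses the trivial bound $\breg(\nextiter,\iter) \leq \breg(\iter,\nextiter) + \breg(\nextiter,\iter)$ (both summands are non-negative by convexity of $\hreg$).

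Finally, writing $A \coloneqq \breg(\iter,\nextiter) + \breg(\nextiter,\iter) \geq 0$, the previous display reads $A \leq \iter[\step] \gbound \sqrt{2A}$, which squares to $A \leq 2 \iter[\step]^{2} \gbound^{2}$. Dividing by $\iter[\step]^{2}$ gives exactly $\iter[\res]^{2} = A/\iter[\step]^{2} \leq 2\gbound^{2}$, as claimed. I do not foresee any real obstacle here: the main subtlety is only making sure the three-point inequality is applicable (which requires the well-posedness guaranteed by \cref{prop:well-posed}) and that the \eqref{eq:RC} bound is invoked with the right pair $(\point,\pointalt)$ so that the divergence appearing under the square root matches the ``backward'' term $\breg(\nextiter,\iter)$ that can be absorbed into $A$.
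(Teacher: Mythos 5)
Your proposal is correct and follows essentially the same route as the paper: both derive $\breg(\iter,\nextiter)+\breg(\nextiter,\iter)\leq\iter[\step]\braket{\nabla\obj(\iter)}{\iter-\nextiter}$ from the prox-step optimality (via \cref{prop:increment}), apply \eqref{eq:RC} with the pair $(\iter,\nextiter)$, absorb $\breg(\nextiter,\iter)$ into the symmetrized sum, and solve the resulting quadratic inequality. No gaps.
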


\begin{lemma}
\label{lem:smooth-Breg}
Under \eqref{eq:RS}, the sequence of the Bregman residuals $\curr[\res]$ of \method is square-summable,
\ie
\(
\sum_{\run} \res_{\run}^{2}
	< \infty.
\)
Consequently, the method's step-size converges to a strictly positive limit $\step_{\infty} > 0$.
\end{lemma}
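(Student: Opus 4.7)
The strategy is to derive a per-iteration descent estimate on $\obj(\curr)$ of the form $\obj(\curr) - \obj(\next) \gtrsim \curr[\step]\,\curr[\res]^{2}$ whenever the step-size is sufficiently small, and then exploit the inverse-square-root structure of \eqref{eq:ada-step} to rule out divergence of $\sum_{\run}\curr[\res]^{2}$ by contradiction.

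To set this up, I would begin from the prox-mapping defining $\next = \prox{\curr}{-\curr[\step]\nabla\obj(\curr)}$, instantiate its first-order optimality condition at the test point $\curr$, and combine it with the symmetric Bregman identity $\breg(\curr,\next) + \breg(\next,\curr) = \braket{\nabla\hreg(\curr) - \nabla\hreg(\next)}{\curr-\next} = \curr[\step]^{2}\curr[\res]^{2}$ to obtain $\braket{\nabla\obj(\curr)}{\curr-\next} \geq \curr[\step]\,\curr[\res]^{2}$. Feeding this into the descent-lemma characterization of \eqref{eq:RS} provided by \cref{prop:RS-prop}(ii), and using $\breg(\next,\curr) \leq \curr[\step]^{2}\curr[\res]^{2}$ (by nonnegativity of the other symmetric Bregman term), would yield
\[
\obj(\curr) - \obj(\next) \geq \curr[\step]\,\curr[\res]^{2}\bigl(1 - \smooth\,\curr[\step]\bigr).
\]

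With this descent relation in hand, I would argue by contradiction. Write $S_{\run} = \sum_{\runalt=\prestart}^{\run-1}\res_{\runalt}^{2}$, so that $\curr[\step] = 1/\sqrt{S_{\run}}$ by definition of \eqref{eq:ada-step}. If $S_{\infty} = \infty$, then $\curr[\step]\to 0$, and there exists $N_{0}$ with $\curr[\step] \leq 1/(2\smooth)$ for all $\run \geq N_{0}$; on this range the estimate above collapses to
\[
\obj(\curr) - \obj(\next) \geq \tfrac{1}{2}\cdot\frac{S_{\run+1}-S_{\run}}{\sqrt{S_{\run}}} \geq \sqrt{S_{\run+1}} - \sqrt{S_{\run}},
\]
the last inequality being the standard concavity bound $\sqrt{b}-\sqrt{a} \leq (b-a)/(2\sqrt{a})$ for $b\geq a>0$. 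Telescoping from $N_{0}$ to $\nRuns$ and using $\obj(\state_{\nRuns+1}) \geq \min\obj$ then bounds $\sqrt{S_{\nRuns+1}}$ by the $\nRuns$-independent quantity $\sqrt{S_{N_{0}}} + \obj(\state_{N_{0}}) - \min\obj$, contradicting $S_{\run}\to\infty$. Hence $\sum_{\run}\res_{\run}^{2} < \infty$, and $\curr[\step]\to 1/\sqrt{S_{\infty}} > 0$ follows immediately.

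The main technical subtlety I anticipate is the \emph{two-phase} nature of the descent: the usable bound $\obj(\curr)-\obj(\next) \geq (\curr[\step]/2)\,\curr[\res]^{2}$ only kicks in once $\curr[\step]$ is small enough to absorb the $\smooth\,\curr[\step]$ factor, and the early iterates cannot be controlled a priori without an assumed smoothness modulus on the step-size. The contradiction structure sidesteps this cleanly because only the tail $\run \geq N_{0}$ matters for the summability of $\res_{\run}^{2}$; the finitely many early terms are automatically absorbed into the baseline constant $\sqrt{S_{N_{0}}}$.
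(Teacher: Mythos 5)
Your proposal is correct and follows essentially the same route as the paper: the identical descent estimate $\obj(\curr)-\obj(\next)\geq\curr[\step]\curr[\res]^{2}(1-\smooth\curr[\step])$ from \cref{prop:RS-prop}, the same two-phase argument isolating the tail where $\smooth\curr[\step]\leq 1/2$, and the same contradiction via a lower bound of order $\sqrt{S_{\nRuns}}$ on the telescoped sum (the paper invokes the left-hand inequality of \cref{lem:ineq} where you use the elementary concavity bound $\sqrt{b}-\sqrt{a}\leq(b-a)/(2\sqrt{a})$ directly, which is the same estimate).
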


As we explain below, the boundedness estimate of \cref{lem:bounded-Breg} is necessary to show that the iterates of the method do not explode;
however, without further assumptions, it is not possible to sharpen this bound.
The principal technical difficulty \textendash\ and an important novelty of our analysis \textendash\ is the stabilization of the step-size to a strictly positive limit in \cref{lem:smooth-Breg}.
This property of \method plays a crucial role because the method is not slowed down near a solution.
To the best of our knowledge, there is no comparable result for the step-size of parameter-agnostic methods in the literature.%
\footnote{In more detail, \citef{LYC18}, \citef{LO19} and \citef{KLBC19} establish the summability of a suitable residual sequence to sharpen the $\bigoh(1/\sqrt{\nRuns})$ rate in their respective contexts, but this does not translate to a step-size stabilization result.
Under \eqref{eq:RC}/\eqref{eq:RS}, controlling the method's step-size is of vital importance because the gradients that enter the algorithm may be unbounded even over a bounded domain;
this crucial difficulty does not arise in any of the previous works on adaptive methods for ordinary Lipschitz problems.}

Armed with these two lemmas, we obtain the following series of estimates:
\begin{enumerate}
\item
Under \eqref{eq:RC}, the terms in the \acs{RHS} of \eqref{eq:reg-adamir} can be bounded respectively as
$\bigoh(\gbound \sqrt{\nRuns})$,
$\bigoh(\log(\gbound^{2}\nRuns)\sqrt{\nRuns})$,
and
$\bigoh(\gbound \sqrt{\nRuns})$.
As a result, we obtain an $\tilde\bigoh(1/\sqrt{\nRuns})$ rate of convergence.
\item
Under \eqref{eq:RS}, all terms in the \acs{RHS} of \eqref{eq:reg-adamir} can be bounded as $\bigoh(1)$,
so we obtain an $\bigoh(1/\nRuns)$ convergence rate for $\avg$.
\end{enumerate}
For the details of these calculations (including the explicit constants and logarithmic terms that appear in the statement of \cref{thm:rates-det}), we refer the reader to \cref{app:adamir}.

\subsection{Other modes of convergence}

In complement to the analysis above, we provide below a spinoff result for the method's ``last iterate'', \ie the actual trajectory of queried points.
The formal statement is as follows.

\begin{theorem}
\label{thm:last-iterate}
Suppose that $\obj$ satisfies \eqref{eq:RC} or \eqref{eq:RS}.
Then $\state_{\run}$ converges to $\argmin\obj$.
\end{theorem}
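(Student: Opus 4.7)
The plan is to combine the one-step energy inequality underlying the regret bound of Proposition~\ref{prop:reg-adamir} with a Bregman--Opial type closure argument. Specializing the three-point identity for the prox-mapping to an arbitrary $\sol \in \argmin\obj$ (so that the linear term becomes $\obj(\state_\run) - \min\obj \geq 0$) and keeping the per-step form rather than telescoping as in \eqref{eq:reg-adamir}, one obtains the quasi-Fej\'er recursion
\begin{equation*}
\breg(\sol,\state_{\run+1})
	\leq \breg(\sol,\state_{\run})
		- \step_{\run}\bracks{\obj(\state_{\run})-\min\obj}
		+ \step_{\run}^{2}\res_{\run}^{2},
\end{equation*}
which is the workhorse for both regularity regimes.

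Under \eqref{eq:RS}, Lemma~\ref{lem:smooth-Breg} supplies both $\sum_\run \res_\run^2 < \infty$ and $\step_\run \to \step_\infty > 0$. Summability of the perturbations $\step_\run^2\res_\run^2$ then allows me to invoke the deterministic Robbins--Siegmund lemma: $\breg(\sol,\state_\run)$ converges for every $\sol \in \argmin\obj$, and $\sum_\run \step_\run\bracks{\obj(\state_\run)-\min\obj} < \infty$. Dividing by the lower bound $\step_\infty$ upgrades this to $\sum_\run \bracks{\obj(\state_\run) - \min\obj} < \infty$, whence $\obj(\state_\run) \to \min\obj$. Strong convexity of $\hreg$ combined with boundedness of $\breg(\sol,\state_\run)$ confines $(\state_\run)$ to a bounded set; lower semicontinuity of $\obj$ forces every accumulation point into $\argmin\obj$; and a Bregman--Opial argument closes the loop: any two accumulation points $\bar\state_1,\bar\state_2$ must coincide because the convergence of $\breg(\bar\state_1,\state_\run)$ combined with continuity of $\breg(\bar\state_1,\cdot)$ at $\bar\state_1$ forces $\breg(\bar\state_1,\bar\state_2)=0$, hence $\bar\state_1=\bar\state_2$ by strong convexity.

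Under \eqref{eq:RC}, Lemma~\ref{lem:bounded-Breg} yields only $\res_\run^2 \leq 2\gbound^2$, so the perturbations are no longer summable. The adaptive step-size nonetheless tames their cumulative effect: the standard telescoping estimate $\sum_{\run=\start}^{\nRuns}\res_\run^2 / \sum_{\runalt=\prestart}^{\run-1}\res_\runalt^2 = \bigoh(\log \nRuns)$ gives $\sum_\run \step_\run^2 \res_\run^2 = \bigoh(\log \nRuns)$, while the uniform bound on $\res_\run$ also yields $\sum_\run \step_\run = \Theta(\sqrt{\nRuns})$. Plugging these into the displayed inequality produces $\breg(\sol,\state_\run) = \bigoh(\log \run)$ and $\sum_\run \step_\run\bracks{\obj(\state_\run)-\min\obj} = \bigoh(\log \nRuns)$; comparing with $\sum_\run \step_\run = \Theta(\sqrt{\nRuns})$ forces $\liminf_\run \obj(\state_\run) = \min\obj$. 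From this I extract an accumulation point $\bar\state \in \argmin\obj$ via strong convexity of $\hreg$ (which keeps the orbit inside slowly growing compact sets) and lower semicontinuity of $\obj$. To promote subsequential convergence to full convergence, I re-anchor the Fej\'er inequality at $\bar\state$ from a late index $\run_k$ where $\state_{\run_k}$ is already close to $\bar\state$, and exploit the fact that the tail $\sum_{\run\geq\run_k}\step_\run^2\res_\run^2$ becomes small once the denominator $\sum_{\runalt<\run_k}\res_\runalt^2$ of $\step_\run^{-2}$ has accumulated enough curvature mass.

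The main technical obstacle is the \eqref{eq:RC} case. Unlike \eqref{eq:RS}, Robbins--Siegmund does not apply out of the box, because $\sum_\run \step_\run^2\res_\run^2$ grows only logarithmically rather than being summable, and the step-size cannot be bounded below by a positive constant. The delicate step is the passage from the weighted ergodic information $\sum_\run \step_\run\bracks{\obj(\state_\run)-\min\obj} = \bigoh(\log \nRuns)$ together with $\sum_\run \step_\run = \Theta(\sqrt{\nRuns})$ to genuine last-iterate convergence of $(\state_\run)$; I expect the finishing step to rest on the specific adaptive form $\step_\run^{-2}=\sum_{\runalt<\run}\res_\runalt^2$, which provides a self-correcting mechanism whereby the tail Bregman oscillations around a candidate accumulation point can be closed off by the three-point identity once enough mass has been absorbed in the denominator.
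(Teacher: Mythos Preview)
Your \eqref{eq:RS} argument is essentially sound and close in spirit to the paper's, which uses the same quasi-Fej\'er recursion but works with $\inf_{\sol\in\sols}\breg(\sol,\state_\run)$ rather than a Bregman--Opial uniqueness argument; both routes succeed here because \cref{lem:smooth-Breg} makes the perturbations $\step_\run^2\res_\run^2$ summable.

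The genuine gap is in your \eqref{eq:RC} case. Your finishing move rests on the claim that the tail $\sum_{\run\geq\run_k}\step_\run^2\res_\run^2$ becomes small once $\sum_{\runalt<\run_k}\res_\runalt^2$ is large. This is false when $\step_\run\to 0$: writing $\step_\run^2\res_\run^2 = \res_\run^2/\sum_{\runalt<\run}\res_\runalt^2$, the tail from $\run_k$ to $\nRuns$ behaves like $\log\bigl(\sum_{\runalt<\nRuns}\res_\runalt^2\bigr) - \log\bigl(\sum_{\runalt<\run_k}\res_\runalt^2\bigr)$, which diverges in $\nRuns$ for every fixed $\run_k$. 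So re-anchoring the Fej\'er inequality and absorbing the tail cannot close the argument; the quasi-Fej\'er structure simply does not hold under \eqref{eq:RC} when $\step_\infty=0$.

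The paper handles this by a different mechanism. It first splits on whether $\step_\infty>0$ (in which case $\sum_\run\res_\run^2<\infty$ and your Fej\'er argument goes through verbatim) or $\step_\infty=0$. In the latter case it abandons Fej\'er monotonicity altogether and runs a \emph{trapping} induction on Bregman zones $\zone_\eps=\{\point:\breg(\sols,\point)<\eps\}$: if $\state_\run\in\zone_\eps\setminus\zone_{\eps/2}$, convexity (via the secant bound $\braket{\nabla\obj(\state_\run)}{\state_\run-\sol}\geq c(\eps)>0$) makes the negative drift $-\step_\run c(\eps)$ dominate $\step_\run^2\cdot 2\gbound^2$ once $\step_\run\leq c(\eps)\hstr/(2\gbound^2)$, so $\breg(\sol,\state_{\run+1})\leq\breg(\sol,\state_\run)$; if $\state_\run\in\zone_{\eps/2}$, the positive term is at most $\eps/2$ once $\step_\run\leq\sqrt{\eps\hstr}/(2\gbound)$, so $\breg(\sols,\state_{\run+1})<\eps$. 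Either way $\state_{\run+1}\in\zone_\eps$, and since $\step_\run\to 0$ both thresholds are eventually met. The key point you are missing is that the negative term in your displayed inequality must be used \emph{per step} with a uniform lower bound away from $\sols$, not merely discarded as nonpositive; this is exactly what converts the non-summable perturbation into a controllable one.
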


The main idea of the proof (which we detail in the appendix) consists of two steps.
The first key step is to show that, under $\eqref{eq:RC}\cup\eqref{eq:RS}$, the iterates of \method have $\liminf\obj(\curr) = \min\obj$;
we show this in \cref{prop:conv-subsequence1}.
Now, given the existence of a convergent subsequence, the rest of our proof strategy branches out depending on whether $\obj$ satisfies \eqref{eq:RC} or \eqref{eq:RS}.
Under \eqref{eq:RS}, the analysis relies on arguments that involve a quasi-Fejér argument as in \cite{Com01,Bot98}.
However, under \eqref{eq:RC}, the quasi-Fejér property fails, so we prove the convergence of $\curr$ via a novel induction argument that shows that the method's iterates remain trapped within a Bregman neighborhood of $\sol$ if they enter it with a sufficiently small step-size;
we provide the relevant details in \cref{app:Last}.

\para{Non-convex objectives}
We close this section with two remarks on non-convex objectives.
First, \cref{thm:last-iterate} applies verbatim to non-convex objectives $\obj$ satisfying the ``secant condition'' \cite{Bot98,ZMBB+20}
\begin{equation}
\label{eq:secant}
\inf\setdef{\braket{\nabla\obj(\point)}{\point - \sol}}{\sol\in\argmin\obj,\point\in\cpt}
	> 0
\end{equation}
for every closed subset $\cpt$ of $\points$ that is separated by neighborhoods from $\argmin\obj$.
In \cref{app:Last}, our results have all been derived based on this more general condition (it is straightforward to verify that \eqref{eq:secant} always holds for convex functions).

Even more generally, \cref{lem:smooth-Breg} also allows us to derive results for general non-convex problems.
Indeed, the proof of \cref{prop:RS-prop} shows that 
\(
\min_{1\leq\run\leq \nRuns} \res_{\run}^{2}
	= \bigoh(1/\nRuns)
\)
\emph{without} requiring any properties on $\obj$ other than \acl{RS}.
As a result, we conclude that the ``best iterate'' of the method \textendash\ \ie the iterate with the least residual \textendash\ decays as $\bigoh(1/\nRuns)$.
This fact partially generalizes a similar result obtained in \cite{LO19,WWB19} for \adagrad applied to non-convex problems;
however, an in-depth discussion of this property would take us too far afield, so we do not attempt it.

\section{Stochastic analysis}
\label{sec:stochastic}

In this last section, we focus on the stochastic case ($\sdev>0$).
Our main results here are as follows.

\begin{theorem}
\label{thm:stoch}
Let $\state_{\run}$, $\run=\running$, denote the sequence of iterates generated by \method, and let
$\iBreg = \breg(\sol,\state_{\start})$
and
$\gbound_{\sdev} = \gbound + \sdev/\sqrt{\hstr}$.
Then, under \eqref{eq:RC}, we have
\begin{equation}
\label{eq:bound-RC-stoch}
\ex\left[ \obj(\avg)-\obj(\sol)\right]
	\leq (\iBreg + \commbound) \sqrt{\frac{\res_{\prestart}^{2} + 2\gbound_{\sdev}^{2}}{\nRuns}}
\end{equation}
where $\commbound = 8 \gbound_{\sdev}^{2} / \res_{\prestart}^{2} + 2\log(1 + 2\gbound_{\sdev}^{2} \nRuns / \res_{\prestart}^{2})$.
\end{theorem}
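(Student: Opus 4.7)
My plan is to port the \eqref{eq:RC} branch of \cref{thm:rates-det} to the stochastic setting by (i) upgrading the pointwise residual bound of \cref{lem:bounded-Breg} to account for noisy queries, (ii) re-deriving the per-iteration regret inequality of \cref{prop:reg-adamir} while carefully tracking the extra noise contributions, and (iii) taking expectations and closing the argument with Jensen's inequality applied to $\obj(\avg)$. The structural fact underpinning the whole analysis is that the step-size $\curr[\step]$ is $\filter_{\run}$-measurable by construction (it is built from $\state_{\prestart},\ldots,\state_{\run}$ only), so any noise contribution paired with an $\filter_{\run}$-measurable partner will vanish in expectation; throughout, the parameter $\gbound$ in every estimate should morph into $\gbound_{\sdev} = \gbound + \sdev/\sqrt{\hstr}$.

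\textbf{Step 1: Stochastic residual bound.} I would first show that $\curr[\res]^{2} \leq 2\gbound_{\sdev}^{2}$ almost surely for every $\run$. Applying the three-point inequality for the prox-mapping with comparator $\pointalt = \curr$ gives the lower estimate $\curr[\step]\braket{\signal_{\run}}{\curr - \next} \geq \breg(\curr,\next) + \breg(\next,\curr) = \curr[\step]^{2}\curr[\res]^{2}$. Splitting $\signal_{\run} = \nabla\obj(\curr) + \noise_{\run}$, bounding the deterministic part via \eqref{eq:RC} as $\braket{\nabla\obj(\curr)}{\curr - \next} \leq \gbound\sqrt{2\breg(\next,\curr)}$, and the stochastic part by Cauchy--Schwarz together with the strong convexity of $\hreg$ as $\braket{\noise_{\run}}{\curr - \next} \leq (\sdev/\sqrt{\hstr})\sqrt{2\breg(\next,\curr)}$, combines to
\[
\curr[\step]^{2}\curr[\res]^{2} \leq \curr[\step]\gbound_{\sdev}\sqrt{2\breg(\next,\curr)} \leq \sqrt{2}\,\curr[\step]^{2}\gbound_{\sdev}\curr[\res],
\]
where the last step uses $\breg(\next,\curr) \leq \curr[\step]^{2}\curr[\res]^{2}$. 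Rearranging yields the claim, with the inflated constant $\gbound_{\sdev}$ emerging naturally from the additive combination of the two bounds.

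\textbf{Step 2: Stochastic regret bound and taking expectations.} Mirroring the derivation of \cref{prop:reg-adamir}, I would start from the three-point inequality with comparator $\sol$, decompose $\next - \sol = (\next - \curr) + (\curr - \sol)$, and apply convexity plus the noise split $\signal_{\run} = \nabla\obj(\curr) + \noise_{\run}$ to obtain, for every $\run$,
\[
\curr[\step]\bracks{\obj(\curr) - \obj(\sol)} \leq \breg(\sol,\curr) - \breg(\sol,\next) + \mathrm{slack}_{\run} - \curr[\step]\braket{\noise_{\run}}{\curr - \sol},
\]
where $\mathrm{slack}_{\run}$ is controlled by $\curr[\step]^{2}\curr[\res]^{2}$ up to an $\bigoh(\curr[\step]^{2}\sdev^{2}/\hstr)$ contribution from the cross-term $\curr[\step]\braket{\noise_{\run}}{\curr - \next}$ that Young's inequality absorbs against $\breg(\next,\curr)$. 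Summing over $\run$, dividing by $\last[\step]$, and exploiting monotonicity of $\curr[\step]$ reproduces the structure of \eqref{eq:reg-adamir} with $\gbound$ replaced by $\gbound_{\sdev}$, plus an extra noise term $-\sum_{\run=\start}^{\nRuns}\braket{\noise_{\run}}{\curr - \sol}$. Since $\curr[\step]$, $\curr$, and $\sol$ are all $\filter_{\run}$-measurable while $\exof{\noise_{\run}\given\filter_{\run}} = 0$, this term has zero expectation; combining with the almost-sure bound $1/\last[\step] \leq \sqrt{\res_{\prestart}^{2} + 2\gbound_{\sdev}^{2}\nRuns}$ from Step~1 and the standard \adagrad-style telescoping estimates that produce the $\log(1 + 2\gbound_{\sdev}^{2}\nRuns/\res_{\prestart}^{2})$ factor, Jensen's inequality $\obj(\avg) \leq (1/\nRuns)\sum_{\run}\obj(\curr)$ closes out the stated bound.

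\textbf{Main obstacle.} The principal subtlety is the cross-term $\curr[\step]\braket{\noise_{\run}}{\curr - \next}$: here $\next$ is \emph{not} $\filter_{\run}$-measurable (it depends on $\signal_{\run}$, and hence on $\noise_{\run}$), so a direct martingale argument cannot annihilate it. Absorbing it via Young's inequality plus the strong convexity of $\hreg$ is precisely the mechanism that forces the residual constant to inflate from $\gbound$ to $\gbound_{\sdev} = \gbound + \sdev/\sqrt{\hstr}$; this inflation then propagates cleanly through every downstream estimate without any new telescoping argument. A related but milder concern is that $\last[\step]$ is itself a random variable, but the almost-sure residual bound from Step~1 dispatches this by giving a deterministic upper bound on $1/\last[\step]$ directly.
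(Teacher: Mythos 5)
Your Step~1 is sound and coincides with the paper's \cref{lem:bound-stoch}: the three-point inequality, the split of $\signal_{\run}$ into $\nabla\obj(\curr)+\noise_{\run}$, \eqref{eq:RC} for the gradient part, and Cauchy\textendash Schwarz plus $\hstr$-strong convexity for the noise part give exactly $\curr[\res]^{2}\leq 2\gbound_{\sdev}^{2}$ almost surely, and your diagnosis that the non-adapted cross-term $\braket{\noise_{\run}}{\curr-\next}$ must be absorbed deterministically (rather than killed in expectation) is the right one.

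Step~2, however, has a genuine gap. You propose to reproduce the structure of \eqref{eq:reg-adamir}, which in the deterministic proof is obtained by dividing by $\curr[\step]$, Abel-summing the terms $\bracks{1/\curr[\step]-1/\prev[\step]}\breg(\sol,\curr)$, and invoking the pathwise bound $\breg(\sol,\curr)\leq\iBreg+\sum_{\runalt}\iter[\step]^{2}\iter[\res]^{2}$ of \cref{lem:bound-2}. Both ingredients break under noise. First, \cref{lem:bound-2} uses $\braket{\iter[\signal]}{\iter-\sol}\geq 0$, which holds for exact gradients by convexity and optimality of $\sol$ but fails pathwise when $\iter[\signal]$ contains $\noise_{\runalt}$; the surviving term $\sum_{\runalt}\iter[\step]\braket{\noise_{\runalt}}{\iter-\sol}$ is only controlled in expectation, not almost surely, so the Abel summation cannot be closed pathwise. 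Second, after dividing the summed inequality by $\last[\step]$, the noise contribution is not $-\sum_{\run}\braket{\noise_{\run}}{\curr-\sol}$ but $-\sum_{\run}(\curr[\step]/\last[\step])\braket{\noise_{\run}}{\curr-\sol}$, and $\last[\step]$ is $\filter_{\nRuns}$-measurable but \emph{not} $\filter_{\run}$-measurable (it depends on all future residuals, hence on future noise). The martingale argument therefore does not annihilate this term, and upper-bounding $1/\last[\step]$ deterministically does not help because the term is signed. The paper's proof avoids both issues by never dividing by $\last[\step]$: it telescopes $\breg(\sol,\curr)$ via \cref{prop:increment} to get the \emph{weighted} bound $\sum_{\run}\curr[\step]\braket{\curr[\signal]}{\curr-\sol}\leq\iBreg+\sum_{\run}\curr[\step]^{2}\curr[\res]^{2}$ (no Abel summation, no \cref{lem:bound-2}), splits off the noise term $\sum_{\run}\curr[\step]\braket{\noise_{\run}}{\curr-\sol}$, which \emph{is} a martingale because $\curr[\step]$ and $\curr$ are $\filter_{\run}$-measurable, and then converts weighted to unweighted regret through the deterministic lower bound $\curr[\step]\geq 1/\sqrt{(\res_{\prestart}^{2}+2\gbound_{\sdev}^{2})\nRuns}$ supplied by Step~1, before finishing with \cref{lem:logarithmic-3} and Jensen. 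You would need to restructure Step~2 along these lines for the argument to go through.
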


Finally, if \eqref{eq:RS} kicks in, we have the sharper guarantee:

\begin{theorem} 
\label{thm:stoch-smooth}
With notation as above, if $\obj$ satisfies \eqref{eq:RS}, \method enjoys the bound
\begin{equation}
\label{eq:bound-RS-smooth}
\exof{\obj(\avg)-\obj(\sol)}
	\leq (2 + \iBreg + \commbound)
		\bracks*{ \frac{\smoothbound}{\nRuns}+\frac{\stochbound\sdev}{\sqrt{\nRuns}} }
\end{equation}
where:
\begin{subequations}
\begin{alignat}{3}
&\text{a\upshape)}
	&\quad
\smoothbound
	&= \res_{\prestart}
		+ 2\bracks{\obj(\state_{\start}) - \min\obj}
		+ \smooth \parens*{2 + 8\gbound_{\sdev}^{2} \big/ \res_{\prestart}^{2} + 2\log(4\smooth^{2}/\res_{\prestart}^{2})}.
	\hspace{1em}
	\\
&\text{b\upshape)}
	&\quad
\stochbound
	&= \sqrt{(4 + 2\commbound)/\hstr}.
\end{alignat}
\end{subequations}
\end{theorem}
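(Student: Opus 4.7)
The plan is to extend the deterministic analysis of Theorem~\ref{thm:rates-det}(iii) by carefully handling the additional noise terms. Specifically, re-running the derivation of Proposition~\ref{prop:reg-adamir} with $\signal_\run = \nabla\obj(\curr) + \curr[\noise]$ in place of $\nabla\obj(\curr)$ and invoking convexity of $\obj$ yields
\begin{equation*}
\sum_{\run=\start}^{\nRuns} [\obj(\curr) - \obj(\sol)]
	\leq \frac{\iBreg}{\last[\step]}
		+ \frac{\sum_\run \curr[\step]^{2}\curr[\res]^{2}}{\last[\step]}
		+ \sum_\run \curr[\step]\curr[\res]^{2}
		- \sum_\run \braket{\curr[\noise]}{\curr - \sol}.
\end{equation*}
Since $\curr$ is $\curr[\filter]$-measurable and $\exof{\curr[\noise]\given\curr[\filter]}=0$, the final sum is a mean-zero martingale difference series and vanishes in expectation.

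The second step is a stochastic analog of Lemma~\ref{lem:smooth-Breg}. The three-point identity and the update rule give the exact equality $\curr[\step]\curr[\res]^{2} = \braket{\signal_\run}{\curr - \next}$; substituting $\signal_\run = \nabla\obj(\curr) + \curr[\noise]$ and applying the \eqref{eq:RS} descent inequality produces
\begin{equation*}
\curr[\step]\curr[\res]^{2}
	\leq [\obj(\curr) - \obj(\next)]
		+ \smooth\,\breg(\next,\curr)
		+ \braket{\curr[\noise]}{\curr - \next}.
\end{equation*}
Once $\curr[\step] \leq 1/(2\smooth)$, which holds after finitely many steps since $1/\curr[\step]$ is non-decreasing, the middle term $\smooth\,\breg(\next,\curr) \leq \smooth\,\curr[\step]^{2}\curr[\res]^{2}$ can be absorbed into the \acs{LHS}, the objective piece telescopes to $\obj(\state_{\start}) - \min\obj$, and the noise inner product is controlled by Young's inequality after bounding $\norm{\curr - \next}^{2} \leq (2/\hstr)\breg(\next,\curr)$ via strong convexity of $\hreg$. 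The resulting expected bound on $\sum_\run \curr[\step]\curr[\res]^{2}$ comprises a finite RS-driven part plus a $\bigoh(\sdev^{2}\nRuns/\hstr)$ stochastic correction.

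For the dominant term $\iBreg/\last[\step] = \iBreg\sqrt{\sum_\runalt \res_\runalt^{2}}$, the same identity together with Cauchy\textendash Schwarz and strong convexity of $\hreg$ yields the pointwise bound $\curr[\res]^{2} \leq (2/\hstr)\dnorm{\signal_\run}^{2}$; taking conditional expectations and invoking \eqref{eq:SFO} gives $\exof{\curr[\res]^{2}\given\curr[\filter]} \leq (2/\hstr)(\dnorm{\nabla\obj(\curr)}^{2} + \sdev^{2})$. The deterministic contribution sums to the finite RS-type quantity appearing in $\smoothbound^{2}$ (mirroring Lemma~\ref{lem:smooth-Breg}), while the noise piece accumulates to $(2\sdev^{2}/\hstr)\nRuns$; Jensen's inequality $\exof{\sqrt{X}}\leq\sqrt{\exof{X}}$ then delivers $\exof{1/\last[\step]} \leq \smoothbound + \stochbound\,\sdev\sqrt{\nRuns}$ with the precise constants of the statement. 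Plugging these pieces back, dividing by $\nRuns$, and using $\obj(\avg) \leq (1/\nRuns)\sum_\run \obj(\curr)$ completes the argument.

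The hard part is the second step. The inner product $\braket{\curr[\noise]}{\curr - \next}$ is \emph{not} a martingale increment, because $\next$ depends on $\curr[\noise]$ through the prox-mapping; a naive expectation therefore destroys the \eqref{eq:RS}-driven $1/\nRuns$ improvement and leaves only the generic $1/\sqrt{\nRuns}$ noise-floor rate. The delicate Young absorption \textendash\ converting $\norm{\curr - \next}^{2}$ into $\breg(\next,\curr) \leq \curr[\step]^{2}\curr[\res]^{2}$ via the strong convexity of $\hreg$ \textendash\ is precisely what decouples the noise from the iterate update and preserves the mixed $\smoothbound/\nRuns + \stochbound\sdev/\sqrt{\nRuns}$ rate.
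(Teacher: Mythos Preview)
Your proposal has two genuine gaps.

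\textbf{Step 1 does not go through.} Re-running Proposition~\ref{prop:reg-adamir} in the stochastic setting does not yield the bound you write. That proposition's proof passes through Lemma~\ref{lem:bound-2}, which bounds $\breg(\sol,\curr)$ by $\iBreg+\sum_{\runalt}\step_\runalt^2\res_\runalt^2$ using $\braket{\signal_\runalt}{\state_\runalt-\sol}\geq 0$. With noise, this bound picks up an extra term $-\sum_{\runalt<\run}\step_\runalt\braket{\noise_\runalt}{\state_\runalt-\sol}$, and after the Abel summation it is multiplied by $1/\step_\run-1/\step_{\run-1}$. But this increment depends on $\res_{\run-1}$, hence on $\noise_{\run-1}$, so the product with $\braket{\noise_{\run-1}}{\state_{\run-1}-\sol}$ is \emph{not} mean-zero and cannot be discarded. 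Even if it could, you would still need $\exof{(\sum_\run\step_\run^2\res_\run^2)/\step_\nRuns}$, where the two factors are correlated.

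\textbf{Step 3 does not go through.} Your route to $\exof{1/\step_\nRuns}$ via $\res_\run^2\leq(2/\hstr)\dnorm{\signal_\run}^2$ requires controlling $\sum_\run\dnorm{\nabla\obj(\curr)}^2$, and you assert this ``sums to the finite RS-type quantity'' by analogy with Lemma~\ref{lem:smooth-Breg}. But that lemma bounds $\sum_\run\res_\run^2$, not $\sum_\run\dnorm{\nabla\obj(\curr)}^2$; in the relative framework the ambient dual norm of $\nabla\obj$ may blow up near the boundary of $\points$ even when the Bregman residuals stay bounded, so no such RS-type estimate exists. This is precisely the obstruction the paper flags when motivating the Bregman residual in place of \eqref{eq:adagrad}.

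The paper avoids both issues by a different decomposition. Instead of Abel-summing, it uses the ``boxing'' inequality $\exof{\step_\nRuns\sum_\run(\obj(\curr)-\obj(\sol))}\leq\iBreg+\exof{\sum_\run\step_\run^2\res_\run^2}$, which needs no uniform bound on $\breg(\sol,\curr)$. It then establishes an \emph{almost-sure} lower bound on $\step_\nRuns$ (not merely in expectation): your Step-2 descent argument, combined with the a.s.\ bound $\res_\run^2\leq 2\gbound_\sdev^2$ from \eqref{eq:RC} and $\dnorm{\noise_\run}\leq\sdev$, gives $1/\step_\nRuns\leq\smoothbound+\stochbound\sdev\sqrt{\nRuns}$ pathwise. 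This deterministic lower bound can then be pulled outside the expectation on the left of the boxing inequality, delivering the stated rate.
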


The full proof of \cref{thm:stoch,thm:stoch-smooth} is relegated to the supplement, but the key steps are as follows:
\begin{enumerate}
[left=0pt,label={\bfseries Step \arabic*:}]

\item
We first show that, under \eqref{eq:RC}, the method's residuals are bounded as $\res_{\run}^{2} \leq 2\gbound_{\sdev}^{2}$ \as.

\item
With this at hand, the workhorse for our analysis is the following boxing bound for the mean ``weighted'' regret $\sum_{\run=1}^{\nRuns} \exof{\step_{\run}\braket{\nabla \obj(\state_{\run})}{\curr - \sol}}$:
\begin{equation*}
\exof*{\step_{\nRuns} \sum_{\run=\start}^{\nRuns} \bracks{\obj(\state_{\run}) - \obj(\sol)}}
	\leq \exof*{\sum_{\run=1}^{\nRuns} \step_{\run}\braket{\nabla \obj(\state_{\run})}{\curr - \sol}}
	\leq \iBreg
		+\exof*{\sum_{\run=\start}^{\nRuns} \step_{\run}^{2} \res_{\run}^{2}}
\end{equation*}
We prove this bound in the supplement, where we also show that $\exof{\sum_{\run=\start}^{\nRuns} \step_{\run}^{2} \res_{\run}^{2}} = \bigoh(\log\nRuns)$.
\end{enumerate}

At this point the analysis between \cref{thm:stoch,thm:stoch-smooth} branches out.
First, in the case of \cref{thm:stoch}, we show that the method's step-size is bounded from below as $\step_{\run} \geq 1/\sqrt{(\res_{\prestart}^{2} + 2\gbound_{\sdev}^{2}) \run}$;
the guarantee \eqref{eq:bound-RC-stoch} then follows by the boxing bound.
Instead, in the case of \cref{thm:stoch-smooth}, the analysis is more involved and relies crucially on the lower bound $\step_{\run} \geq 1/(\smoothbound + \stochbound\sdev\sqrt{\run})$.
The bound \eqref{eq:bound-RS-smooth} then follows by combining this lower bound for $\step_{\run}$ with the regret boxing bound above.

In the supplement, we also conclude a series of numerical experiments in random Fisher markets that illustrate the method's adaptation properties in an archetypal non-Lipschitz problem.

\section{Concluding remarks}
\label{sec:conclusions}
%
%
Our theoretical analysis confirms that \method concurrently achieves optimal rates of convergence in relatively continuous and relatively smooth problems, both stochastic or deterministic, constrained or unconstrained, and without requiring any prior knowledge of the problem's smoothness/continuity parameters.
These appealing properties open the door to several future research directions, especially regarding the method's convergence properties in non-convex problems.
The ``best-iterate'' discussion of \cref{sec:deterministic} is a first step along the way, but many questions and problems remain open in this direction, especially regarding the convergence of the method's ``last iterate'' in stochastic, non-convex settings.
We defer these questions to future work.

\appendix
\numberwithin{equation}{section}		
\numberwithin{lemma}{section}		
\numberwithin{proposition}{section}		
\numberwithin{theorem}{section}		

\section{Bregman regularizers and mirror maps}
\label{app:Bregman}

Our goal in this appendix is to derive some basic properties for the class of Bregman proximal maps and \acl{MD} methods considered in the main body of our paper.
Versions of the properties that we derive are known in the literature \citep[see \eg][and references therein]{CT93,BecTeb03,Nes09,SS11}.

To begin, we introduce two notions that will be particularly useful in the sequel.
The first is the convex conjugate of a Bregman function $\hreg$, \ie
\begin{equation}
\label{eq:conj}
\hreg^{\ast}(\dpoint)
	= \max_{\point\in\dom\hreg} \{\braket{\dpoint}{\point} - \hreg(\point)\}
\end{equation}
and the associated primal-dual \emph{mirror map} $\mirror\from\dspace\to\dom\subd\hreg$:
\begin{equation}
\label{eq:mirror}
\mirror(\dpoint)
	= \argmax_{\point\in\dom\hreg} \{\braket{\dpoint}{\point} - \hreg(\point)\}
\end{equation}
That the above is well-defined is a consequence of the fact that $\hreg$ is proper, \ac{lsc}, convex and coercive;%
\footnote{The latter holds because $\hreg$ is strongly convex relative to $\norm{\cdot}_{\point}$, and $\norm{\cdot}_{\point}$ has been tacitly assumed bounded from below by a multiple $\mu\norm{\cdot}$ of $\norm{\cdot}$.}
in addition, the fact that $\mirror$ takes values in $\dom\subd\hreg$ follows from the fact that any solution of \eqref{eq:mirror} must necessarily have nonempty subdifferential (see below).
For completeness, we also recall here the definition of the Bregman proximal mapping
\begin{equation}
\label{eq:prox}
\tag{prox}
\prox{\point}{\dvec}
	= \argmin_{\pointalt\in\dom\hreg} \{ \braket{\dvec}{\point - \pointalt} + \breg(\pointalt,\point) \}
\end{equation}
valid for all $\point \in \dom\subd\hreg$ and all $\dvec \in \dspace$.

We then have the following basic lemma connecting the above notions:

\begin{lemma}
\label{lem:Bregman}
Let $\hreg$ be a regularizer in the sense of \cref{def:reg} with $\hstr$-strong convexity modulus. 
Then, for all $\point \in \dom\subd\hreg$ and all $\dvec,\dpoint \in \dspace$ we have:
\begin{enumerate}
\item
$\point=\mirror(\dpoint)\iff \dpoint \in \subd\hreg(\point)$.
\item
\label{eq:domain}
$\new
	= \prox{\point}{\dvec}
	\iff
\nabla\hreg(\point)
	+ \dvec \in \subd\hreg(\point)
	\iff
\new
	= \mirror(\nabla\hreg(\point) + \dvec)$.
\item
Finally, if $\point=\mirror(\dpoint)$ and $\base \in \points$, we get:
\begin{equation}
\label{eq:subeq}
\braket{\nabla\hreg(\point)}{\point-\base}
	\leq \braket{\dpoint}{\point-\base}.
\end{equation}
\end{enumerate}
\end{lemma}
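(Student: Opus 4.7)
The plan is to establish the three assertions in the order given, using Fenchel conjugate duality as the backbone: first identify $\mirror$ with the subgradient inversion of $\hreg$, then reduce the prox-mapping to a mirror map, and finally read off the variational inequality as a first-order optimality condition.

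For assertion~(i), I would unpack the definition $\mirror(\dpoint) = \argmax_{\point \in \dom\hreg}\{\braket{\dpoint}{\point} - \hreg(\point)\}$. Since $\hreg$ is proper, \ac{lsc}, coercive and $\hstr$-strongly convex, the map $\point \mapsto \hreg(\point) - \braket{\dpoint}{\point}$ admits a unique minimizer, so $\mirror(\dpoint)$ is well-defined and single-valued. Fermat's rule for convex minimization then says $\point = \mirror(\dpoint)$ if and only if $0 \in \subd(\hreg - \braket{\dpoint}{\cdot})(\point) = \subd\hreg(\point) - \dpoint$, i.e., $\dpoint \in \subd\hreg(\point)$. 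As a byproduct, this forces $\subd\hreg(\point) \neq \varnothing$ at any $\point$ in the range of $\mirror$, confirming that $\mirror$ takes values in $\dom\subd\hreg$.

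For assertion~(ii), I would expand the Bregman divergence inside the definition of $\prox{\point}{\dvec}$:
\begin{equation*}
\braket{\dvec}{\point - \pointalt} + \breg(\pointalt,\point)
 = \hreg(\pointalt) - \braket{\nabla\hreg(\point) + \dvec}{\pointalt} + \text{const}(\point),
\end{equation*}
where $\text{const}(\point)$ collects the terms independent of $\pointalt$. Minimizing over $\pointalt \in \dom\hreg$ is therefore equivalent to maximizing $\braket{\nabla\hreg(\point)+\dvec}{\pointalt} - \hreg(\pointalt)$, which is precisely the problem solved by $\mirror(\nabla\hreg(\point) + \dvec)$. Invoking assertion~(i) with $\dpoint = \nabla\hreg(\point) + \dvec$ yields the inclusion characterization, and in particular shows that $\new \in \dom\subd\hreg$ (this is exactly the content that will be used in \cref{prop:well-posed} to guarantee that the \ac{MD} recursion is well-posed).

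For assertion~(iii), the point $\point = \mirror(\dpoint)$ is, by construction, the (unique) minimizer of the convex function $\pointalt \mapsto \hreg(\pointalt) - \braket{\dpoint}{\pointalt}$ over $\dom\hreg$. The standard first-order optimality condition for minimization of a convex function over a convex set therefore gives
\begin{equation*}
\braket{\nabla\hreg(\point) - \dpoint}{\base - \point} \geq 0
\quad\text{for all } \base \in \dom\hreg,
\end{equation*}
and since $\points \subseteq \dom\hreg$, rearranging yields the claimed inequality. The main subtlety \textemdash\ and the only real obstacle \textemdash\ lies in the handling of the effective domains: $\dom\hreg$ need not be open, so $\nabla\hreg(\point)$ must be understood as the continuous selection from $\subd\hreg$ guaranteed by \cref{def:reg}, and the first-order conditions have to be phrased as subdifferential inclusions rather than gradient equations. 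Strong convexity of $\hreg$ is what keeps arguments clean by guaranteeing uniqueness throughout, and coercivity ensures attainment of both the $\argmax$ defining $\mirror$ and the $\argmin$ defining $\proxmap$.
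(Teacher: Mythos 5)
Parts (i) and (ii) of your argument are correct and coincide with the paper's proof: Fermat's rule gives the equivalence $\point=\mirror(\dpoint)\iff\dpoint\in\subd\hreg(\point)$, and completing the linear term inside the Bregman divergence reduces the prox problem to the mirror problem. The issue is in part (iii), where there is a genuine gap. You derive the variational inequality $\braket{\nabla\hreg(\point)-\dpoint}{\base-\point}\geq 0$ from "the standard first-order optimality condition" for the minimizer of $\pointalt\mapsto\hreg(\pointalt)-\braket{\dpoint}{\pointalt}$. But that condition, in the nonsmooth setting the lemma lives in, only yields the subdifferential inclusion $\dpoint\in\subd\hreg(\point)$ (which is assertion (i), already known); it does not produce the inequality for the \emph{specific continuous selection} $\nabla\hreg(\point)$. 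When $\subd\hreg(\point)$ is not a singleton \textendash\ which the paper explicitly allows and cares about, e.g.\ for domains contained in lower-dimensional affine subspaces \textendash\ two subgradients at the same point need not satisfy any pointwise comparison of the form $\braket{v_{1}-v_{2}}{\base-\point}\geq 0$, so the step where you replace "some subgradient" by $\nabla\hreg(\point)$ is exactly what needs proof. You flag the subtlety in your closing remarks but never resolve it.

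The paper closes this gap with a one-dimensional argument: it first reduces to $\base\in\relint\points$ by continuity, then studies $\phi(t)=\hreg(\point+t(\base-\point))-\hreg(\point)-t\braket{\dpoint}{\base-\point}$ on $[0,1]$. Since $\dpoint\in\subd\hreg(\point)$, one has $\phi\geq 0=\phi(0)$, and the continuity of the selection $\nabla\hreg$ along the segment shows that $\psi(t)=\braket{\nabla\hreg(\point+t(\base-\point))-\dpoint}{\base-\point}$ is a continuous selection of subgradients of the convex function $\phi$, forcing $\phi$ to be continuously differentiable with $\phi'=\psi$. Minimality of $t=0$ then gives $\phi'(0)=\braket{\nabla\hreg(\point)-\dpoint}{\base-\point}\geq 0$. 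In other words, the inequality holds because the continuous selection realizes the directional derivative of $\hreg$ along directions into the domain; this is where the hypothesis that $\subd\hreg$ admits a \emph{continuous} selection is actually used, and your proof never invokes it. To repair your argument you would need to supply this restriction-to-a-segment step (or an equivalent directional-derivative computation) rather than appeal to the generic optimality condition.
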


\begin{proof}
For the first equivalence, note that $\point$ solves \eqref{eq:conj} if and only if $0 \in \dpoint - \subd\hreg(\point)$ and hence if and only if $\dpoint \in \subd\hreg(\point)$.
Working in the same spirit for the second equivalence, we get that $\new$ solves \eqref{eq:prox}
if and only if $\nabla\hreg(\point)+\dvec \in \subd\hreg(\new)$ and therefore if and only if 
$\new=\mirror(\nabla\hreg(\point)+\dvec)$.

For our last claim, by a simple continuity argument, it is sufficient to show that the inequality holds for the 
relative interior $\relint\points$ of $\points$ (which, in particular, is contained in $\dom\subd\hreg$).
In order to show this, pick a base point $\base\in\relint\points$, and let
\begin{equation}
\phi(t)
	= \hreg(\point+t(\base-\point))
	- [\hreg(\point)+\braket{\dpoint}{t(\base-\point)}]
	\quad
	\text{for all $t\in[0,1]$}.
\end{equation}
Since, $\hreg$ is strongly convex and $\dpoint\in \subd\hreg(\point)$ due to the first equivalence, it follows
that $\phi(t)\geq 0$ with equality if and only if $t=0$. Since, $\psi(t)=\braket{\nabla\hreg(\point+
t(\base-x))-\dpoint}{\base-\point}$ is a continuous selection of subgradients of $\phi$ and both $\phi$ and
$\psi$ are continuous over $[0,1]$, it follows that $\phi$ is continuously differentiable with $\phi'=\psi$
on $[0,1]$. Hence, with $\phi$ convex and $\phi(t)\geq 0=\phi(0)$ for all $t \in [0,1]$, we conclude that
$\phi'(0)=\braket{\nabla\hreg(x)-\dpoint}{\base-\point}\geq 0$ and thus we obtain the result.
\end{proof}

As a corollary, we have:

\begin{proof}[Proof of \cref{prop:well-posed}]
Our claim follows directly from a tandem application of items (1) and (2) in \cref{lem:Bregman}.
\end{proof}

To proceed, the basic ingredient for establishing connections between Bregman proximal steps is a generalization of the rule of cosines which is known in the literature as the ``three-point identity'' \citep{CT93}.
This will be our main tool for deriving the main estimates for our analysis.
Being more precise, we have the following lemma:

\begin{lemma}
\label{lem:3point}
Let $\hreg$ be a regularizer in the sense of \cref{def:reg}.
Then, for all $\base \in \dom\hreg$ and all $\point,\pointalt \in \dom\subd\hreg$, we have:
\begin{equation}
\breg(\base,\pointalt )= \breg(\base,\point)+\breg(\point,\pointalt)+\braket{\nabla\hreg(\pointalt)-\nabla\hreg(\point)}{\point-\base}.
\end{equation}
\end{lemma}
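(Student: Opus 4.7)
The plan is to prove the identity by direct algebraic expansion using the definition of the Bregman divergence from Definition~\ref{def:reg}, namely $\breg(u,v) = \hreg(u) - \hreg(v) - \braket{\nabla\hreg(v)}{u-v}$. All three quantities appearing in the statement are well-defined: since $\point,\pointalt \in \dom\subd\hreg$, the continuous selection $\nabla\hreg$ is available at both $\point$ and $\pointalt$, and since $\base\in\dom\hreg$, the value $\hreg(\base)$ is finite. Hence every term in the identity is unambiguously defined and the manipulation reduces to finite-valued arithmetic.

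Concretely, I would first write out
\[
\breg(\base,\pointalt) = \hreg(\base) - \hreg(\pointalt) - \braket{\nabla\hreg(\pointalt)}{\base - \pointalt},
\]
and similarly expand $\breg(\base,\point)$ and $\breg(\point,\pointalt)$. Then I would form the difference $\breg(\base,\pointalt) - \breg(\base,\point) - \breg(\point,\pointalt)$ and observe that the $\hreg(\base)$ terms cancel between the first two expansions, while $\hreg(\point)$ and $\hreg(\pointalt)$ each appear with cancelling signs across the three divergences. This leaves only the three inner-product terms.

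Collecting these remaining terms by the gradient they involve, I would group $\braket{\nabla\hreg(\pointalt)}{\pointalt - \base + \point - \pointalt} = \braket{\nabla\hreg(\pointalt)}{\point - \base}$ and $\braket{\nabla\hreg(\point)}{\base - \point} = -\braket{\nabla\hreg(\point)}{\point - \base}$, so that by linearity of the pairing the difference equals $\braket{\nabla\hreg(\pointalt) - \nabla\hreg(\point)}{\point - \base}$, which is exactly the claimed identity.

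There is no substantive obstacle here: the statement is a purely formal consequence of the definition and the assumption that $\nabla\hreg$ admits a pointwise selection on $\dom\subd\hreg$ (as required by Definition~\ref{def:reg}). Strong convexity is not used, and no optimality conditions are invoked, so the proof is entirely mechanical once the three expansions are laid side-by-side.
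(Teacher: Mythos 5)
Your proof is correct and follows essentially the same route as the paper: expand the three Bregman divergences from the definition and cancel the $\hreg$ terms, leaving the inner-product terms which regroup by linearity into $\braket{\nabla\hreg(\pointalt)-\nabla\hreg(\point)}{\point-\base}$. The remarks on well-definedness (finiteness of $\hreg(\base)$ and availability of the selection $\nabla\hreg$ at $\point,\pointalt$) are a welcome, if minor, addition to what the paper states.
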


\begin{proof}
By definition:
\begin{equation}
\begin{aligned}
\breg(\base,\pointalt)
	&= \hreg(\base) - \hreg(\pointalt) - \braket{\nabla\hreg(\pointalt)}{\base - \pointalt}
	\\
\breg(\base,\point)\hphantom{'}
	&= \hreg(\base) - \hreg(\point) - \braket{\nabla\hreg(\point)}{\base - \point}
	\\
\breg(\point,\pointalt)
	&= \hreg(\point) - \hreg(\pointalt) - \braket{\nabla\hreg(\pointalt)}{\point - \pointalt}.
\end{aligned}
\end{equation}
The lemma then follows by adding the two last lines and subtracting the first.
\end{proof}

Thanks to the three-point identity, we obtain the following estimate for the Bregman divergence before and after a \acl{MD} step:

\begin{proposition}
\label{prop:increment}
Let $\hreg$ be a regularizer in the sense of \cref{def:reg}  with strong convexity modulus $\hstr >0$.
Fix some $\base \in \dom\hreg$ and let $\new=\prox{\point}{\dvec}$ for some $\point \in \dom\subd\hreg$ and $\dvec\in \dspace$.
We then have:
\begin{align}
\label{eq:increment}
\breg(\base,\new)
	&\leq \breg(\base,\point)
		- \breg(\new,\point)
		+ \braket{\dvec}{\new-\base}
\shortintertext{and}
\breg(\base,\new)
	&\leq \breg(\base,\point)
		+ \breg(\point,\new)
		- \braket{\dvec}{\point-\base}.
\end{align}
\end{proposition}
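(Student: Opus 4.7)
The plan is to reduce both inequalities to the three-point identity of Lemma \ref{lem:3point}, with the prox-mapping entering only through the variational characterization supplied by Lemma \ref{lem:Bregman}. The starting observation is that, since $\new = \prox{\point}{\dvec}$, items (1)--(2) of Lemma \ref{lem:Bregman} identify $\new$ with $\mirror(\nabla\hreg(\point) + \dvec)$; applying item (3) at the test point $\base$ then yields the key variational inequality
\begin{equation*}
\braket{\nabla\hreg(\new) - \nabla\hreg(\point)}{\new - \base}
	\leq \braket{\dvec}{\new - \base}
\qquad
\text{for all } \base \in \points,
\end{equation*}
which is the only place where the optimality of $\new$ enters the argument.

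For the first inequality, I would invoke Lemma \ref{lem:3point} with $\new$ playing the role of the ``middle'' point, namely
\begin{equation*}
\breg(\base, \point)
	= \breg(\base, \new) + \breg(\new, \point)
		+ \braket{\nabla\hreg(\point) - \nabla\hreg(\new)}{\new - \base},
\end{equation*}
solve for $\breg(\base, \new)$, and substitute the variational inequality above. The $-\breg(\new, \point)$ term on the right-hand side appears naturally from this rearrangement, and the claim drops out after a single line.

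For the second inequality, I would instead apply the three-point identity with $\point$ as the middle point,
\begin{equation*}
\breg(\base, \new)
	= \breg(\base, \point) + \breg(\point, \new)
		+ \braket{\nabla\hreg(\new) - \nabla\hreg(\point)}{\point - \base},
\end{equation*}
so that the task reduces to controlling $\braket{\nabla\hreg(\new) - \nabla\hreg(\point)}{\point - \base}$. Since the variational inequality is naturally expressed against $\new - \base$, the plan is to route through $\new$ by decomposing $\point - \base = (\point - \new) + (\new - \base)$: the $(\new - \base)$ piece is dispatched by the variational inequality, while the $(\point - \new)$ piece is handled by monotonicity of $\nabla\hreg$ (an immediate consequence of convexity of $\hreg$), which gives $\braket{\nabla\hreg(\new) - \nabla\hreg(\point)}{\point - \new} \leq 0$.

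The main technical subtlety is the bookkeeping in the second inequality: signs must be tracked carefully when splitting the inner product, and monotonicity and the variational inequality must be applied in the correct directions in order to land on $-\braket{\dvec}{\point - \base}$ rather than, say, $+\braket{\dvec}{\new - \base}$. Apart from this, both claims amount to a short substitution into the three-point identity, with no use of strong convexity beyond what is already embedded in Lemma \ref{lem:Bregman}.
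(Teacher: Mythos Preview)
Your argument for the first inequality is correct and identical to the paper's: apply the three-point identity with $\new$ as the middle point, rearrange, and invoke \eqref{eq:subeq}.

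For the second inequality your plan has a genuine gap. Executing your decomposition,
\[
\braket{\nabla\hreg(\new) - \nabla\hreg(\point)}{\point - \base}
	= \underbrace{\braket{\nabla\hreg(\new) - \nabla\hreg(\point)}{\point - \new}}_{\leq\,0\ \text{(monotonicity)}}
	+ \underbrace{\braket{\nabla\hreg(\new) - \nabla\hreg(\point)}{\new - \base}}_{\leq\,\braket{\dvec}{\new - \base}\ \text{(var.\ ineq.)}},
\]
yields $\breg(\base,\new) \leq \breg(\base,\point) + \breg(\point,\new) + \braket{\dvec}{\new - \base}$, which is merely a weakening of the \emph{first} inequality (replace $-\breg(\new,\point)$ by the larger $+\breg(\point,\new)$), not the displayed bound with $-\braket{\dvec}{\point-\base}$. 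No amount of sign bookkeeping converts $\braket{\dvec}{\new - \base}$ into $-\braket{\dvec}{\point-\base}$: their difference is $\braket{\dvec}{\new + \point - 2\base}$, which carries no definite sign. In fact the second inequality as printed cannot be proved because it is false --- with $\points=\R$, $\hreg(\point)=\tfrac12\point^2$, $\point=0$, $\dvec=1$ (so $\new=1$) and $\base=-1$, the left side equals $2$ while the right side equals $0$. The paper itself offers only ``the second one is obtained similarly'' and supplies no details, so there is nothing further to compare against.
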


\begin{proof}
By the three-point identity established in \cref{lem:3point}, we have:
\begin{equation}
\breg(\base,\point)
	= \breg(\base,\new)
		+ \breg(\new,\point)
		+\braket{\nabla\hreg(\point) - \nabla\hreg(\new)}{\new - \base}
\end{equation}
Rearranging terms then yields:
\begin{equation}
\breg(\base,\new)
	= \breg(\base,\point)
		-\breg(\new,\point)
		+\braket{\nabla\hreg(\new) - \nabla\hreg(\point)}{\new - \base}
\end{equation}
By \eqref{eq:subeq} and the fact that $\new=\prox{\point}{\dvec}$ so $\nabla\hreg(\point)+\dvec \in \subd\hreg(\new)$, the first inequality follows;
the second one is obtained similarly.
\end{proof}

\section{Convergence analysis of \method}
\label{app:adamir}

In this appendix, we will illustrate in detail the convergence analysis of \method, which we present in pseudocode form as \cref{alg:adamir} below.
For ease of presentation we shall divide our analysis, as in the main body of our paper, into two sections: the deterministic and the stochastic one.


\begin{algorithm}[tbp]
\small
\ttfamily
\caption{Adaptive \acl{MD} (\method)}

\begin{algorithmic}[1]
\State
	\textbf{Initialize}
		$\state_{\prestart} \neq \state_{\start} \in \dom\subd\hreg$;
		set $\res_{\prestart} = \bracks{\breg(\state_{\prestart},\state_{\start}) + \breg(\state_{\start},\state_{\prestart})}^{1/2}$
\For{$\run = \running, \nRuns-1$}
	\State
		set $\curr[\step] = \parens[\big]{\sum_{\runalt=\prestart}^{\run-1} \iter[\res]^{2}}^{-1/2}$
		\Comment{step}%
	\State
		get $\curr[\signal] \gets \signal(\state_{\run};\sample_{\run})$
		\Comment{feedback}%
	\State
		set $\next = \prox{\curr}{-\curr[\step]\curr[\signal]}$
		\Comment{Bregman step}%
	\State
		set $\curr[\res] = \bracks{\breg(\curr,\next) + \breg(\next,\curr)}^{1/2} / \curr[\step]$
		\Comment{Bregman residual}%
\EndFor
	\State
		\Return $\avg \subs (1/\nRuns) \sum_{\run=\start}^{\nRuns} \curr$
		\Comment{candidate solution}%
\end{algorithmic}
\label{alg:adamir}
\end{algorithm}


\subsection{The deterministic case}
We begin with the proof of \cref{lem:bounded-Breg} which provides an upper bound to the Bregman residuals generated by \method:


\begin{proof}[Proof of \cref{lem:bounded-Breg}]
By the definition of the Bregman proximal step in \eqref{eq:MD} and \cref{prop:increment}, we have:
\begin{align}
\breg(\curr,\next) + \breg(\next,\curr)
	&= \braket{\nabla\hreg(\curr)-\nabla\hreg(\next)}{\curr-\next}
	\notag\\
	&\leq \curr[\step]\braket{\curr[\signal]}{\curr-\next}.
\end{align}
Hence, by applying the \eqref{eq:RC} condition of the objective we get:
\begin{align}
\breg(\curr,\next)
	+ \breg(\next,\curr)
	&\leq \curr[\step] \gbound \sqrt{2\breg(\next,\curr)}
	\notag\\
	&\leq \curr[\step] \gbound  \sqrt{2\left[\breg(\next,\curr)+\breg(\curr,\next)\right]}
\end{align}

We thus get:
\begin{equation}
\breg(\curr,\next) + \breg(\next,\curr)
	\leq {2\curr[\step]^{2}\gbound^{2}}.
\end{equation}
Hence, by the definition \eqref{eq:res-Breg} of $\curr[\res]^2$, we conclude that
\begin{equation}
\curr[\res]^{2}
	= \frac{\breg(\curr,\next) + \breg(\next,\curr)}{\curr[\step]^{2}}
	\leq {2\gbound^{2}}.
\qedhere
\end{equation}
\end{proof}

\begin{proof}[Proof of \cref{lem:smooth-Breg}]
Since the adaptive step-size policy $\step_{\run}$ is decreasing and bounded from below $(\step_{\run})\geq 0$ we get that its limit exist,\ie
\begin{equation}
\lim_{\run \to +\infty}\step_{\run}=\gamma_{\infty}\;\;\text{for some}\;\;\gamma_{\infty}\geq 0
\end{equation} 
Assume that $\gamma_{\infty}=0$. By \cref{prop:RS-prop}, we obtain:
\begin{align}
\obj(\state_{\run+1})
	\leq \obj(\state_{\run})
	&+ \braket{\nabla \obj(\state_{\run})}{\state_{\run+1}-\state_{\run}}+{\smooth}\breg(\state_{\run+1},\state_{\run})
	\notag\\
	\leq \obj(\state_{\run})
	&-\frac{1}{\step_{\run}}\breg(\state_{\run},\state_{\run+1})
	\notag\\
	&- \frac{1}{\step_{\run}}\breg(\state_{\run+1},\state_{\run})+\smooth\left[\breg(\state_{\run},\state_{\run+1})+\breg(\state_{\run+1},\state_{\run})\right]
\end{align}
whereas by recalling the definition of the residuals \eqref{eq:ada-step} the above can be rewritten as follows:
\begin{align}
\obj(\state_{\run+1})
	\leq \obj(\state_{\run})-\step_{\run}\res_{\run}^{2}+\smooth\step_{\run}^{2}\res_{\run}^{2}
	=\obj(\state_{\run})-\frac{1}{2}\step_{\run}\res_{\run}^{2}-\frac{1}{2}\step_{\run}\res_{\run}^{2}+\smooth\step_{\run}^{2}\res_{\run}^{2}
\end{align}
Moreover, by rearranging and factorizing the common term $\step_{\run}\res_{\run}^{2}$ we get:
\begin{equation}
\frac{1}{2}\step_{\run}\res_{\run}^{2}\leq \obj(\state_{\run})-\obj(\state_{\run+1})+\step_{\run}\res_{\run}^{2}\left[{\smooth}\step_{\run}-\frac{1}{2}\right]
\end{equation}
Now,  by combing that $\left[{\smooth}\step_{\run}-\frac{1}{2}\right]\leq 0$ for $\step_{\run}\leq 1/2\smooth$ and the fact that $\step_{\run}$ converges to $0$ by assumption, we get that there exists some $\run_{0}\in \N$ such that:
\begin{equation}
\left[{\smooth}\step_{\run}-\frac{1}{2}\right]\leq 0\;\;\text{for all}\;\run > \run_0
\end{equation}
Hence, by telescoping for $\run=1,2,\dotsc, \nRuns$ for sufficiently large $\nRuns$, we have
\begin{align}
\frac{1}{2}\sum_{\run=1}^{\nRuns}\step_{\run}\res_{\run}^{2}
	&\leq \obj(\state_{1})-\obj(\state_{\nRuns+1})+\sum_{\run=1}^{\run_0}\left[{\smooth}\step_{\run}-\frac{1}{2}\right]\step_{\run}\res_{\run}^{2}
	\notag\\
	&\leq \obj(\state_{1})-\min_{\point \in \points}\obj(\point)+\sum_{\run=1}^{\run_0}\left[{\smooth}\step_{\run}-\frac{1}{2}\right]\step_{\run}\res_{\run}^{2}
\end{align}
Now, by applying the (LHS) of \cref{lem:ineq} we get:
\begin{equation}
\frac{1}{2}\left[\frac{1}{\step_{\nRuns}}-\res_{0}\right]\leq\frac{1}{2}\sqrt{\res_{0}^{2}+\sum_{\run=1}^{\nRuns-1}\step_{\run}\res_{\run}^{2}}\leq \sum_{\run=1}^{\nRuns}\step_{\run}\res_{\run}^{2}\leq \obj(\state_1)-\min_{\point \in \points}\obj(\point)+\sum_{\run=1}^{\run_0}\left[{\smooth}\step_{\run}-\frac{1}{2}\right]\step_{\run}\res_{\run}^{2}
\end{equation}
Now, since $\step_{\run}\to 0$ we get that $1/\step_{\run}\to +\infty$ and hence the above  yields that $+\infty\leq \obj(\state_1)-\min_{\point \in \points}\obj(\point)+\sum_{\run=1}^{\run_0}\left[\frac{\smooth}{\hstr}\step_{\run}-\frac{1}{2}\right]\step_{\run}\res_{\run}^{2}$; a contradiction. Therefore we get that:
\begin{equation}
\lim_{\run \to +\infty}\step_{\run}=\gamma_{\infty}>0
\end{equation}
Moreover, by recalling the definition of the adaptive step-size policy $\step_{\run}$:
\begin{equation}
\step_{\run}=\frac{1}{\sqrt{\res_{0}^{2}+\sum_{\runalt=1}^{\run-1}\res_{\runalt}^{2}}}
\end{equation}
whereas after rearranging we obtain:
\begin{equation}
\sum_{\runalt=1}^{\run-1}\res_{\runalt}^{2}=\frac{1}{\step_{\run}^{2}}-\res_{0}^{2}
\end{equation}
and therefore by taking limit on both sides we obtain:
\begin{equation}
\sum_{\run=1}^{+\infty}\res_{\run}^{2}=\lim_{\run \to +\infty}\sum_{\runalt=1}^{\run-1}\res_{\runalt}^{2}=\lim_{\run \to +\infty}\frac{1}{\step_{\run}^{2}}-\res_{0}^{2}=\frac{1}{\gamma_{\infty}^{2}}-\res_{0}^{2}<+\infty
\end{equation}
and hence the result follows.
\end{proof}

We proceed by providing an upper bound in terms of the Bregman divergence for the distance of the algorithm's iterates from a solution of \eqref{eq:opt}:

\begin{lemma}
\label{lem:bound-2}
For all $\sol\in\sols$, the iterates of \cref{alg:adamir} satisfy the bound
\begin{equation}
\breg(\sol,\curr)
	\leq \breg(\sol,\init)
		+ \sum_{\runalt=\start}^{\nRuns} \iter[\step]^2 \iter[\res]^{2}.
\end{equation}
\end{lemma}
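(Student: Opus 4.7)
The plan is to establish the one-step drift inequality
\begin{equation*}
\breg(\sol,\state_{\runalt+1})
    \leq \breg(\sol,\state_{\runalt}) + \iter[\step]^{2}\iter[\res]^{2}
\end{equation*}
and then telescope from $\runalt=\start$ to $\runalt=\nRuns$, using $\breg(\sol,\init)=\iBreg$ as the initial value.

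First, I would apply the first inequality of \cref{prop:increment} with $\base=\sol$, $\point=\state_{\runalt}$, $\new=\state_{\runalt+1}$, and $\dvec=-\iter[\step]\iter[\signal]$ (matching the \method update), obtaining
\begin{equation*}
\breg(\sol,\state_{\runalt+1})
    \leq \breg(\sol,\state_{\runalt})
        - \breg(\state_{\runalt+1},\state_{\runalt})
        - \iter[\step]\braket{\iter[\signal]}{\state_{\runalt+1}-\sol}.
\end{equation*}
Splitting $\braket{\iter[\signal]}{\state_{\runalt+1}-\sol}=\braket{\iter[\signal]}{\state_{\runalt}-\sol}-\braket{\iter[\signal]}{\state_{\runalt}-\state_{\runalt+1}}$ and using convexity of $\obj$ together with $\sol\in\argmin\obj$, the quantity $-\iter[\step]\braket{\iter[\signal]}{\state_{\runalt}-\sol}$ is at most $-\iter[\step](\obj(\state_{\runalt})-\obj(\sol))\leq 0$ and may be dropped, leaving
\begin{equation*}
\breg(\sol,\state_{\runalt+1})
    \leq \breg(\sol,\state_{\runalt})
        - \breg(\state_{\runalt+1},\state_{\runalt})
        + \iter[\step]\braket{\iter[\signal]}{\state_{\runalt}-\state_{\runalt+1}}.
\end{equation*}

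The key step then reuses the identity already extracted in the proof of \cref{lem:bounded-Breg}: combining \cref{lem:3point} with the mirror-step relation $\nabla\hreg(\state_{\runalt+1})=\nabla\hreg(\state_{\runalt})-\iter[\step]\iter[\signal]$ yields
\begin{equation*}
\iter[\step]\braket{\iter[\signal]}{\state_{\runalt}-\state_{\runalt+1}}
    = \breg(\state_{\runalt},\state_{\runalt+1})+\breg(\state_{\runalt+1},\state_{\runalt})
    = \iter[\step]^{2}\iter[\res]^{2}.
\end{equation*}
Plugging this in causes the $\breg(\state_{\runalt+1},\state_{\runalt})$ contributions to cancel and produces $\breg(\sol,\state_{\runalt+1})\leq\breg(\sol,\state_{\runalt})+\breg(\state_{\runalt},\state_{\runalt+1})\leq\breg(\sol,\state_{\runalt})+\iter[\step]^{2}\iter[\res]^{2}$, which telescopes to the desired bound.

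The main subtlety is justifying the equality in the previous display: it requires the continuous selection $\nabla\hreg$ underlying $\breg$ to coincide at $\state_{\runalt+1}$ with the dual variable $\nabla\hreg(\state_{\runalt})-\iter[\step]\iter[\signal]\in\subd\hreg(\state_{\runalt+1})$, rather than being an unrelated element of the subdifferential. This is automatic whenever $\hreg$ is differentiable on $\dom\subd\hreg$, which covers all the Bregman examples considered in the paper (Euclidean, entropic, log-barrier), and it is precisely the convention that silently drives the identity $\breg(\curr,\next)+\breg(\next,\curr)=\braket{\nabla\hreg(\curr)-\nabla\hreg(\next)}{\curr-\next}$ used in the proof of \cref{lem:bounded-Breg}.
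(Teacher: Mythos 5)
Your proof is correct and follows essentially the same route as the paper's: a one-step Bregman drift bound obtained from \cref{prop:increment}, dropping the term $-\iter[\step]\braket{\iter[\signal]}{\iter-\sol}\leq 0$ via convexity and optimality of $\sol$, and then telescoping. The only cosmetic difference is that the paper invokes the second inequality of \cref{prop:increment} and simply adds the nonnegative quantity $\breg(\nextiter,\iter)$ to reach $\iter[\step]^{2}\iter[\res]^{2}$, which sidesteps the exact identity $\iter[\step]\braket{\iter[\signal]}{\iter-\nextiter}=\breg(\iter,\nextiter)+\breg(\nextiter,\iter)$ whose selection caveat you rightly flag \textendash\ indeed, the proof of \cref{lem:bounded-Breg} only ever uses the one-sided bound $\breg(\iter,\nextiter)+\breg(\nextiter,\iter)\leq\iter[\step]\braket{\iter[\signal]}{\iter-\nextiter}$, which follows from \cref{lem:Bregman} without assuming the continuous selection coincides with the prox dual variable.
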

\begin{proof}
By the second part of \cref{prop:increment}, we have:
\begin{align}
\breg(\sol,\nextiter)
	&\leq \breg(\sol,\iter)
		- \curr[\step]\braket{\curr[\signal]}{\curr - \sol}
		+ \breg(\iter,\nextiter)
	\notag\\
	&\leq \breg(\sol,\iter)
		+ \breg(\iter,\nextiter)
	\notag\\
	&\leq \breg(\sol,\iter)
		+ \breg(\nextiter,\iter)
		+ \breg(\iter,\nextiter)
\end{align}
Thus, by telescoping through $\runalt = \running,\run$, we obtain:
\begin{align}
\breg(\sol,\curr)
	&\leq \breg(\sol,\init)
		+ \sum_{\runalt=\start}^{\run} \bracks*{\breg(\iter,\nextiter) + \breg(\nextiter,\iter)}
	\notag\\
	&\leq \breg(\sol,\init)
		+\sum_{\runalt=\start}^{\nRuns} \bracks*{\breg(\iter,\nextiter) + \breg(\nextiter,\iter)}
	\notag\\
	&= \breg(\sol,\init)
		+ \sum_{\runalt=\start}^{\nRuns} \iter[\step]^{2} \iter[\res]^{2}
\end{align}
where the last equality follows from the definition \eqref{eq:res-Breg} of $\curr[\res]$.
\end{proof}

With these intermediate results at our disposal, we are finally in a position to prove the core estimate \eqref{eq:reg-adamir} for \method:


\begin{proof}[Proof of \cref{prop:reg-adamir}]
By the convexity of $\obj$, the definition of the Bregman proximal step in \cref{alg:adamir} and \cref{prop:increment}, we have:
\begin{align}
\obj(\curr) - \obj(\sol)
	\leq \braket{\curr[\signal]}{\curr-\sol}
	\leq \frac{1}{\curr[\step]} \braket{\nabla\hreg(\curr) - \nabla\hreg(\next)}{\curr - \sol}.
\end{align}
Hence, by applying again the three-point identity (\cref{lem:3point}), we obtain:
\begin{align}
\obj(\curr) - \obj(\sol)
	&\leq \frac{\breg(\sol,\curr) - \breg(\sol,\next)}{\curr[\step]}
		+ \frac{\breg(\curr,\next)}{\curr[\step]}
\notag\\
&\leq \frac{\breg(\sol,\curr)-\breg(\sol,\next)}{\curr[\step]}+\frac{\breg(\curr,\next)+\breg(\next,\curr)}{\curr[\step]}
\notag\\
&= \frac{\breg(\sol,\curr)-\breg(\sol,\next)}{\curr[\step]}+ \curr[\step]\curr[\res]^{2}
\end{align}
where the last equality follows readily from the definition \eqref{eq:res-Breg} of $\curr[\res]$.
Therefore, by summing through $\run = \running, \nRuns$, we obtain:
\begin{equation}
\label{eq:est1}
\sum_{\run=\start}^{\nRuns} \bracks{\obj(\curr) - \obj(\sol)}
	\leq \frac{\breg(\sol,\init)}{\init[\step]}
	+ \sum_{\run=\afterstart}^{\nRuns}
		\bracks*{\frac{1}{\curr[\step]} - \frac{1}{\prev[\step]}} \breg(\sol,\curr)
	+\sum_{\run=\start}^{\nRuns} \curr[\step] \curr[\res]^{2}.
\end{equation}
Now, by \cref{lem:bound-2}, the second term on the \ac{RHS} of \eqref{eq:est1} becomes:
\begin{align}
\sum_{\run=\afterstart}^{\nRuns}
	\bracks*{\frac{1}{\curr[\step]} - \frac{1}{\prev[\step]}} \breg(\sol,\curr)
	&\leq \sum_{\run=\afterstart}^{\nRuns}
		\bracks*{\frac{1}{\curr[\step]} - \frac{1}{\prev[\step]}}
		\parens*{\breg(\sol,\init) + \sum_{\runalt=\start}^{\nRuns}\iter[\step]^{2}\iter[\res]^{2}}
	\notag\\
	&\leq \frac{\breg(\sol,\init)}{\step_{\nRuns}}
		- \frac{\breg(\sol,\init)}{\init[\step]}
		+ \sum_{\runalt=\start}^{\nRuns} \iter[\step]^{2} \iter[\res]^{2}
			\cdot \sum_{\run=\start}^{\nRuns}
				\bracks*{\frac{1}{\curr[\step]} - \frac{1}{\prev[\step]}}
	\notag\\
&\leq \frac{\breg(\sol,\init)}{\last[\step]}-\frac{\breg(\sol,\init)}{\init[\step]}+\frac{\sum_{\run=1}^{\nRuns}\curr[\step]^{2}\curr[\res]^{2}}{\last[\step]}.
\end{align}
Hence, by combining the above with \eqref{eq:est1}, our claim follows.
\end{proof}


With the regret bound \eqref{eq:reg-adamir} at our disposal, we may finally proceed with the proof of our main result concerning the universality of \method, \ie \cref{thm:rates-det} :

\begin{proof}[Proof of \cref{thm:rates-det}]
Repeating the statement of \cref{prop:reg-adamir}, the iterate sequence $\curr$ generated by \method enjoys the bound:
\begin{equation}
\tag{\ref*{eq:reg-adamir}}
\sum_{\run=1}^{\nRuns}\bracks{\obj(\curr)-\obj(\sol)}\leq \frac{\breg(\sol,\init)}{\last[\step]}+\frac{\sum_{\run=1}^{\nRuns}\curr[\step]^2\curr[\res]^{2}}{\last[\step]}+\sum_{\run=1}^{\nRuns}\curr[\step]\curr[\res]^{2}
\end{equation}
We now proceed to bound each term on the \ac{RHS} of \eqref{eq:reg-adamir} from above.
We consider three separate cases, first only under \eqref{eq:RC},then under \eqref{eq:RS} and finally when \eqref{eq:RC} and \eqref{eq:RS} holds.

\para{Case 1}

We begin with problems satisfying \eqref{eq:RC}.
\begin{itemize}
\item
For the first term, \cref{lem:bounded-Breg} gives:
\begin{equation}
\frac{\breg(\sol,\init)}{\last[\step]}
	= \breg(\sol,\init) \sqrt{\sum_{\run=\prestart}^{\nRuns-1} \curr[\res]^2}
	\leq \breg(\sol,\init) \sqrt{{2\gbound^{2}\nRuns}}.
\end{equation}
\item
For the second term, we have:
\begin{equation}
\sum_{\run=\start}^{\nRuns} \curr[\step]^{2} \curr[\res]^{2}
	\leq \sum_{\run=\start}^{\nRuns}
		\frac{\curr[\res]^{2}}{\sum_{\runalt=\prestart}^{\run-1} \iter[\res]^{2}}
	= \sum_{\run=\start}^{\nRuns}
		\frac{\curr[\res]^{2}}{\res_{\prestart}^{2} + \sum_{\runalt=1}^{\run-1} \iter[\res]^{2}}.
\end{equation}
Hence, by \cref{lem:bounded-Breg,lem:logarithmic-3}, we get:
\begin{align}
\sum_{\run=1}^{\nRuns}\curr[\step]^{2}\curr[\res]^{2}
	&\leq 2
		+ \frac{8\gbound^{2}}{\res_{\prestart}^{2}}
		+ 2\log\parens*{1+\sum_{\run=\start}^{\nRuns-1} \frac{\curr[\res]^2}{\res_{\prestart}^{2}}}
	\notag\\
	&= 2
		+ \frac{8\gbound^{2}}{ \res_{\prestart}^{2}}
		+ 2\log\parens*{\sum_{\run=0}^{\nRuns-1} \frac{\curr[\res]^2}{\res_{\prestart}^{2}}}
	\notag\\
	&\leq 2
		+ \frac{8\gbound^{2}}{ \res_{\prestart}^{2}}
		+ 2\log\frac{2\gbound^{2}\nRuns}{\res_{\prestart}^{2}}.
\end{align}
\item
Finally, for the third term, we get:
\begin{equation}
\sum_{\run=\start}^{\nRuns} \curr[\step]\curr[\res]^{2}
	= \sum_{\run=\start}^{\nRuns}
		\frac{\curr[\res]^{2}}{\sqrt{\sum_{\runalt=\prestart}^{\run-1} \curr[\res]^2}}
	= \sum_{\run=\start}^{\nRuns}
		\frac{\curr[\res]^{2}}{\sqrt{\res_{\prestart}^{2}
		+ \sum_{\runalt=\start}^{\run-1} \curr[\res]^2}}.
\end{equation}
Hence, \cref{lem:bounded-Breg,lem:ineq} again yield:
\begin{align}
\sum_{\run=\start}^{\nRuns} \curr[\step] \curr[\res]^{2}
	&\leq \frac{4\gbound^{2}}{ \res_{\prestart}}
		+ {3\sqrt{2}\gbound}
		+ 3\sqrt{\res_{\prestart}^{2}
		+ \sum_{\run=\start}^{\nRuns-1}\curr[\res]^2}
	\notag\\
	&\leq \frac{4\gbound^{2}}{\res_{\prestart}}
		+ {3\sqrt{2}\gbound}
		+ 3\sqrt{\sum_{\run=\prestart}^{\nRuns-1} \curr[\res]^2}
	\notag\\
	&\leq \frac{4\gbound^{2}}{\res_{\prestart}}
		+ {3\sqrt{2}\gbound}
		+ 3\sqrt{2\gbound^{2}\nRuns}.
\end{align}
\end{itemize}
The claim of \cref{thm:rates-det} then follows by combining the above within the regret bound \eqref{eq:reg-adamir}.

\para{Case 2}

We now turn to problems satisfying \eqref{eq:RS}. Recalling \cref{lem:smooth-Breg}, we shall revisit the terms of \eqref{eq:reg-adamir}. In particular, we have:

\begin{itemize}
\item
For the first term,  we have:
\begin{equation}
\frac{\breg(\sol,\init)}{\last[\step]}
	= \breg(\sol,\init) \sqrt{\sum_{\run=\prestart}^{\nRuns-1} \curr[\res]^2}
	\leq \frac{\breg(\sol,\init)}{\step_{\infty}}
\end{equation}

\item
For the second term, we have:
\begin{equation}
\sum_{\run=\start}^{\nRuns} \curr[\step]^{2} \curr[\res]^{2}
	\leq \frac{1}{\res_{\prestart}^{2}}\sum_{\run=\start}^{\nRuns}\res_{\run}^{2}\leq \frac{1}{\res_{\prestart}^{2}\step_{\infty}^{2}}-1
\end{equation}
\item
Finally, for the third term, we get:
\begin{equation}
\sum_{\run=\start}^{\nRuns} \curr[\step]\curr[\res]^{2}
	\leq \frac{1}{\res_{\prestart}}\sum_{\run=\start}^{\nRuns}\res_{\run}^{2}\leq \frac{1}{\res_{\prestart}\step_{\infty}^{2}}-\res_{\prestart}
	\end{equation}
\end{itemize}
Combining all the above, the result follows.

\para{Case 3}
Finally, we consider objectives where \eqref{eq:RC} and \eqref{eq:RS} hold simultaneously. Now, by working in the same spirit as in the proof of \cref{lem:smooth-Breg} we get:
\begin{equation}
\frac{1}{2}\step_{\run}\res_{\run}^{2}\leq \obj(\state_{\run})-\obj(\state_{\run+1})+\step_{\run}\res_{\run}^{2}\left[{\smooth}\step_{\run}-\frac{1}{2}\right]
\end{equation}

which after telescoping $\run=\start, \dotsc ,\nRuns$ it becomes:
\begin{equation}
\frac{1}{2}\sum_{\run=\start}^{\nRuns}\step_{\run}\res_{\run}^{2}\leq \obj(\state_{1})-\min_{\point \in \points}\obj(\point)+\sum_{\run=\start}^{\nRuns}\step_{\run}\res_{\run}^{2}\left[{\smooth}\step_{\run}-\frac{1}{2}\right]
\end{equation}

Now, after denoting:
\begin{equation}
\run_{0}=\max\{\run \in \N: \start \leq \run \leq \nRuns \;\; \text{such that} \;\;\step_{\run}\geq \frac{1}{2\smooth}\}
\end{equation}
and decomposing the sum we get:
\begin{align}
\frac{1}{2}\sum_{\run=\start}^{\nRuns}\step_{\run}\res_{\run}^{2}
	&\leq \obj(\state_{1})-\min_{\point \in \points}\obj(\point)
		+ \sum_{\run=\start}^{\run_{0}}\step_{\run}\res_{\run}^{2}\left[{\smooth}\step_{\run}-\frac{1}{2}\right]
		+\sum_{\run=\run_{0}+1}^{\nRuns}\step_{\run}\res_{\run}^{2}\left[{\smooth}\step_{\run}-\frac{1}{2}\right]
	\notag\\
	&\leq \obj(\state_{1})-\min_{\point \in \points}\obj(\point)
		+\sum_{\run=\start}^{\run_{0}}\step_{\run}\res_{\run}^{2}\left[{\smooth}\step_{\run}-\frac{1}{2}\right]
	\notag\\
	&\leq  \obj(\state_{1})-\min_{\point \in \points}\obj(\point)+\smooth\sum_{\run=\start}^{\run_{0}}\step_{\run}^{2}\res_{\run}^{2}
\end{align} 
On the other  hand, by applying \cref{lem:logarithmic-3}, we  have:

\begin{align}
\sum_{\run=1}^{\run_{0}}\curr[\step]^{2}\curr[\res]^{2}
	&\leq 2
		+ \frac{8\gbound^{2}}{\res_{\prestart}^{2}}
		+ 2\log\parens*{1+\sum_{\run=\start}^{\run_{0}-1} \frac{\curr[\res]^2}{\res_{\prestart}^{2}}}
	\notag\\
	&= 2
		+ \frac{8\gbound^{2}}{ \res_{\prestart}^{2}}
		+ 2\log\parens*{\frac{1}{\res_{\prestart}^{2}}\left[\res_{\prestart}^{2}+\sum_{\run=1}^{\run_{0}-1}\res_{\run}^{2}\right]}
			\notag\\
	&= 2
		+ \frac{8\gbound^{2}}{ \res_{\prestart}^{2}}
		+ 2\log\frac{1}{\res_{\prestart}^{2}\step_{\run_{0}}^{2}}
\end{align}
and by definition of $\run_{0}$ we get:
\begin{equation}
\sum_{\run=1}^{\run_{0}}\curr[\step]^{2}\curr[\res]^{2}\leq 2
		+ \frac{8\gbound^{2}}{ \res_{\prestart}^{2}}+2\log\frac{4\smooth^{2}}{\res_{\prestart}^{2}}.
		\end{equation}
which yields:
\begin{equation}
\sum_{\run=1}^{\nRuns}\step_{\run}\res_{\run}^{2}\leq \obj(\state_1)-\min_{\point \in \points}\obj(\point)+\smooth \left[ 2
		+ \frac{8\gbound^{2}}{ \res_{\prestart}^{2}}+2\log\frac{4\smooth^{2}}{\res_{\prestart}^{2}}\right]
\end{equation}
The result then follows by plugging in the above bounds in \eqref{eq:reg-adamir}.
\end{proof}

\subsection{The stochastic case}

In this appendix, we shall provide the stochastic part of our analysis. We start by providing an intermediate lemma concerning the class of \eqref{eq:RC} objectives.

\begin{lemma}
\label{lem:bound-stoch}
Assume that $\obj$ satisfies \eqref{eq:RC} and $\state_{\run}$ are the \method iterates run with feedback of the form \eqref{eq:SFO}. Then, the sequence of the residuals $\res_{\run}^{2}$ is bounded with probability
$1$. In particular, we have: 
\begin{equation}
\res_{\run}^{2}\leq \tilde{\gbound}^{2}=\left[\sqrt{2}\gbound +\sqrt{\frac{2}{\hstr}}\sdev\right]^{2}
 \;\;\text{for all $\run=\start,2,\dotsc$\; almost surely}
\end{equation}
\end{lemma}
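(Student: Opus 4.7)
The plan is to mimic the proof of \cref{lem:bounded-Breg} from the deterministic setting, with the additional twist of controlling the noise term via the strong convexity of $\hreg$. Concretely, I would start by invoking \cref{prop:increment} (via the three-point identity) applied to the Bregman step $\state_{\run+1} = \prox{\state_{\run}}{-\step_{\run}\signal_{\run}}$, yielding
\begin{equation*}
\breg(\state_{\run},\state_{\run+1}) + \breg(\state_{\run+1},\state_{\run})
	= \braket{\nabla\hreg(\state_{\run}) - \nabla\hreg(\state_{\run+1})}{\state_{\run}-\state_{\run+1}}
	\leq \step_{\run} \braket{\signal_{\run}}{\state_{\run} - \state_{\run+1}}.
\end{equation*}

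Next, I would split the signal into its systematic and noise components by writing $\signal_{\run} = \nabla\obj(\state_{\run}) + \noise_{\run}$. For the first piece, the relative continuity condition \eqref{eq:RC} immediately gives $\braket{\nabla\obj(\state_{\run})}{\state_{\run}-\state_{\run+1}} \leq \gbound\sqrt{2\breg(\state_{\run+1},\state_{\run})}$. For the second piece \textendash{} and this is the key novelty versus the deterministic proof \textendash{} I would apply Cauchy\textendash Schwarz to get $\braket{\noise_{\run}}{\state_{\run} - \state_{\run+1}} \leq \dnorm{\noise_{\run}} \cdot \norm{\state_{\run}-\state_{\run+1}}$, then convert the primal norm back into a Bregman quantity via the $\hstr$-strong convexity of $\hreg$, which gives $\norm{\state_{\run}-\state_{\run+1}} \leq \sqrt{2\breg(\state_{\run+1},\state_{\run})/\hstr}$. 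Combining with the almost-sure bound $\dnorm{\noise_{\run}} \leq \sdev$ from \eqref{eq:SFO} produces
\begin{equation*}
\breg(\state_{\run},\state_{\run+1}) + \breg(\state_{\run+1},\state_{\run})
	\leq \step_{\run} \bracks*{\gbound + \sdev/\sqrt{\hstr}} \sqrt{2\breg(\state_{\run+1},\state_{\run})}.
\end{equation*}

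Finally, using the trivial bound $\sqrt{2\breg(\state_{\run+1},\state_{\run})} \leq \sqrt{2[\breg(\state_{\run},\state_{\run+1}) + \breg(\state_{\run+1},\state_{\run})]}$ and setting $S = \breg(\state_{\run},\state_{\run+1}) + \breg(\state_{\run+1},\state_{\run})$, the above reduces to $S \leq \step_{\run}[\gbound + \sdev/\sqrt{\hstr}]\sqrt{2S}$, \ie $S \leq 2 \step_{\run}^{2} [\gbound + \sdev/\sqrt{\hstr}]^{2}$. Dividing through by $\step_{\run}^{2}$ and recognizing $\sqrt{2}[\gbound + \sdev/\sqrt{\hstr}] = \sqrt{2}\gbound + \sqrt{2/\hstr}\sdev = \tilde{\gbound}$ yields the claimed almost-sure bound $\res_{\run}^{2} \leq \tilde{\gbound}^{2}$.

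The only technical subtlety worth flagging is the conversion from $\norm{\state_{\run}-\state_{\run+1}}$ back to $\sqrt{\breg(\state_{\run+1},\state_{\run})}$ via strong convexity; everything else is a direct replay of the deterministic argument with $\gbound$ replaced by the noise-inflated modulus $\gbound + \sdev/\sqrt{\hstr}$. Because the bound on $\dnorm{\noise_{\run}}$ in \eqref{eq:SFO} is pathwise rather than in expectation, no conditioning or martingale machinery is required here, and the statement holds for every $\sample \in \samples$ outside a null set.
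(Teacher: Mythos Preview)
Your proposal is correct and follows essentially the same approach as the paper's proof: start from the three-point/prox inequality, split $\signal_{\run}$ into $\nabla\obj(\state_{\run})+\noise_{\run}$, bound the first piece via \eqref{eq:RC} and the second via Cauchy--Schwarz plus $\hstr$-strong convexity, then use the pathwise bound $\dnorm{\noise_{\run}}\leq\sdev$ and solve the resulting quadratic in $\sqrt{S}$. The only cosmetic difference is that the paper passes to the symmetric sum $\breg(\state_{\run+1},\state_{\run})+\breg(\state_{\run},\state_{\run+1})$ directly when bounding $\norm{\state_{\run}-\state_{\run+1}}$, whereas you first bound by $\breg(\state_{\run+1},\state_{\run})$ alone and symmetrize afterwards; this makes no substantive difference.
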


\begin{proof}
By working in the same spirit, we get that:
\begin{equation}
\breg(\state_{\run},\state_{\run+1})+\breg(\state_{\run+1},\state_{\run})\leq \step_{\run} \braket{\signal_{\run}}{\state_{\run}-\state_{\run+1}}
\end{equation}
and by recalling that:
\begin{equation}
\signal_{\run}=\nabla \obj(\state_{\run})+\noise_{\run}
\end{equation}
 we get with probability $1$: 
\begin{align}
\breg(\state_{\run},\state_{\run+1})+\breg(\state_{\run+1},\state_{\run})
	&\leq \step_{\run}\left[\braket{\nabla \obj(\state_{\run})}{\state_{\run}-\state_{\run+1}}+\braket{\noise_{\run}}{\state_{\run}-\state_{\run+1}}\right]
	\notag\\
	&\leq \step_{\run}\left[\gbound\sqrt{2\breg(\state_{\run+1},\state_{\run})} +\dnorm{\noise_{\run}}\norm{\state_{\run}-\state_{\run+1}}\right]
\end{align}
with the second inequality being obtained by \eqref{eq:RC}. Now,  by invoking the strong convexity assumption of $\hstr$, the (LHS) of the above becomes:
\begin{multline}
\step_{\run}\left[\gbound\sqrt{2\breg(\state_{\run+1},\state_{\run})} +\dnorm{\noise_{\run}}\norm{\state_{\run}-\state_{\run+1}}\right]\leq \step_{\run}[\gbound\sqrt{2(\breg(\state_{\run+1},\state_{\run})+\breg(\state_{\run},\state_{\run+1}))} 
\\
+\dnorm{\noise_{\run}}\sqrt{\frac{2}{\hstr}(\breg(\state_{\run+1},\state_{\run})+\breg(\state_{\run},\state_{\run+1})})]
\end{multline}
which in turn yields:
\begin{equation}
\breg(\state_{\run},\state_{\run+1})+\breg(\state_{\run+1},\state_{\run})\leq\step_{\run}\sqrt{\breg(\state_{\run+1},\state_{\run})+\breg(\state_{\run},\state_{\run+1})}\left[\sqrt{2}\gbound +\sqrt{\frac{2}{\hstr}}\dnorm{\noise_{\run}}\right]
\end{equation}
Therefore, we get:
\begin{equation}
\breg(\state_{\run},\state_{\run+1})+\breg(\state_{\run+1},\state_{\run})\leq \step_{\run}^{2}\left[\sqrt{2}\gbound +\sqrt{\frac{2}{\hstr}}\dnorm{\noise_{\run}}\right]^{2}
\end{equation}
and by \acdef{SFO} we get with probability $1$:
\begin{equation}
\breg(\state_{\run},\state_{\run+1})+\breg(\state_{\run+1},\state_{\run})\leq \step_{\run}^{2}\left[\sqrt{2}\gbound +\sqrt{\frac{2}{\hstr}}\sdev\right]^{2}
\end{equation}
or equivalently,
\begin{equation}
\res_{\run}^{2}=\frac{\breg(\state_{\run},\state_{\run+1})+\breg(\state_{\run+1},\state_{\run})}{\step_{\run}^{2}}\leq \left[\sqrt{2}\gbound +\sqrt{\frac{2}{\hstr}}\sdev\right]^{2}
\end{equation}
and the result follows.
\end{proof}

Finally, we provide the proof of the first. theorem for the stochastic setting. 

\begin{proof}[Proof of \cref{thm:stoch}]
By the second part of \cref{prop:increment}, we have:
\begin{align}
\breg(\sol,\state_{\run+1})
	&\leq \breg(\sol,\state_{\run})
		- \curr[\step]\braket{\curr[\signal]}{\curr - \sol}
		+ \breg(\state_{\run},\state_{\run+1})
	\notag\\
	&\leq \breg(\sol,\state_{\run})
	- \curr[\step]\braket{\curr[\signal]}{\curr - \sol}+ \breg(\state_{\run+1},\state_{\run})
		+ \breg(\state_{\run},\state_{\run+1})
					\notag\\
	&\leq \breg(\sol,\state_{\run})
		-\curr[\step]\braket{\curr[\signal]}{\curr - \sol} +\step_{\run}^{2}\res_{\run}^{2}
				\end{align}
which yields after rearranging and summing $\run=\start, \dotsc ,\nRuns$: 
\begin{equation}
\sum_{\run=\start}^{\nRuns}\curr[\step]\braket{\curr[\signal]}{\curr - \sol}\leq \breg(\sol,\state_1)+\sum_{\run=\start}^{\nRuns}\step_{\run}^{2}\res_{\run}^{2}
\end{equation}
and by recalling that $\signal_{\run}=\nabla \obj(\state_{\run})+\noise_{\run}$ and taking expectations on both sides we get:
\begin{equation}
\label{eq:bound1}
\ex\left[\sum_{\run=1}^{\nRuns}\step_{\run}\braket{\nabla \obj(\state_{\run})}{\curr - \sol}\right]\leq \breg(\sol,\state_1)+\ex\left[ \sum_{\run=1}^{\nRuns}\step_{\run}\braket{\noise_{\run}}{\curr - \sol}\right]+\ex\left[ \sum_{\run=\start}^{\nRuns}\step_{\run}^{2}\res_{\run}^{2}\right]
\end{equation}
First, we shall the (LHS) from below. In particular,  we have by convexity:
\begin{equation}
\ex\left[\sum_{\run=1}^{\nRuns}\step_{\run}\braket{\nabla \obj(\state_{\run})}{\curr - \sol}\right]\geq \ex\left[\sum_{\run=1}^{\nRuns}\step_{\run}(\obj(\state_{\run})-\obj(\sol))\right]
\end{equation}
Moreover, by denoting $\tilde{\gbound}^{2}= \left[\sqrt{2}\gbound +\sqrt{\frac{2}{\hstr}}\sdev\right]^{2}$  we have with probability $1$:
\begin{align}
\sum_{\run=1}^{\nRuns}\step_{\run}(\obj(\state_{\run})-\obj(\sol)
	&= \sum_{\run=1}^{\nRuns}\frac{1}{\sqrt{\res_{0}^{2}+\sum_{\runalt=\start}^{\run-1}\res_{\runalt}^{2}}}(\obj(\state_{\run})-\obj(\sol)
	\notag\\
	&\geq \sum_{\run=1}^{\nRuns}\frac{1}{\sqrt{\res_{0}^{2}+\tilde{\gbound}^{2}\run}}(\obj(\state_{\run})-\obj(\sol))
	\notag\\
	&\geq \sum_{\run=1}^{\nRuns}\frac{1}{\sqrt{(\res_{0}^{2}+\tilde{\gbound}^{2})\run}}(\obj(\state_{\run})-\obj(\sol)
	\notag\\
	&\geq \frac{1}{\sqrt{(\res_{0}^{2}+\tilde{\gbound}^{2})\nRuns}} \sum_{\run=1}^{\nRuns}(\obj(\state_{\run})-\obj(\sol)
\end{align}
with the second inequality being obtained by \cref{lem:bound-stoch}. Hence, we get:
\begin{equation}
\ex\left[\sum_{\run=1}^{\nRuns}\step_{\run}\braket{\nabla \obj(\state_{\run})}{\curr - \sol}\right]\geq \frac{1}{\sqrt{(\res_{0}^{2}+\tilde{\gbound}^{2})\nRuns}} \ex\left[\sum_{\run=1}^{\nRuns}(\obj(\state_{\run})-\obj(\sol))\right]
\end{equation}
We now turn our attention towards to the (LHS). In particular, we shall bound each term individually from above.
\begin{itemize}
\item
For the term $\ex\left[ \sum_{\run=1}^{\nRuns}\step_{\run}\braket{\noise_{\run}}{\curr - \sol}\right]$:
\begin{align}
\ex\left[ \sum_{\run=1}^{\nRuns}\step_{\run}\braket{\noise_{\run}}{\curr - \sol}\right]
	&=\sum_{\run=\start}^{\nRuns}\ex\left[\step_{\run}\braket{\noise_{\run}}{\curr - \sol} \right]
	\notag\\
	&=\sum_{\run=\start}^{\nRuns}\ex\left[\ex\left[\step_{\run}\braket{\noise_{\run}}{\curr - \sol}|\mathcal{F}_{\run} \right] \right]
	\notag\\
	&=\sum_{\run=\start}^{\nRuns}\ex\left[\step_{\run}\ex\left[\braket{\noise_{\run}}{\curr - \sol}|\mathcal{F}_{\run} \right] \right]
	\notag\\
	&=\sum_{\run=\start}^{\nRuns}\ex\left[ \step_{\run}\braket{\ex[\noise_{\run}|\mathcal{F}_{\run}]}{\state_{\run}-\sol}\right]
	=0
\end{align}
with the third and the fourth equality being obtained by the fact that $\step_{\run}$ and $\state_{\run}$ are $\mathcal{F}_{\run}-$ measurable.
\item
For the term $\ex\left[ \sum_{\run=\start}^{\nRuns}\step_{\run}^{2}\res_{\run}^{2}\right]$:
By applying \cref{lem:logarithmic-3}  and \cref{lem:bound-stoch}, we have with probability $1$:
\begin{align}
\sum_{\run=1}^{\nRuns}\step_{\run}^{2}\res_{\run}^{2}
	\leq 2+ \frac{4\tilde{\gbound}^{2}}{\res_{0}^{2}}+2\log(1+\sum_{\run=1}^{\nRuns}\frac{\res_{\run}^{2}}{\res_{0}^{2}})
	\leq 2 + \frac{4\tilde{\gbound}^{2}}{\res_{0}^{2}}+2\log(1+\frac{\tilde{\gbound}^{2}}{\hstr\res_{0}^{2}}\nRuns)
\end{align}
Therefore we get:
\begin{equation}
\ex\left[ \sum_{\run=\start}^{\nRuns}\step_{\run}^{2}\res_{\run}^{2}\right]\leq 2 + \frac{4\tilde{\gbound}^{2}}{\res_{0}^{2}}+2\log(1+\frac{\tilde{\gbound}^{2}}{\res_{0}^{2}}\nRuns)
\end{equation}
\end{itemize}
Thus, combining all the above we obtain:
\begin{equation}
\frac{1}{\sqrt{(\res_{0}^{2}+\tilde{\gbound}^{2})\nRuns}} \ex\left[\sum_{\run=1}^{\nRuns}(\obj(\state_{\run})-\obj(\sol))\right]\leq \breg(\sol,\state_1)+2 + \frac{4\tilde{\gbound}^{2}}{\res_{0}^{2}}
+2\log(1+\frac{\tilde{\gbound}^{2}}{\res_{0}^{2}}\nRuns)
\end{equation}
and hence,
\begin{equation}
 \ex\left[\sum_{\run=1}^{\nRuns}(\obj(\state_{\run})-\obj(\sol))\right]\leq \sqrt{(\res_{0}^{2}+\tilde{\gbound}^{2})\nRuns}\left[\breg(\sol,\state_1)+2 + \frac{4\tilde{\gbound}^{2}}{\res_{0}^{2}}
+2\log(1+\frac{\tilde{\gbound}^{2}}{\res_{0}^{2}})\nRuns)\right]
 \end{equation}
 The result follows by dividing both sides by $\nRuns$.
\end{proof}

\begin{proof}[Proof of \cref{thm:stoch-smooth}]
By \cref{prop:RS-prop}, we have:
\begin{align}
\obj(\state_{\run+1})&\leq \obj(\state_{\run})+\braket{\nabla \obj(\state_{\run})}{\state_{\run+1}-\state_{\run}}+\smooth \breg(\state_{\run+1},\state_{\run})
	\notag\\
	&\leq \obj(\state_{\run})+\braket{\nabla \obj(\state_{\run})}{\state_{\run+1}-\state_{\run}}+\smooth \left[\breg(\state_{\run+1},\state_{\run})+\breg(\state_{\run},\state_{\run+1})\right]
	\notag\\
	&=\obj(\state_{\run})+\braket{\signal_{\run}}{\state_{\run+1}-\state_{\run}}+\braket{\noise_{\run}}{\state_{\run}-\state_{\run+1}}+\smooth \step_{\run}^{2}\res_{\run}^{2}
	\notag\\
	&\leq \obj(\state_{\run})-\frac{1}{\step_{\run}}\left [\breg(\state_{\run+1},\state_{\run})+\breg(\state_{\run},\state_{\run+1}) \right]+\dnorm{\noise_{\run}}\norm{\state_{\run}-\state_{\run+1}}+\smooth \step_{\run}^{2}\res_{\run}^{2}
	\notag\\
	&= \obj(\state_{\run})-\step_{\run}\res_{\run}^{2}+\dnorm{\noise_{\run}}\norm{\state_{\run}-\state_{\run+1}}+\smooth \step_{\run}^{2}\res_{\run}^{2}
\end{align}
Now, since $\hreg$ is $\hstr-$ strongly convex we have that:
\begin{equation}
\norm{\state_{\run}-\state_{\run+1}}\leq \sqrt{\frac{2}{\hstr}\left[\breg(\state_{\run+1},\state_{\run})+\breg(\state_{\run},\state_{\run+1}) \right]}=\sqrt{\frac{2}{\hstr}}\step_{\run}\res_{\run}
\end{equation}
and using the fact that the noise $\dnorm{\noise_{\run}}\leq \sdev$ almost surely, we have:
\begin{equation}
\obj(\state_{\run+1})\leq \obj(\state_{\run})-\step_{\run}\res_{\run}^{2}+\sqrt{\frac{2}{\hstr}}\step_{\run}\res_{\run}^{2}+\smooth\step_{\run}^{2}\res_{\run}^{2}
\end{equation}
Therefore, after rearranging and telescoping we get:
\begin{equation}
\sum_{\run=1}^{\nRuns}\step_{\run}\res_{\run}^{2}\leq 2\left[\obj(\state_1)-\min_{\point \in \points}\obj(\point)+\sum_{\run=1}^{\nRuns}\step_{\run}\res_{\run}^{2}(\smooth \step_{\run}-\frac{1}{2})+\sdev \sqrt{\frac{2}{\hstr}}\sum_{\run=1}^{\nRuns}\step_{\run}\res_{\run}\right]
\end{equation}
Now, let us bound each term of the (RHS) of the above individually:
\begin{itemize}
\item
For the term $\sum_{\run=1}^{\nRuns}\step_{\run}\res_{\run}^{2}(\smooth \step_{\run}-\frac{1}{2})$ we first set:
\begin{equation}
\run_{0}=\max \{1\leq \run \leq \nRuns: \step_{\run}\geq \frac{1}{2\smooth}\}
\end{equation}
Then, by decomposing the  said sum we get:
\begin{align}
\sum_{\run=1}^{\nRuns}\step_{\run}\res_{\run}^{2}(\smooth \step_{\run}-\frac{1}{2})
	&=\sum_{\run=1}^{\run_0}\step_{\run}\res_{\run}^{2}(\smooth \step_{\run}-\frac{1}{2})+\sum_{\run=\run_{0}+1}^{\nRuns}\step_{\run}\res_{\run}^{2}(\smooth \step_{\run}-\frac{1}{2})
	\notag\\
	&\leq \sum_{\run=1}^{\run_0}\step_{\run}\res_{\run}^{2}(\smooth \step_{\run}-\frac{1}{2})
	\notag\\
	&\leq \smooth \sum_{\run=1}^{\run_0}\step_{\run}^{2}\res_{\run}^{2}
\end{align}
with the second inequality being obtained by the definition of $\run_{0}$.
Now, due to the fact that $\res_{\run}^{2}\leq \tilde{\gbound}^{2}$ almost surely (by invoking \cref{lem:bound-stoch}) we have:
\begin{align}
\smooth\sum_{\run=1}^{\run_0}\step_{\run}^{2}\res_{\run}^{2}
	&=\smooth \sum_{\run=1}^{\run_0}\frac{\res_{\run}^{2}}{\res_{0}^{2}+\sum_{\runalt=1}^{\run-1}\res_{\runalt}^{2}}
	\notag\\
	&\leq \smooth \left[2+\frac{4\tilde{\gbound}^{2}}{\res_{0}^{2}}+2\log(1+\frac{1}{\res_{0}^{2}}\sum_{\run=1}^{\run_{0}-1}\res_{\run}^{2}) \right]
	\notag\\
	&\leq \smooth \left[ 2+\frac{4\tilde{\gbound}^{2}}{\res_{0}^{2}}+2\log\frac{1}{\res_{0}^{2}}(\res_{0}^{2}+\sum_{\run=1}^{\run_0-1}\res_{\run}^{2})\right]
	\notag\\
	&\leq \smooth \left[ 2+\frac{4\tilde{\gbound}^{2}}{\res_{0}^{2}}+2\log \frac{1}{\res_{0}^{2}\step_{\run_0}^{2}}\right]
\end{align}
Therefore, by the definition of $\run_0$ we finally get with probability $1$:
\begin{equation}
\sum_{\run=1}^{\nRuns}\step_{\run}\res_{\run}^{2}(\smooth \step_{\run}-\frac{1}{2})\leq \smooth\left[ 2+\frac{4\tilde{\gbound}^{2}}{\res_{0}^{2}}+2\log\frac{4\smooth^{2}}{\res_{0}^{2}}\right]
\end{equation}
\item
For the term $\sdev \sqrt{\frac{2}{\hstr}}\sum_{\run=1}^{\nRuns}\step_{\run}\res_{\run}$ we have:
\begin{align}
\sdev \sqrt{\frac{2}{\hstr}}\sum_{\run=1}^{\nRuns}\step_{\run}\res_{\run}&=\sdev \sqrt{\frac{2}{\hstr}}\sum_{\run=1}^{\nRuns}\sqrt{\step_{\run}^{2}\res_{\run}^{2}}
	\leq \sdev \sqrt{\frac{2}{\hstr}}\sqrt{\nRuns}\sqrt{\sum_{\run=1}^{\nRuns}\step_{\run}^{2}\res_{\run}^{2}}
\end{align}
Therefore, by working in the same spirit as above we get:
\begin{align}
\sdev \sqrt{\frac{2}{\hstr}}\sum_{\run=1}^{\nRuns}\step_{\run}\res_{\run}
	&\leq \sdev \sqrt{\frac{2}{\hstr}}\sqrt{2+\frac{4\tilde{\gbound}^{2}}{\res_{0}^{2}}+2\log(1+\frac{1}{\res_{0}^{2}}\sum_{\run=1}^{\nRuns}\res_{\run}^{2})}
	\notag\\
	&\leq \sdev \sqrt{\frac{2}{\hstr}}\sqrt{\nRuns}\sqrt{2+\frac{4\tilde{\gbound}^{2}}{\res_{0}^{2}}+2\log(1+\frac{\tilde{\gbound}^{2}}{\res_{0}^{2}}\nRuns)}
\end{align}
\end{itemize} 
On the other hand, we may the (LHS) from below as follows:
\begin{align}
\sum_{\run=1}^{\nRuns}\step_{\run}\res_{\run}^{2}
	\geq \step_{\nRuns}\sum_{\run=1}^{\nRuns}\res_{\run}^{2}
	\geq \step_{\nRuns}\left[\res_{0}^{2} -\res_{0}^{2}+ \sum_{\run=1}^{\nRuns}\res_{\run}^{2}\right]
	=\frac{\step_{\nRuns}}{\step_{\nRuns+1}^{2}}-\res_{0}^{2}\step_{\nRuns}
	=\frac{1}{\step_{\nRuns}}-\res_{0}^{2}\step_{\nRuns}
\end{align}
So, combining the above:
\begin{multline}
\frac{1}{\step_{\nRuns}}-\res_{0}^{2}\step_{\nRuns}
	\leq 2(\obj(\state_1)-\min_{\point \in \points} \obj(\point)+
\smooth\left[2+\frac{4\tilde{\gbound}^{2}}{\res_{0}^{2}}+2\log\frac{4\smooth^{2}}{\res_{0}^{2}}\right]
\\
+\sdev \sqrt{\frac{2}{\hstr}}\sqrt{\nRuns}\sqrt{2+\frac{4\tilde{\gbound}^{2}}{\res_{0}^{2}}+2\log(1+\frac{\tilde{\gbound}^{2}}{\res_{0}^{2}}\nRuns)})
\end{multline}
which finally yields with probability $1$:
\begin{equation}
\frac{1}{\step_{\nRuns}}\leq \res_{0}+2(\obj(\state_1)-\min_{\point \in \points} \obj(\point)+
\smooth\left[2+\frac{4\tilde{\gbound}^{2}}{\res_{0}^{2}}+2\log\frac{4\smooth^{2}}{\res_{0}^{2}}\right]
+\sdev \sqrt{\frac{2}{\hstr}}\sqrt{\nRuns}\sqrt{2+\frac{4\tilde{\gbound}^{2}}{\res_{0}^{2}}+2\log(1+\frac{\tilde{\gbound}^{2}}{\res_{0}^{2}}\nRuns)})
\end{equation}
and hence with probability $1$:
\begin{equation*}
\step_{\nRuns}
	\geq \bracks*{
		\res_{0}+2(\obj(\state_1)-\min_{\point \in \points} \obj(\point)
		+ \smooth\left[2+\frac{4\tilde{\gbound}^{2}}{\res_{0}^{2}}+2\log\frac{4\smooth^{2}}{\res_{0}^{2}}\right]
		+\sdev \sqrt{\frac{2}{\hstr}}\sqrt{\nRuns}\sqrt{2+\frac{4\tilde{\gbound}^{2}}{\res_{0}^{2}}+2\log(1+\frac{\tilde{\gbound}^{2}}{\res_{0}^{2}}\nRuns)})
		}^{-1}
\qedhere
\end{equation*}
Therefore, by setting:
\begin{equation}
A=\res_{0}+2(\obj(\state_1)-\min_{\point \in \points} \obj(\point)
		+ \smooth\left[2+\frac{4\tilde{\gbound}^{2}}{\res_{0}^{2}}+2\log\frac{4\smooth^{2}}{\res_{0}^{2}}\right]
\end{equation}
and 
\begin{equation}
B=\sdev \sqrt{\frac{2}{\hstr}}\sqrt{2+\frac{4\tilde{\gbound}^{2}}{\res_{0}^{2}}+2\log(1+\frac{\tilde{\gbound}^{2}}{\res_{0}^{2}}\nRuns)})
\end{equation}
we get that:
\begin{equation}
\ex\left[\sum_{\run=1}^{\nRuns}(\obj(\state_{\run})-\obj(\sol))\step_{\nRuns} \right]\geq \left(A+B\sqrt{\nRuns}\right)^{-1}\ex\left[\sum_{\run=1}^{\nRuns}(\obj(\state_{\run})-\obj(\sol)) \right]
\end{equation}
Moreover, working in the same spirit as in \cref{thm:stoch} we have:
\begin{equation}
 \left(A+B\sqrt{\nRuns}\right)^{-1}\ex\left[\sum_{\run=1}^{\nRuns}(\obj(\state_{\run})-\obj(\sol)) \right]
\leq\ex\left[\sum_{\run=1}^{\nRuns}(\obj(\state_{\run})-\obj(\sol))\step_{\nRuns} \right]\leq\left(D_{1}+\ex\left[ \sum_{\run=1}^{\nRuns}\step_{\run}^{2}\res_{\run}^{2}\right] \right) 
\end{equation}
which in turn yields:
\begin{equation}
\ex\left[\sum_{\run=1}^{\nRuns}(\obj(\state_{\run})-\obj(\sol)) \right]\leq \left(D_{1}+\ex\left[ \sum_{\run=1}^{\nRuns}\step_{\run}^{2}\res_{\run}^{2}\right] \right)  \left(A+B\sqrt{\nRuns}\right)
\end{equation}
The result then follows by dividing both sides by $\nRuns$ and by the fact that $\ex\left[ \sum_{\run=1}^{\nRuns}\step_{\run}^{2}\res_{\run}^{2}\right]=\bigoh(\log\nRuns)$. 
\end{proof}

\section{Last iterate Convergence}
\label{app:Last}
%
%
 Throughout this section we assume that $\obj$ satisfies the following weak-secant inequality of the  form:
\begin{equation}
\tag{SI}
\label{eq:secant}
\inf\setdef{\braket{\nabla\obj(\point)}{\point - \sol}}{\sol\in\argmin\obj,\point\in\cpt}
	> 0
\end{equation}
for every closed subset $\cpt$ of $\points$ that is separated by neighborhoods from $\argmin\obj$. More precisely, our proof is divided in two parts. To begin with, we first show that under \eqref{eq:RC} or \eqref{eq:RS} the iterates of\method possess a convergent subsequence towards the solution set $\sols$.
 Formally stated, we have the following proposition:
 
\begin{proposition}
\label{prop:conv-subsequence1}
Assume that $\obj$ is \eqref{eq:RC} or \eqref{eq:RS} and $\state_{\run}$ are the iterates generated by\method. Then, there exists a subsequence $\state_{k_{\run}}$ which converges to the solution set $\sols$. 
\end{proposition}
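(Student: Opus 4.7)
The plan is to prove the proposition by a unified contradiction argument that combines the one-step Bregman-decrease estimate implicit in \cref{prop:increment} (as already used inside the proof of \cref{lem:bound-2}) with the secant inequality \eqref{eq:secant}. The guiding intuition is that the adaptive step-size $\curr[\step]$ remains ``large enough'' for $\sum_{\run} \curr[\step]$ to diverge, while the cumulative Bregman-residual term $\sum_{\run} \curr[\step]^{2}\curr[\res]^{2}$ grows much more slowly; so if all but finitely many iterates were uniformly bounded away from $\sols$, the Bregman divergence $\breg(\sol,\state_\run)$ would eventually be driven below zero \textendash\ a contradiction.

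First I would record the basic descent estimate. The three-point identity \cref{lem:3point}, combined with $\nabla\hreg(\next)=\nabla\hreg(\curr)-\curr[\step]\nabla\obj(\curr)$, gives (exactly as in the proof of \cref{lem:bound-2})
\begin{equation}
\breg(\sol,\next)
	\leq \breg(\sol,\curr)
		- \curr[\step]\braket{\nabla\obj(\curr)}{\curr-\sol}
		+ \breg(\curr,\next),
\end{equation}
and bounding $\breg(\curr,\next)\leq \breg(\curr,\next)+\breg(\next,\curr)=\curr[\step]^{2}\curr[\res]^{2}$, telescoping from $\run=\start$ to $\nRuns-1$ and using $\breg(\sol,\last)\geq 0$ delivers the master inequality
\begin{equation}
\sum_{\run=\start}^{\nRuns-1}\curr[\step]\braket{\nabla\obj(\curr)}{\curr-\sol}
	\leq \breg(\sol,\init)
		+\sum_{\run=\start}^{\nRuns-1}\curr[\step]^{2}\curr[\res]^{2}.
\end{equation}

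Next I would argue by contradiction: assume that no subsequence of $\state_\run$ satisfies $\dist(\state_{k_\run},\sols)\to 0$. Then there exist $\eps>0$ and $N\in\N$ with $\dist(\state_\run,\sols)\geq\eps$ for all $\run\geq N$, and the closure of the tail $\{\state_\run\}_{\run\geq N}$ in $\points$ is separated by neighborhoods from $\sols$. Hence \eqref{eq:secant} supplies some $c>0$ with $\braket{\nabla\obj(\state_\run)}{\state_\run-\sol}\geq c$ for every $\run\geq N$. Since each summand $\curr[\step]\braket{\nabla\obj(\curr)}{\curr-\sol}$ is nonnegative by convexity, inserting the pointwise secant lower bound into the master inequality yields
\begin{equation}
c\sum_{\run=N}^{\nRuns-1}\curr[\step]
	\leq \breg(\sol,\init)
		+\sum_{\run=\start}^{\nRuns-1}\curr[\step]^{2}\curr[\res]^{2}.
\end{equation}

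The closing step is to check that the two sides of the last display are incompatible as $\nRuns\to\infty$. Under \eqref{eq:RC}, \cref{lem:bounded-Breg} gives $\curr[\res]^{2}\leq 2\gbound^{2}$, so $\curr[\step]\geq(\res_{\prestart}^{2}+2\gbound^{2}\run)^{-1/2}$ and $\sum_{\run=N}^{\nRuns-1}\curr[\step]=\Omega(\sqrt{\nRuns})$, while $\sum \curr[\step]^{2}\curr[\res]^{2}=O(\log\nRuns)$ from the logarithmic estimate already established inside the proof of \cref{thm:rates-det}. Under \eqref{eq:RS}, \cref{lem:smooth-Breg} guarantees $\curr[\step]\to\step_{\infty}>0$ and $\sum \curr[\res]^{2}<\infty$, so $\sum_{\run=N}^{\nRuns-1}\curr[\step]=\Omega(\nRuns)$ whereas $\sum \curr[\step]^{2}\curr[\res]^{2}$ is uniformly bounded. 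In both regimes the left-hand side strictly dominates the right as $\nRuns\to\infty$, which is the desired contradiction; hence $\liminf_\run \dist(\state_\run,\sols)=0$ and the claimed subsequence exists. The main subtlety I anticipate is justifying that \eqref{eq:secant} applies after passage to the closure of the tail iterates, which follows immediately from the uniform separation $\dist(\state_\run,\sols)\geq\eps$ for $\run\geq N$.
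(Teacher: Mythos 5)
Your proposal is correct and follows essentially the same route as the paper's proof: the same one-step Bregman descent estimate from \cref{prop:increment}, telescoped and combined with the secant inequality \eqref{eq:secant}, followed by the same case split showing $\sum_{\run}\curr[\step]=\Omega(\sqrt{\nRuns})$ \textpar{resp. $\Omega(\nRuns)$} versus $\sum_{\run}\curr[\step]^{2}\curr[\res]^{2}=\bigoh(\log\nRuns)$ \textpar{resp. $\bigoh(1)$} under \eqref{eq:RC} \textpar{resp. \eqref{eq:RS}}. The only cosmetic difference is that you keep the telescoped bound in ``regret'' form and invoke nonnegativity of the first $N$ summands, whereas the paper relabels the tail so that all iterates lie in the separated closed set and drives $\breg(\sol,\curr)$ to $-\infty$.
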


\begin{proof}
Assume to the contrary that the sequence $\state_{\run}$ generated by\method admits no limit points in $\sols=\argmin \obj$. Then, there exists a (non-empty) closed set $\cpt \subseteq \points$ which is separated by neighbourhoods from $\argmin \obj$ and is such that $\state_{\run}\in \cvx$ for all sufficiently large $\run$. Then, by relabelling $\state_{\run}$ if necessary, we can assume without loss of generality that $\state_{\run} \in \cpt$ for all $\run \in \N$. Thus, following the spirit of \cref{lem:bound-2}, we have:

\begin{align}
\breg(\sol,\state_{\run+1})
	&\leq \breg(\sol,\state_{\run})-\step_{\run}\braket{\nabla \obj(\state_{\run})}{\state_{\run}-\sol}+\breg(\state_{\run},\state_{\run+1})
	\notag\\
	&\leq \breg(\sol,\state_{\run})-\step_{\run}\braket{\nabla \obj(\state_{\run})}{\state_{\run}-\sol}+\left[\breg(\state_{\run},\state_{\run+1})+\breg(\state_{\run+1},\state_{\run})\right]
	\notag\\
	&= \breg(\sol,\state_{\run})-\step_{\run}\braket{\nabla \obj(\state_{\run})}{\state_{\run}-\sol}+\step_{\run}^{2}\res_{\run}^{2}
\end{align}
with the last equality being obtained by the definition of  \eqref{eq:res-Breg}. Now, applying \eqref{eq:secant}
we get:
\begin{equation}
\breg(\sol,\state_{\run+1})\leq \breg(\sol,\state_{\run})-\step_{\run}\delta(\cpt)+\step_{\run}^{2}\aux_{\run}^{2}
\end{equation}
with $\delta(\cpt)=\inf \{\braket{\nabla \obj(\point)}{\point-\sol}: \sol \in \argmin \obj, \point \in \cpt\}>0$. Hence, 
by telescoping $\run=1,\dotsc, \nRuns$, factorizing and setting $\beta_{\run}=\sum_{\run=1}^{\nRuns}\step_{\run}$ we have:
\begin{equation}
\label{eq:subseq}
\breg(\sol,\state_{\nRuns+1})\leq \breg(\sol,\state_{1})-\beta_{\run}\left[ \delta(\cpt)-\frac{\sum_{\run=1}^{\nRuns}\step_{\run}^{2}\aux_{\run}^{2}}{\beta_{\run}}\right]
\end{equation}
Now, \eqref{eq:subseq} will be the crucial lemma that will walk throughout our analysis. In particular, we will treat the different regularity conditions of \eqref{eq:RC} and \eqref{eq:RS} seperately.

\para{Case 1: The \eqref{eq:RC} case}
Assume that $\obj$ satisfies \eqref{eq:RC}. By examining the asymptotic behaviour of each term individually, we obtain:
\begin{itemize}
\item
For the term $\beta_{\nRuns}=\sum_{\run=1}^{\nRuns}\step_{\run}$, we have:
\begin{equation}
\beta_{\nRuns}=\sum_{\run=1}^{\nRuns}\frac{1}{\sqrt{\res_{0}^{2}+\sum_{j=1}^{\run-1}\res_{\run}^{2}}}\geq \sum_{\run=1}^{\nRuns}\frac{1}{\sqrt{\res_{0}^{2}+2\gbound^{2}\run}}
\end{equation}
which yields that $\beta_{\nRuns}\to +\infty$ and more precisely $\beta_{\nRuns}=\Omega(\sqrt{\nRuns})$.
\item
For the term $\frac{\sum_{\run=1}^{\nRuns}\step_{\run}^{2}\res_{\run}^{2}}{\beta_{\nRuns}}$, for the numerator we have:
\begin{align}
\sum_{\run=1}^{\nRuns}\step_{\run}^{2}\res_{\run}^{2}
	&=\sum_{\run=1}^{\nRuns}\frac{\res_{\run}^{2}}{\res_{0}^{2}+\sum_{j=1}^{\run-1}\res_{j}^{2}/\res_{0}^{2}}
	\notag\\
	&\leq 2+8\gbound^{2}/\res_{0}^{2}+2\log(1+\sum_{\run=1}^{\nRuns-1}\res_{\run}^{2}/\res_{0}^{2})
	\notag\\
	&\leq 2+8\gbound^{2}/\res_{0}^{2}+2\log(1+2\gbound^{2}\nRuns/\res_{0}^{2})
\end{align}
which yields that $\sum_{\run=1}^{\nRuns}\step_{\run}^{2}\res_{\run}^{2}=\bigoh(\log\nRuns)$, and combined with the fact that $\beta_{\run}=\Omega(\sqrt{\nRuns})$ we readily get: 
\begin{equation}
\frac{\sum_{\run=1}^{\nRuns}\step_{\run}^{2}\res_{\run}^{2}}{\beta_{\nRuns}}\to 0
\end{equation}
\end{itemize}
So, combining all the above and letting $\nRuns \to +\infty$ in \eqref{eq:subseq},  we get that $\breg(\sol,\state_{\nRuns+1})\to -\infty$, a contradiction. Therefore, the result under \eqref{eq:RC} follows.

\para{Case 2:  The \eqref{eq:RS} case}
On the other hand, assume that $\obj$ satisfies \eqref{eq:RS}. Recalling \cref{lem:smooth-Breg}
and the fact that $\step_{\run}$ is decreasing we have:
\begin{equation}
\sum_{\run=1}^{\nRuns}\step_{\run}\res_{\run}^{2}\leq \sum_{\run=1}^{+\infty}\res_{\run}^{2}<+\infty
\end{equation}
which by working as in \cref{lem:smooth-Breg} also yields:
\begin{equation}
\lim_{\run \to +\infty}\step_{\run}=\gamma_{\infty}>0
\end{equation}
Additionally, since $\step_{\run}$ is decreasing and bounded we also have that $\gamma_{\infty}=\inf_{\run}\step_{\run}$.
Now, we shall re-examine the terms of \eqref{eq:subseq}. More precisely, we have:
\begin{itemize}
\item
For $\beta_{\nRuns}$ we have:
\begin{equation}
\label{eq:linear}
\beta_{\nRuns}=\sum_{\run=1}^{\nRuns}\step_{\run}\geq \gamma_{\infty}\sum_{\run=1}^{\nRuns}1=\gamma_{\infty}\nRuns
\end{equation}
which in turn yields that $\beta_{\nRuns}\to +\infty$ and more precisely $\beta_{\nRuns}=\Omega(\nRuns)$.
\item
For the term $\frac{\sum_{\run=1}^{\nRuns}\step_{\run}^{2}\res_{\run}^{2}}{\beta_{\nRuns}}$, for the numerator we have by the fact that $\step_{\run}\leq 1/\res_{0}$ and \cref{lem:smooth-Breg}:
\begin{equation}
\sum_{\run=1}^{\nRuns}\step_{\run}\res_{\run}^{2}\leq \frac{1}{\res_{0}}\sum_{\run=1}^{\nRuns}\res_{\run}^{2}<+\infty
\end{equation}
which yields that $\sum_{\run=1}^{\nRuns}\step_{\run}^{2}\res_{\run}^{2}=\bigoh(1)$, which combined with
\eqref{eq:linear} gives that:
\begin{equation}
\frac{\sum_{\run=1}^{\nRuns}\step_{\run}^{2}\res_{\run}^{2}}{\beta_{\nRuns}}\to 0
\end{equation}
\end{itemize}
so, again combing the above and letting $\nRuns \to +\infty$ in \eqref{eq:subseq}, we get that $\breg(\sol,\state_{\nRuns+1})\to -\infty$, a contradiction.
Therefore, the result follows also under \eqref{eq:RS}.
\end{proof}

Having all this at hand, we are finally in the position to prove the convergence of the actual iterates of the method.  For that we will need an intermediate lemma that  shall allow us to pass from a convergent subsequence to global convergence (see also \cite{Com01}, \cite{Pol87}). 

\begin{lemma}
\label{lem:quasi-Fejer}
Let $\chi \in (0,1]$, $(\alpha_{\run})_{\run \in \N}$, $(\beta_{\run})_{\run \in \N}$ non-negative sequences and
$(\varepsilon_{\run})_{\run \in \N}\in l^{1}(\N)$ such that $\run=1,2,\dotsc$:
\begin{equation}
\alpha_{\run+1}\leq \chi \alpha_{\run}-\beta_{\run}+\varepsilon_{\run}
\end{equation}
Then, $\alpha_{\run}$ converges.
\end{lemma}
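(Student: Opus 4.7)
The plan is to split into two regimes: $\chi = 1$ (the classical quasi-F\'ejer case) and $\chi \in (0,1)$ (strict contraction). They call for different arguments but both lead to the conclusion that $\alpha_{\run}$ converges; in fact, the contraction regime yields the sharper statement $\alpha_{\run} \to 0$.

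For $\chi = 1$, I would use the standard companion-sequence/telescoping trick. Since $(\varepsilon_{\run}) \in \ell^{1}(\N)$, the tail $s_{\run} = \sum_{k \geq \run} \varepsilon_{k}$ is well-defined and satisfies $s_{\run} \to 0$. Setting $\tilde\alpha_{\run} = \alpha_{\run} + s_{\run}$, the hypothesis rearranges as $\tilde\alpha_{\run+1} \leq \tilde\alpha_{\run} - \beta_{\run} \leq \tilde\alpha_{\run}$, so $\tilde\alpha_{\run}$ is non-increasing; combined with $\tilde\alpha_{\run} \geq 0$, it converges to some $\tilde\alpha_{\infty} \geq 0$, and $s_{\run} \to 0$ then yields $\alpha_{\run} \to \tilde\alpha_{\infty}$. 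As a byproduct one also gets $\sum_{\run} \beta_{\run} < \infty$, which can be useful downstream in the last-iterate argument.

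For $\chi \in (0,1)$, I would drop the harmless $-\beta_{\run}$ term and unroll the recursion to obtain $\alpha_{\run+1} \leq \chi^{\run}\alpha_{1} + \sum_{k=1}^{\run}\chi^{\run-k}\varepsilon_{k}$. The first summand vanishes geometrically. For the convolution sum, given any $\epsilon > 0$ I would pick $N$ with $\sum_{k \geq N}\varepsilon_{k} < \epsilon$ and split at $N$: the tail piece is bounded by $\epsilon$, while the head piece is dominated by $\chi^{\run - N + 1}\sum_{k=1}^{N-1}\varepsilon_{k}$, which tends to $0$ as $\run \to \infty$. Sending $\run \to \infty$ and then $\epsilon \to 0$ yields $\alpha_{\run} \to 0$.

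There is no serious obstacle here; the result is essentially textbook. The only point worth noting is that the two cases rely on genuinely different mechanisms: telescoping against the absorbed $\ell^{1}$-tail for $\chi = 1$, versus geometric attenuation of a vanishing perturbation for $\chi < 1$. This is why the proof is case-based and does not admit a single unified line.
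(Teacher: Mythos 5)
Your proof is correct, but it follows a genuinely different route from the paper's. The paper handles all $\chi\in(0,1]$ uniformly: it first unrolls the recursion to obtain boundedness of $(\alpha_{\run})$, extracts a convergent subsequence with limit $\alpha$, and then uses the crude tail estimate $\alpha_{\run}\leq\alpha_{k_{\run_{0}}}+\sum_{m>k_{\run_{0}}}\varepsilon_{m}$ (valid because $\chi\leq1$, $\alpha_{\run}\geq0$ and $\beta_{\run}\geq0$ give $\alpha_{\run+1}\leq\alpha_{\run}+\varepsilon_{\run}$) to conclude $\limsup_{\run}\alpha_{\run}\leq\liminf_{\run}\alpha_{\run}+\delta$ for every $\delta>0$. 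Your case split buys a little more in each regime: for $\chi=1$ the companion sequence $\tilde\alpha_{\run}=\alpha_{\run}+\sum_{k\geq\run}\varepsilon_{k}$ yields monotone convergence and, as you note, summability of $(\beta_{\run})$, which the paper's subsequence argument does not immediately provide; for $\chi<1$ the geometric-attenuation argument yields the sharper conclusion $\alpha_{\run}\to0$. One small correction to your closing remark: a single unified line \emph{does} exist, since $\alpha_{\run}\geq0$ and $\chi\leq1$ give $\chi\alpha_{\run}\leq\alpha_{\run}$, so your $\chi=1$ companion-sequence argument applies verbatim to the whole range $\chi\in(0,1]$ (at the cost of not recovering $\alpha_{\run}\to0$ in the contractive case).
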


\begin{proof}
First, one shows that $\alpha_{\run \in \N}$ is a bounded sequence.  Indeed, one can derive directly that:
\begin{equation}
\alpha_{\run+1}\leq \chi^{\run+1}\alpha_0+\sum_{k=0}^{\run}\chi^{\run-k}\varepsilon_k
\end{equation}
Hence, $(\alpha_{\run})_{\run \in \N}$ lies in $[0,\alpha_0+\varepsilon]$, with $\varepsilon=\sum_{\run=0}^{+\infty}\varepsilon_{\run}$. Now, one is able to extract a convergent subsequence $(\alpha_{k_{\run}})_{\run \in \N}$, let say $\lim_{\run \to +\infty}\alpha_{k_{\run}}=\alpha \in [0,\alpha_0+\varepsilon]$ and fix $\delta >0$. Then, one can find some $\run_0$ such that $\alpha_{k_{\run_0}}-\alpha < \frac{\delta}{2}$ and $\sum_{m>\run_{k_{\run_0}}}\varepsilon_m<\frac{\delta}{2}$. That said, we have:
\begin{equation}
0\leq \alpha_{\run}\leq \alpha_{k_{\run_0}}+\sum_{m>\run_{k_{\run_0}}}\varepsilon_m<\frac{\delta}{2}+\alpha+\frac{\delta}{2}=\alpha+\delta
\end{equation}
Hence, $\limsup_{\run}\alpha_{\run}\leq \liminf_{\run}\alpha_{\run}+\delta$. Since, $\delta$ is chosen arbitrarily the result follows.
\end{proof}

\begin{proof}[Proof of \cref{thm:last-iterate}]

We will divide our proof in two parts by distinguishing the two different regularity cases.
\para{Case 1: The \eqref{eq:RC} case}
Given that $\step_{\run}$ is decreasing and bounded from below we have that its limit exists, denoted by $\gamma_{\infty}\geq 0$.
We shall consider two cases:
\begin{enumerate}
\item
\emph{$\gamma_{\infty}>0$}: Following the same reasoning with \cref{lem:smooth-Breg} we get that:
\begin{equation}
\sum_{\run=1}^{\nRuns}\step_{\run}^{2}\res_{\run}^{2}\leq \sum_{\run=1}^{+\infty}\res_{\run}^{2}<+\infty
\end{equation}
Hence, by recalling the inequality:
\begin{equation}
\breg(\sol,\state_{\run+1})\leq \breg(\sol,\state_{\run})+\step_{\run}^{2}\res_{\run}^{2}\;\;\text{for all}\;\;\sol \in \sols
\end{equation}
whereas after taking infima on both sides with respect to $\sols$, we get:
\begin{equation}
\inf_{\sol\in \sols}\breg(\sol,\state_{\run+1})\leq \inf_{\sol \in \sols}\breg(\sol,\state_{\run})+\step_{\run}^{2}\res_{\run}^{2}
\end{equation}
and since the sequence $\step_{\run}^{2}\res_{\run}^{2}$ is summable we can directly apply \cref{lem:quasi-Fejer} which yields that the sequence $\inf_{\sol \in \sols}\breg(\sol,\state_{\run})$ is convergent.  Now, since by \cref{prop:conv-subsequence1},\method possesses a convergent subsequence towards the solution set $\sols$ the result follows.
\item
\emph{$\gamma_{\infty}=0$}: Pick some $\eps >0$ and consider the Bregman zone:
\begin{equation}
D_{\eps}=\{\point \in \points : \breg(\sols,\point)<\eps\}. 
\end{equation}
Then, it suffices to show that $\state_{\run}\in D_{\eps}$ for all sufficiently large $\run$. In doing so, consider the inequality:
\begin{align}
\breg(\sol,\state_{\run+1})
	&\leq \breg(\sol,\state_{\run})-\step_{\run}\braket{\nabla \obj(\state_{\run})}{\state_{\run}-\sol}+\step_{\run}^{2}\res_{\run}^{2}
	\notag\\
	&\leq \breg(\sol,\state_{\run})-\step_{\run}\braket{\nabla \obj(\state_{\run})}{\state_{\run}-\sol}+\step_{\run}^{2}
\frac{2\gbound^{2}}{\hstr}
\end{align}
with the second inequality being obtained by \cref{lem:bounded-Breg}. To proceed, assume inductively that $\state_{\run}\in D_{\eps}$. By the regularity assumptions of the regularizer $\hreg$, it follows that there exists a $\delta-$ neighbourhood contained in the closure of $D_{\eps/2}$. So, by the \eqref{eq:secant} condition we have:
\begin{equation}
\braket{\obj(\point)}{\point-\sol}\geq c>0\;\;\text{for some $c\equiv c(\eps)>0$ and for all $\point \in D_{\eps}\setminus D_{\eps/2}$ and $\sol \in \sols$}
\end{equation}
We consider two cases:
\begin{itemize}
\item
$\state_{\run}\in D_{\eps}\setminus D_{\eps/2}$: In. this case,  we have:
\begin{align}
\breg(\sol,\state_{\run+1})
	&\leq \breg(\sol,\state_{\run})-\step_{\run}\braket{\nabla \obj(\state_{\run})}{\state_{\run}-\sol}+\step_{\run}^{2}\frac{2\gbound^{2}}{\hstr}
	\notag\\
	&\leq \breg(\sol,\state_{\run})-\step_{\run}c+\step_{\run}^{2}\frac{2\gbound^{2}}{\hstr}
\end{align}
Thus, provided that $\step_{\run}\leq \frac{c\hstr}{2\gbound^{2}}$ we get that $\breg(\sol,\state_{\run+1})\leq \breg(\sol,\state_{\run})$. Hence, by taking infima on both sides relative to $\sol \in \sols$, we get that
$\breg(\sols,\state_{\run+1})\leq \breg(\sols,\state_{\run})<\eps$.
\item
$\state_{\run} \in D_{\eps/2}$: In this case, we have:
\begin{align}
\breg(\sol,\state_{\run+1})
	&\leq \breg(\sol,\state_{\run})-\step_{\run}\braket{\nabla \obj(\state_{\run})}{\state_{\run}-\sol}+\step_{\run}^{2}\frac{2\gbound^{2}}{\hstr}
	\notag\\
	&\leq \breg(\sol,\state_{\run})+\step_{\run}^{2}\frac{2\gbound^{2}}{\hstr}
\end{align}
with the second inequality being obtained by the optimality of $\sol$. Now, provided that $\step_{\run}^{2}\leq \frac{\eps \hstr}{4\gbound^{2}}$ or equivalently $\step_{\run}\leq \frac{\sqrt{\eps \hstr}}{2\gbound}$ we have:
\begin{equation}
\breg(\sol,\state_{\run+1})\leq \breg(\sol,\state_{\run})+\frac{\eps}{2}
\end{equation}
whereas again by taking infima on both sides we get that $\breg(\sols,\state_{\run+1})\leq \breg(\sols,\state_{\run})+\frac{\eps}{2}<\eps$.
\end{itemize}
Hence, summarizing we have that $\state_{\run+1}\in D_{\eps}$ whenever $\state_{\run}\in D_{\eps}$ and
$\step_{\run}\leq \min\{\frac{c\hstr}{2\gbound^{2}},\frac{\sqrt{\eps\hstr}}{2\gbound}\}$. Hence, the result follows by. \cref{prop:conv-subsequence1} and the fact that $\step_{\run}\to 0$.
\end{enumerate}

\para{Case 2: The \eqref{eq:RS} case}
Recall that we have the following inequality, 
\begin{equation}
\breg(\sol,\state_{\run+1})\leq \breg(\sol,\state_{\run})+\step_{\run}^{2}\res_{\run}^{2}\;\;\text{for all}\;\;\sol \in \sols
\end{equation}
whereas taking infima on both sides relative to $\sols$ we readily get:
\begin{equation}
\inf_{\sol \in \sols}\breg(\sol,\state_{\run+1})\leq \inf_{\sol \in \sols}\breg(\sol,\state_{\run})+\step_{\run}^{2}\res_{\run}^{2}
\end{equation}
Now, by recalling that by \cref{lem:smooth-Breg}, we have $\step_{\run}^{2}\res_{\run}^{2}$ is summable. we can apply directly \cref{lem:quasi-Fejer}. Thus, we have the sequence $\inf_{\sol \in \sols}\breg(\sol,\state_{\run})$ is convergent.
Moreover, \cref{prop:conv-subsequence1} guarantees that there a subsequence of $\inf_{\sol \in \sols}\norm{\state-\sol}^{2}$ that converges to $0$. We obtain that there exists also a subsequence of $\inf_{\sol \in \sols}\breg(\sol,\state_{\run})$  that converges to $0$ and since $\inf_{\sol \in \sols}\breg(\sol,\state_{\run})$ is convergent, we readily get that:
\begin{equation}
\inf_{\sol\in \sols}\norm{\sol-\state_{\run}}^{2}\leq \inf_{\sol \in \sols}\breg(\sol,\state_{\run})\to 0
\end{equation}
and the proof is complete.
\end{proof}

\section{Lemmas on numerical sequences}
\label{app:sequences}

In this appendix, we provide some necessary inequalities on numerical sequences that we require for the convergence rate analysis of the previous sections.
Most of the lemmas presented below already exist in the literature, and go as far back as \citet{ACBG02} and \citet{MS10};
when appropriate, we note next to each lemma the references with the statement closest to the precise version we are using in our analysis.
These lemmas can also be proved by the general methodology outlined in \citet[Lem.~14]{GSE14}, so we only provide a proof for two ancillary results that would otherwise require some more menial bookkeeping.

\smallskip


\begin{lemma}[\citealp{MS10}, \citealp{LYC18}]
\label{lem:sqrt}
For all non-negative numbers $\alpha_{1},\dotsc \alpha_{\run}$, the following inequality holds:
\begin{equation}
\sqrt{\sum_{\run=1}^{\nRuns}\alpha_{\run}}\leq \sum_{\run=1}^{\nRuns}\dfrac{\alpha_{\run}}{\sqrt{\sum_{i=1}^{\run}\alpha_{i}}}\leq 2\sqrt{\sum_{\run=1}^{\nRuns}\alpha_{\run}}
\end{equation}
\end{lemma}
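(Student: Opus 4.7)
The plan is to prove both inequalities by a single telescoping argument based on the partial sums $S_{\run} = \sum_{i=1}^{\run} \alpha_{i}$ (with the convention $S_{\prestart} = 0$), using the algebraic identity
\[
\sqrt{S_{\run}} - \sqrt{S_{\run-1}}
    = \frac{\alpha_{\run}}{\sqrt{S_{\run}} + \sqrt{S_{\run-1}}}
\]
valid whenever $S_{\run} > 0$. From this identity, two one-sided estimates fall out immediately, one for each inequality in the lemma.

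First I would prove the lower bound. Since $\sqrt{S_{\run-1}} \geq 0$, the denominator above satisfies $\sqrt{S_{\run}} + \sqrt{S_{\run-1}} \geq \sqrt{S_{\run}}$, so
\[
\sqrt{S_{\run}} - \sqrt{S_{\run-1}} \leq \frac{\alpha_{\run}}{\sqrt{S_{\run}}}.
\]
Summing over $\run = \start,\dotsc,\nRuns$ telescopes the left-hand side to $\sqrt{S_{\nRuns}} - \sqrt{S_{\prestart}} = \sqrt{\sum_{\run=\start}^{\nRuns} \alpha_{\run}}$, which yields the left-hand inequality.

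Next I would prove the upper bound by the reverse comparison: since $\sqrt{S_{\run-1}} \leq \sqrt{S_{\run}}$, the denominator is bounded above by $2\sqrt{S_{\run}}$, hence
\[
\frac{\alpha_{\run}}{\sqrt{S_{\run}}} \leq 2\bigl(\sqrt{S_{\run}} - \sqrt{S_{\run-1}}\bigr).
\]
Telescoping again over $\run = \start,\dotsc,\nRuns$ gives $\sum_{\run=\start}^{\nRuns} \alpha_{\run}/\sqrt{S_{\run}} \leq 2\sqrt{S_{\nRuns}}$, which is the right-hand inequality.

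The only mild obstacle is the edge case $S_{\run} = 0$ for some $\run$: in that situation all preceding $\alpha_{i}$ vanish (since they are non-negative), and with the natural convention $\alpha_{\run}/\sqrt{S_{\run}} = 0$ whenever $\alpha_{\run} = 0$, the summand contributes nothing and both estimates above still hold trivially. Once those terms are discarded, the telescoping argument applies verbatim to the nontrivial tail and the lemma follows. No further machinery is required.
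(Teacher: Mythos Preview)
Your argument is correct and is precisely the standard telescoping proof that appears in the cited references \cite{MS10,LYC18}; the paper itself does not supply a proof for this lemma, deferring instead to those sources. There is nothing to add or adjust.
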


\begin{lemma}[\citealp{LYC18}]
\label{lem:logarithmic-1}
For all non-negative numbers $\alpha_{1},\dotsc \alpha_{\run}$, the following inequality holds:
\begin{equation}
\sum_{\run=1}^{\nRuns}\dfrac{\alpha_{\run}}{1+\sum_{i=1}^{\run}\alpha_{i}}\leq 1+\log(1+\sum_{\run=1}^{\nRuns}\alpha_{\run})
\end{equation}
\end{lemma}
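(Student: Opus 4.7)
The plan is to telescope the left-hand side using a standard integral estimate for $\log(1+x)$. Writing $S_{\run} = \sum_{i=1}^{\run} \alpha_{i}$ with the convention $S_{0} = 0$, each summand rewrites as $(S_{\run} - S_{\run-1})/(1+S_{\run})$, so it is enough to bound this difference quotient by an increment of $\log(1+S_{\run})$.

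The key observation is that $x \mapsto 1/(1+x)$ is decreasing on $[0,\infty)$, so for each $\run$ the lower Riemann sum estimate
$$\log(1+S_{\run}) - \log(1+S_{\run-1}) = \int_{S_{\run-1}}^{S_{\run}} \frac{dx}{1+x} \;\geq\; \frac{S_{\run} - S_{\run-1}}{1+S_{\run}} = \frac{\alpha_{\run}}{1+S_{\run}}$$
holds (trivially so when $\alpha_{\run} = 0$, since both sides vanish). Telescoping over $\run = 1, \dotsc, \nRuns$ collapses the left-hand side and yields
$$\sum_{\run=1}^{\nRuns} \frac{\alpha_{\run}}{1+S_{\run}} \;\leq\; \log(1 + S_{\nRuns}) - \log(1+S_{0}) \;=\; \log\Bigl(1+\sum_{\run=1}^{\nRuns} \alpha_{\run}\Bigr),$$
which is already slightly sharper than the stated inequality; the additive ``$+1$'' on the right-hand side of the claim follows a fortiori.

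There is no real obstacle here: the whole argument is a one-line calculus estimate combined with a telescoping sum, and it is robust to degenerate cases such as $\alpha_{\run} = 0$ for all $\run$ (where both sides vanish). The only plausible reason to retain the looser ``$+1$'' form rather than the tight $\log(1+\sum_{\run}\alpha_{\run})$ bound is uniformity of presentation: the weaker form composes more cleanly with the other log-type estimates invoked in \cref{lem:logarithmic-3} and in the rate derivations of \cref{app:adamir}, where carrying a pristine multiplicative constant is not essential.
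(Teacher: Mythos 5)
Your proof is correct. The integral comparison $\int_{S_{\run-1}}^{S_{\run}}\frac{dx}{1+x}\geq \frac{S_{\run}-S_{\run-1}}{1+S_{\run}}$ is valid because the integrand $1/(1+x)$ is decreasing and the summand's denominator $1+\sum_{i=1}^{\run}\alpha_{i}$ already includes $\alpha_{\run}$, so the right endpoint gives the correct lower bound; telescoping then yields the claim (indeed without the additive $+1$). Note that the paper does not prove this lemma at all \textemdash\ it is cited from \citet{LYC18}, with the remark that such estimates follow from the general methodology of \citet[Lem.~14]{GSE14} \textemdash\ so there is no in-paper argument to compare against; yours is the standard route and is airtight, including the degenerate cases. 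Your closing observation is also apt: the looser ``$1+\log(\cdot)$'' form is what survives when the denominator lags by one term (as in \cref{lem:logarithmic-3}, where the boundedness $\alpha_{\run}\leq\alpha$ must be invoked to control the first few summands), which is presumably why the statement is kept in that uniform shape.
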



\begin{lemma}
\label{lem:logarithmic-2}
Let $b_{1},\dotsc, b_{\run}$ a sequence of non-negative numbers with $b_{1}>0$. Then, the following inequality holds:
\begin{equation}
\sum_{\run=1}^{\nRuns}
	\dfrac{b_{\run}}{\sum_{i=1}^{\run}b_{i}}
	\leq 2+\log\parens*{\dfrac{\sum_{\run=1}^{\nRuns}b_{\run}}{b_{1}}}
\end{equation}
\end{lemma}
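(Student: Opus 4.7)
My plan is to reduce the sum to a telescoping series via the standard inequality relating $b_t/S_t$ to log-increments of the partial sums $S_t := \sum_{i=1}^{t} b_i$. Since $b_1 > 0$, we have $S_t \geq b_1 > 0$ for every $t \geq 1$, so all quantities are well-defined, and the $t=1$ term contributes exactly $b_1/S_1 = 1$. The remaining terms will be controlled by $\log(S_T/b_1)$, which together with the isolated $1$ gives the bound $1 + \log(S_T/b_1) \leq 2 + \log(S_T/b_1)$ as claimed (in fact, a slightly sharper version of the stated inequality).

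The key step, which I would carry out first, is the following pointwise estimate: for each $t \geq 2$, since $1/x$ is decreasing,
\begin{equation}
\log S_t - \log S_{t-1} \;=\; \int_{S_{t-1}}^{S_t} \frac{dx}{x} \;\geq\; \frac{S_t - S_{t-1}}{S_t} \;=\; \frac{b_t}{S_t}.
\end{equation}
This inequality is valid whenever $S_{t-1} \leq S_t$, which holds because the $b_t$ are non-negative, and it is vacuous when $b_t = 0$ (both sides are zero). I would then telescope from $t = 2$ to $t = T$:
\begin{equation}
\sum_{t=2}^{T} \frac{b_t}{S_t} \;\leq\; \sum_{t=2}^{T} \bigl(\log S_t - \log S_{t-1}\bigr) \;=\; \log S_T - \log S_1 \;=\; \log\!\parens*{\frac{\sum_{t=1}^{T} b_t}{b_1}}.
\end{equation}

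Adding back the $t=1$ term then yields
\begin{equation}
\sum_{t=1}^{T} \frac{b_t}{S_t} \;=\; 1 \;+\; \sum_{t=2}^{T} \frac{b_t}{S_t} \;\leq\; 1 + \log\!\parens*{\frac{\sum_{t=1}^{T} b_t}{b_1}} \;\leq\; 2 + \log\!\parens*{\frac{\sum_{t=1}^{T} b_t}{b_1}},
\end{equation}
which is the desired inequality. There is no real obstacle here; the only mild care needed is that $b_1 > 0$ guarantees $S_t > 0$ for all $t$, so one never divides by zero and all logarithms have positive arguments. The extra ``$+1$'' slack in the statement (relative to what the proof actually gives) provides a buffer that will be useful in the subsequent applications but is not needed for the proof itself.
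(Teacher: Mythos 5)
Your proof is correct. The only point worth comparing: the paper does not prove this lemma from first principles at all \textemdash\ it obtains it as a one-line corollary of the preceding lemma (the bound $\sum_{t} \alpha_t/(1+\sum_{i\le t}\alpha_i) \le 1+\log(1+\sum_t \alpha_t)$, cited from the literature) applied to the rescaled sequence $\alpha_t = b_t/b_1$, whereas you give a self-contained argument via the integral comparison $\log S_t - \log S_{t-1} = \int_{S_{t-1}}^{S_t} dx/x \ge b_t/S_t$ and telescoping. The underlying mechanism is the same (that cited lemma is itself standardly proved by exactly this telescoping of log-increments), but your route has two small advantages: it avoids the bookkeeping of matching the normalized partial sums $1+\sum_{i\le t} b_i/b_1$ to $S_t/b_1$ (which, applied literally as the paper states it, bounds the slightly different sum $\sum_t b_t/(b_1+S_t)$ and requires a shift of index to recover the claimed form), and it yields the sharper constant $1+\log(S_T/b_1)$ rather than $2+\log(S_T/b_1)$. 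All steps \textemdash\ positivity of $S_t$ from $b_1>0$, the degenerate case $b_t=0$, and the telescoping \textemdash\ are handled correctly.
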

\begin{proof}
It is directly obtained by applying \cref{lem:logarithmic-1} for the sequence $\alpha_{\run}=b_{\run}/b_{1}$.
\end{proof}

The following set of inequalities are due to \cite{BL19}. For completeness, we provide a sketch of their proof.

\begin{lemma}[\citealp{BL19}]
\label{lem:ineq}
For all non-negative numbers: $\alpha_{1},\dotsc \alpha_{\run}\in [0,\alpha]$, $\alpha_{0}\geq 0$, the following inequality holds:
\begin{equation}
\sqrt{\alpha_{0}+\sum_{\run=1}^{\nRuns-1}\alpha_{i}}-\sqrt{\alpha_{0}}\leq \sum_{\run=1}^{\nRuns}\frac{\alpha_{\run}}{\sqrt{\alpha_{0}+\sum_{i=1}^{\run-1}\alpha_{j}}}\leq \frac{2\alpha}{\sqrt{\alpha_{0}}}+3\sqrt{\alpha}+3\sqrt{\alpha_{0}+\sum_{\run=1}^{\nRuns-1}\alpha_{\run}}
\end{equation}
\end{lemma}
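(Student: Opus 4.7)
The plan is to set $T_\run := \alpha_0 + \sum_{j=1}^{\run-1}\alpha_j$, so that $\alpha_\run = T_{\run+1}-T_\run$ and the central sum is $\sum_{\run=1}^\nRuns \alpha_\run/\sqrt{T_\run}$, i.e.\ a left Riemann sum for $\int_{\alpha_0}^{T_{\nRuns+1}} t^{-1/2}\,dt = 2\sqrt{T_{\nRuns+1}}-2\sqrt{\alpha_0}$. Both inequalities will come from comparing this sum against the integral, but the upper bound requires extra care because the partition step $\alpha_\run$ can be much larger than the current grid value $T_\run$ near the beginning of the sum.

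For the lower bound, since $t \mapsto 1/\sqrt{t}$ is decreasing, the left endpoint overestimates the integral on each subinterval $[T_\run, T_{\run+1}]$, so
\[
\frac{\alpha_\run}{\sqrt{T_\run}} \;\geq\; \int_{T_\run}^{T_{\run+1}} \frac{dt}{\sqrt{t}} \;=\; 2\bigl(\sqrt{T_{\run+1}}-\sqrt{T_\run}\bigr).
\]
Summing telescopes to $2(\sqrt{T_{\nRuns+1}}-\sqrt{\alpha_0})$, which comfortably exceeds the claimed $\sqrt{T_\nRuns}-\sqrt{\alpha_0}$.

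For the upper bound, the strategy is to split the sum at the largest index $k \in \{0,1,\dots,\nRuns\}$ for which $T_\run < \alpha$ holds for every $\run \leq k$. On the ``large denominator'' tail $\run > k$ (where $T_\run \geq \alpha \geq \alpha_\run$) the ratio $T_{\run+1}/T_\run$ is at most $2$, and so the standard rewrite
\[
\frac{\alpha_\run}{\sqrt{T_\run}} \;=\; \bigl(1+\sqrt{T_{\run+1}/T_\run}\bigr)\bigl(\sqrt{T_{\run+1}}-\sqrt{T_\run}\bigr) \;\leq\; (1+\sqrt 2)\bigl(\sqrt{T_{\run+1}}-\sqrt{T_\run}\bigr)
\]
telescopes, and combining this with the subadditivity bound $\sqrt{T_{\nRuns+1}} \leq \sqrt{T_\nRuns}+\sqrt{\alpha}$ and $1+\sqrt 2<3$ produces the two $3\sqrt{\cdot}$ terms of the claim.

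The main obstacle is the ``small denominator'' head $\run \leq k$, where $T_{\run+1}/T_\run$ can be arbitrarily large and the telescoping trick breaks down. The remedy is a deliberately crude per-term bound which is nevertheless tight in aggregate: using simply $\sqrt{T_\run}\geq\sqrt{\alpha_0}$,
\[
\sum_{\run=1}^k \frac{\alpha_\run}{\sqrt{T_\run}} \;\leq\; \frac{1}{\sqrt{\alpha_0}}\sum_{\run=1}^k \alpha_\run \;=\; \frac{T_{k+1}-\alpha_0}{\sqrt{\alpha_0}} \;\leq\; \frac{2\alpha}{\sqrt{\alpha_0}},
\]
where the last step uses the definition of $k$ to get $T_{k+1} \leq T_k + \alpha < 2\alpha$. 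The point is that the potentially large factor $\alpha/\sqrt{\alpha_0}$ is paid only once, yielding exactly the $2\alpha/\sqrt{\alpha_0}$ summand. Combining the two regimes gives the claimed upper bound.
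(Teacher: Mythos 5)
Your proof is correct. The paper does not actually write out an argument for this lemma \textendash\ it defers to \citet{BL19} \textendash\ but your strategy (split the sum at the last index where the running total $\alpha_{0}+\sum_{j=1}^{\run-1}\alpha_{j}$ is still below $\alpha$, pay a one-time $2\alpha/\sqrt{\alpha_{0}}$ for the head, and telescope the tail via $\alpha_{\run}/\sqrt{T_{\run}}=(1+\sqrt{T_{\run+1}/T_{\run}})(\sqrt{T_{\run+1}}-\sqrt{T_{\run}})\leq(1+\sqrt{2})(\sqrt{T_{\run+1}}-\sqrt{T_{\run}})$) is exactly the standard argument from that reference, and it mirrors the split-at-threshold technique the paper does spell out for the companion \cref{lem:logarithmic-3}.
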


\begin{lemma}
\label{lem:logarithmic-3}
For all non-negative numbers: $\alpha_{1},\dotsc \alpha_{\run}\in [0,\alpha]$, $\alpha_{0}\geq 0$, we have:
\begin{equation}
\sum_{\run=1}^{\nRuns}\frac{\alpha_{\run}}{\alpha_{0}+\sum_{i=1}^{\run-1}\alpha_{i}}
	\leq 2
		+ \frac{4\alpha}{\alpha_{0}}
		+ 2\log\parens*{1+\sum_{\run=1}^{\nRuns-1}\frac{\alpha_{\run}}{\alpha_{0}}}
\end{equation}
\end{lemma}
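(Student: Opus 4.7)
Assume $\alpha_0 > 0$ (otherwise the bound is vacuous) and set $S_t = \alpha_0 + \sum_{i=1}^t \alpha_i$, so the sum to be estimated is $\sum_{t=1}^{\nRuns} \alpha_t / S_{t-1}$. I would decompose $\{1,\dotsc,\nRuns\}$ into \emph{tame} indices $\mathcal{T} = \{t : \alpha_t \le S_{t-1}\}$ and \emph{surge} indices $\mathcal{S} = \{t : \alpha_t > S_{t-1}\}$ and treat the two contributions separately.

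For $t \in \mathcal{T}$ we have $S_t = S_{t-1} + \alpha_t \le 2 S_{t-1}$, hence $\alpha_t/S_{t-1} \le 2\alpha_t/S_t$. Writing $\beta_t = \alpha_t/\alpha_0$ and dividing numerator and denominator by $\alpha_0$ gives $\alpha_t/S_t = \beta_t / (1 + \sum_{i=1}^{t} \beta_i)$, so \cref{lem:logarithmic-1} applied to $(\beta_t)$ yields
\begin{equation*}
\sum_{t \in \mathcal{T}} \frac{\alpha_t}{S_{t-1}}
\le 2\sum_{t=1}^{\nRuns} \frac{\alpha_t}{S_t}
\le 2 + 2\log\!\Bigl(1 + \sum_{t=1}^{\nRuns} \alpha_t/\alpha_0\Bigr).
\end{equation*}

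For $t \in \mathcal{S}$, the defining condition $\alpha_t > S_{t-1}$ rewrites as $S_t > 2 S_{t-1}$, so each surge step at least doubles $S$. Listing $\mathcal{S} = \{t_1 < \dotsb < t_k\}$ and using that $S$ is nondecreasing, an induction gives $S_{t_j - 1} \ge S_{t_{j-1}} > 2 S_{t_{j-1}-1} \ge \dotsb \ge 2^{j-1} S_{t_1 - 1} \ge 2^{j-1}\alpha_0$. Bounding $\alpha_{t_j} \le \alpha$ and summing a geometric series then produces
\begin{equation*}
\sum_{t \in \mathcal{S}} \frac{\alpha_t}{S_{t-1}}
\le \alpha \sum_{j=1}^{k} 2^{-(j-1)}\alpha_0^{-1}
< \frac{2\alpha}{\alpha_0}.
\end{equation*}

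Finally, to replace $\sum_{t=1}^{\nRuns}$ by $\sum_{t=1}^{\nRuns-1}$ inside the logarithm, I would use $\log(1 + x + y) - \log(1 + x) = \log\!\bigl(1 + y/(1+x)\bigr) \le y$ for $x, y \ge 0$, applied with $y = \alpha_{\nRuns}/\alpha_0 \le \alpha/\alpha_0$. This swap costs an extra $2\alpha/\alpha_0$, and adding the tame, surge, and swap contributions gives exactly $2 + 4\alpha/\alpha_0 + 2\log\!\bigl(1 + \sum_{t=1}^{\nRuns-1}\alpha_t/\alpha_0\bigr)$. The only genuinely delicate step is the doubling argument for the surge indices; everything else reduces to invoking \cref{lem:logarithmic-1} after a rescaling and to standard manipulations of $\log$.
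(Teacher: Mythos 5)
Your proof is correct, and it reaches the exact constants of the statement by a genuinely different decomposition than the paper's. The paper splits the sum \emph{in time}: it defines $\nRuns_{0}$ as the first index at which the running sum $\sum_{j=1}^{\run-1}\alpha_{j}$ exceeds $\alpha$, bounds the head of the sum crudely by $2\alpha/\alpha_{0}$ (denominator at least $\alpha_{0}$, numerators summing to at most $2\alpha$ by minimality of $\nRuns_{0}$), and for the tail uses $\alpha_{0}+\sum_{j<\run}\alpha_{j}\geq\tfrac12\bigl(\alpha_{0}+\alpha+\sum_{j<\run}\alpha_{j}\bigr)$ to shift the denominator forward and invoke \cref{lem:logarithmic-1}. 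You instead classify \emph{indices} by whether the increment $\alpha_{\run}$ exceeds the running total $S_{\run-1}$: the tame indices are handled by the same factor-of-two denominator shift plus \cref{lem:logarithmic-1}, while the surge indices are controlled by a geometric doubling argument ($S$ at least doubles at each surge, so $S_{t_j-1}\geq 2^{j-1}\alpha_{0}$ and the surge contribution telescopes to under $2\alpha/\alpha_{0}$). The bookkeeping at the end (paying an extra $2\alpha/\alpha_{0}$ to truncate the logarithm's argument to $\nRuns-1$ terms via $\log(1+x+y)\leq\log(1+x)+y$) is clean and lands on precisely $2+4\alpha/\alpha_{0}+2\log(1+\sum_{\run=1}^{\nRuns-1}\alpha_{\run}/\alpha_{0})$. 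The two approaches cost the same; yours isolates more transparently \emph{why} the uniform bound $\alpha_{\run}\leq\alpha$ is needed (only to cap each individual surge term), whereas the paper's uses it both to define the threshold time and to bound the head of the sum. Either argument is acceptable.
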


\begin{proof}
Let us denote
\begin{equation}
\txs
\nRuns_{0}
	= \min \setdef[\big]{\run \in [\nRuns]}{\sum_{j=1}^{\run-1}\alpha_{j}\geq \alpha}
\end{equation}
Then, dividing the sum by $\nRuns_{0}$, we get:
\begin{align}
\sum_{\run=1}^{\nRuns}\frac{\alpha_{\run}}{\alpha_{0}+\sum_{i=1}^{\run-1}\alpha_{i}}&
	\leq \sum_{\run=1}^{\nRuns_{0}-1}\frac{\alpha_{\run}}{\alpha_{0}+\sum_{i=1}^{\run-1}\alpha_{i}}  
	+\sum_{\run=\nRuns_{0}}^{\nRuns}\frac{\alpha_{\run}}{\alpha_{0}+\sum_{i=1}^{\run-1}\alpha_{i}}
	\notag\\
	&\leq \frac{1}{\alpha_{0}}\sum_{\run=1}^{\nRuns_{0}-1}\alpha_{\run}+\sum_{\run=\nRuns_{0}}^{\nRuns}
	\dfrac{\alpha_{\run}}{1/2\alpha_{0}+1/2\alpha+1/2\sum_{j=1}^{\run-1}\alpha_{j}}
	\notag\\
	&\leq \frac{\alpha}{\alpha_{0}}+2\sum_{\run=\nRuns_{0}}^{\nRuns}\dfrac{\alpha_{i}/\alpha_{0}}{1+\sum_{j=\nRuns_{0}}^{\run}\alpha_{j}/\alpha_{0}}
	\notag\\
	&\leq \frac{2\alpha}{\alpha_{0}}
		+ 2
		+ 2\log\parens*{1+\sum_{\run=\nRuns_{0}}^{\nRuns}\alpha_{i}/\alpha_{0}}
	\notag\\
	&\leq \frac{2\alpha}{\alpha_{0}}
		+ 2
		+ 2\log\parens*{1 + \sum_{\run=1}^{\nRuns}\alpha_{i}/\alpha_{0}}
\end{align}
where we used the fact that $\sum_{j=1}^{\nRuns_{0}-2}\alpha_{j}\leq \alpha$ as well as for all $\run \geq \nRuns_{0}, \sum_{j=1}^{\run-1}\alpha_{j}\geq \alpha$ (both follow from the definition of $\nRuns_{0}$) and \cref{lem:logarithmic-1}.
\end{proof}

\section{Fisher markets: A case study}
\label{app:Fisher}

\subsection{The Fisher market model}

In this appendix, we illustrate the convergence properties of \method in a Fisher equilibrium problem with linear utilities \textendash\ both stochastic and deterministic.
Following \cite{NRTV07}, a Fisher market consists of a set $\players = \{1,\dotsc,\nPlayers\}$ of $\nPlayers$ \emph{buyers} \textendash\ or \emph{players} \textendash\ that seek to share a set $\pures = \{1,\dotsc,\nPures\}$ of $\nPures$ perfectly divisible goods (ad space, CPU/GPU runtime, bandwidth, etc.).
The allocation mechanism for these goods follows a proportionally fair price-setting rule that is sometimes referred to as a \emph{Kelly auction} \citep{KMT98}:
each player $\play = 1,\dotsc,\nPlayers$ bids $\point_{\play\pure}$ per unit of the $\pure$-th good, up the player's individual budget;
for the sake of simplicity, we assume that this budget is equal to $1$ for all players, so $\sum_{\pure=1}^{\nPures} \point_{\play\pure} \leq 1$ for all $\play=1,\dotsc,\nPlayers$.
The price of the $\pure$-th good is then set to be the sum of the players' bids, \ie $\price_{\pure} = \sum_{\play\in\players} \point_{\play\pure}$;
then, each player gets a prorated fraction of each good, namely $\weight_{\play\pure} = \point_{\play\pure}/\price_{\pure}$.

Now, if the marginal utility of the $\play$-th player per unit of the $\pure$-th good is $\param_{\play\pure}$, the agent's total utility will be
\begin{equation}
\pay_{\play}(\point_{\play};\point_{-\play})
	= \sum_{\pure\in\pures} \param_{\play\pure} \weight_{\play\pure}
	= \sum_{\pure\in\pures} \frac{\param_{\play\pure} \point_{\play\pure}}{\sum_{\playalt\in\players} \point_{\playalt\pure}},
\end{equation}
where $\point_{\play} = (\point_{\play\pure})_{\pure\in\pures}$ denotes the bid profile of the $\play$-th player, and we use the shorthand $(\point_{\play};\point_{-\play}) = (\point_{1},\dotsc,\point_{\play},\dotsc,\point_{\nPlayers})$.
A \emph{Fisher equilibrium} is then reached when the players' prices bids follow a profile $\eq = (\eq_{1},\dotsc,\eq_{\nPlayers})$ such that
\begin{equation}
\label{eq:Nash}
\tag{Eq}
\pay_{\play}(\eq_{\play};\eq_{-\play})
	\geq \pay_{\play}(\point_{\play};\eq_{-\play})
\end{equation}
for all $\play\in\players$ and all $\point_{\play} = (\point_{\play\pure})_{\pure\in\pures}$ such that $\point_{\play\pure} \geq 0$ and $\sum_{\pure\in\pures} \point_{\play\pure} = 1$.%
\footnote{It is trivial to see that, in this market problem, all users would saturate their budget constraints at equilibrium, \ie $\sum_{\pure\in\pures}\point_{\play\pure} = 1$ for all $\play \in \players$.}

As was observed by \citet{Shm09}, the equilibrium problem \eqref{eq:Nash} can be rewritten equivalently as
\begin{equation}
\label{eq:opt}
\tag{Opt}
\begin{aligned}
\textrm{minimize}
	&\quad
	\sobj(\point;\param)
		\equiv \sum_{\pure\in\pures} \price_{\pure} \log\price_{\pure}
		- \sum_{\play\in\players} \sum_{\pure\in\pures} \point_{\play\pure} \log\param_{\play\pure}
	\\
\textrm{subject to}
	&\quad
	\price_{\pure} = \sum_{\play\in\players} \point_{\play\pure},\,
	\sum_{\pure\in\pures} \point_{\play\pure} = 1,\;
	\textrm{and}\;
	\point_{\play\pure} \geq 0
	\;
	\text{for all $\pure\in\pures$, $\play\in\players$},
\end{aligned}
\end{equation}
with the standard continuity convention $0\log0 = 0$.
In the above, the agents' marginal utilities are implicitly assumed fixed throughout the duration of the game.
On the other hand, if these utilities fluctuate stochastically over time, the corresponding reformulation instead involves the \emph{mean} objective
\begin{equation}
\label{eq:obj}
\obj(\point)
	= \exof{\sobj(\point;\sample)}.
\end{equation}
Because of the logarithmic terms involved, $\sobj$ (and, a fortiori, $\obj$) cannot be Lipschitz continuous or smooth in the standard sense.
However, as was shown by \citet{BDX11}, the problem satisfies \eqref{eq:RS} over $\points = \setdef{\point\in\R_{+}^{\nPlayers\nPures}}{\sum_{\pure\in\pures}\point_{\play\pure} = 1}$ relative to the negative entropy function $\hreg(\point) = \sum_{\play\pure} \point_{\play\pure}\log\point_{\play\pure}$.
As a result, \acl{MD} methods based on this Bregman function are natural candidates for solving \eqref{eq:obj}.


\begin{figure}[t]
\centering
\begin{subfigure}[b]{.49\linewidth}
\includegraphics[width=\textwidth]{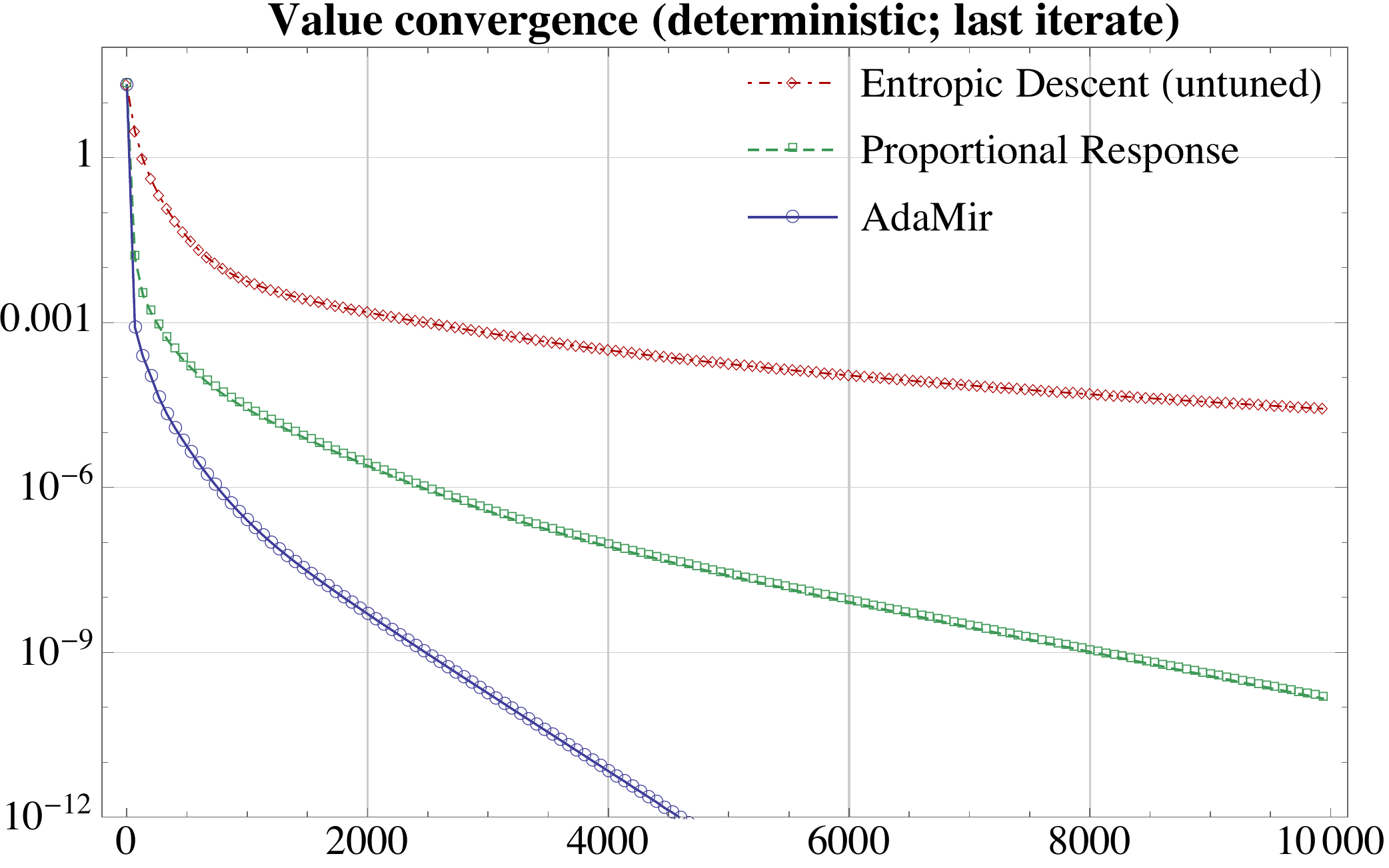}
\caption{Last-iterate convergence}
\label{fig:det-last}
\end{subfigure}
\hfill
\begin{subfigure}[b]{.475\linewidth}
\includegraphics[width=\textwidth]{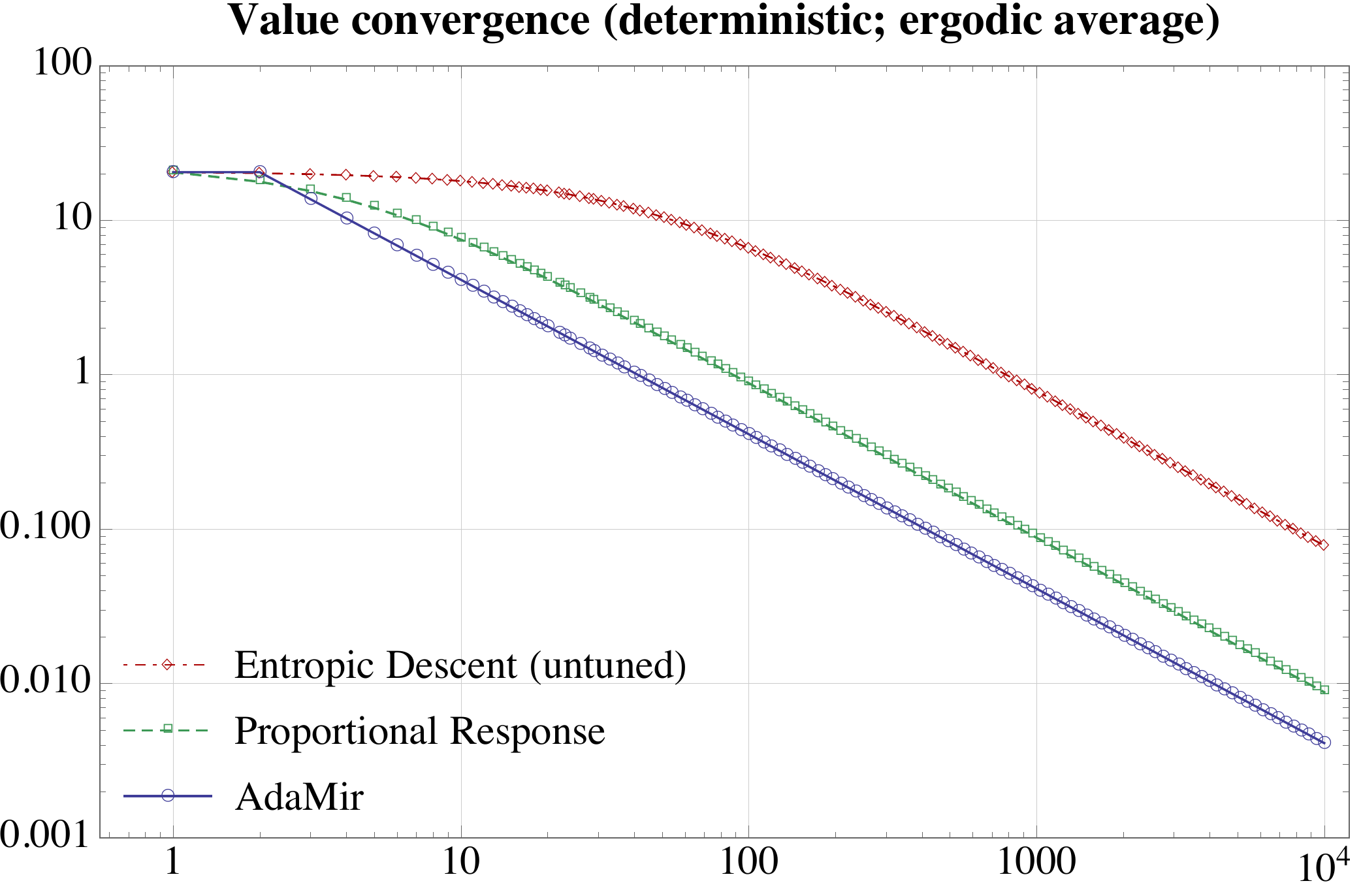}
\caption{Ergodic convergence}
\label{fig:det-erg}
\end{subfigure}
\caption{The convergence speed of \eqref{eq:EGD}, \eqref{eq:PR} and \method in a stationary Fisher market.}
\label{fig:det}
\end{figure}


In more detail, following standard arguments \citep{BecTeb03}, the general \acl{MD} template \eqref{eq:MD} relative to $\hreg$ can be written as
\begin{equation}
\new[\point_{\play\pure}]
	= \frac
		{\point_{\play\pure} \exp(-\step\gvec_{\play\pure})}
		{\sum_{\purealt\in\pures} \point_{\play\purealt} \exp(-\step\gvec_{\play\purealt})}
\end{equation}
where the (stochastic) gradient vector $\gvec \equiv \gvec(\point;\param)$ is given in components by
\begin{equation}
\gvec_{\play\pure}
	= 1 + \log \price_{\pure} - \log\param_{\play\pure}.
\end{equation}
Explicitly, this leads to the \acl{EGD} algorithm
\begin{equation}
\label{eq:EGD}
\tag{EGD}
\state_{\play\pure,\run+1}
	= \frac
		{\state_{\play\pure,\run} (\param_{\play\pure}/\price_{\pure})^{\step_{\run}}}
		{\sum_{\purealt\in\pures} \state_{\play\purealt,\run}(\param_{\play\purealt}/\price_{\purealt})^{\step_{\run}}}
\end{equation}
In particular, as a special case, the choice $\step = 1$ gives the \acdef{PR} algorithm of \citet{WZ07}, namely
\begin{equation}
\label{eq:PR}
\tag{PR}
\state_{\play\pure,\run+1}
	= \frac{\param_{\play\pure}\weight_{\play\pure,\run}}{\sum_{\purealt\in\pures} \param_{\play\purealt} \weight_{\play\purealt,\run}},
\end{equation}
where $\weight_{\play\pure,\run} = \state_{\play\pure,\run} \big/ \sum_{\playalt\in\players} \state_{\playalt\pure,\run}$.
As far as we aware, the \ac{PR} algorithm is considered to be the most efficient method for solving \emph{deterministic} Fisher equilibrium problems \citep{BDX11}.

\subsection{Experimental validation and methodology}

For validation purposes, we ran a series of numerical experiments on a synthetic Fisher market model with $\nPlayers = 50$ players sharing $\nPures = 5$ goods, and utilities drawn uniformly at random from the interval $[2,8]$.
For stationary markets, the players' marginal utilities were drawn at the outset of the game and were kept fixed throughout;
for stochastic models, the parameters were redrawn at each stage around the mean value of the stationary model (for consistency of comparisons).
All experiments were run on a MacBook Pro with a 6-Core Intel i7 CPU clocking in at 2.6GHZ and 16 GB of DDR4 RAM at 2667 MHz.
The Mathematica notebook used to generate the raw data and run the algorithms is included as part of the supplement (but not the entire sequence of random seed used in the stochastic case, as this would exceed the OpenReview upload limit).


\begin{figure}[t]
\centering
\begin{subfigure}[b]{.49\linewidth}
\includegraphics[width=\textwidth]{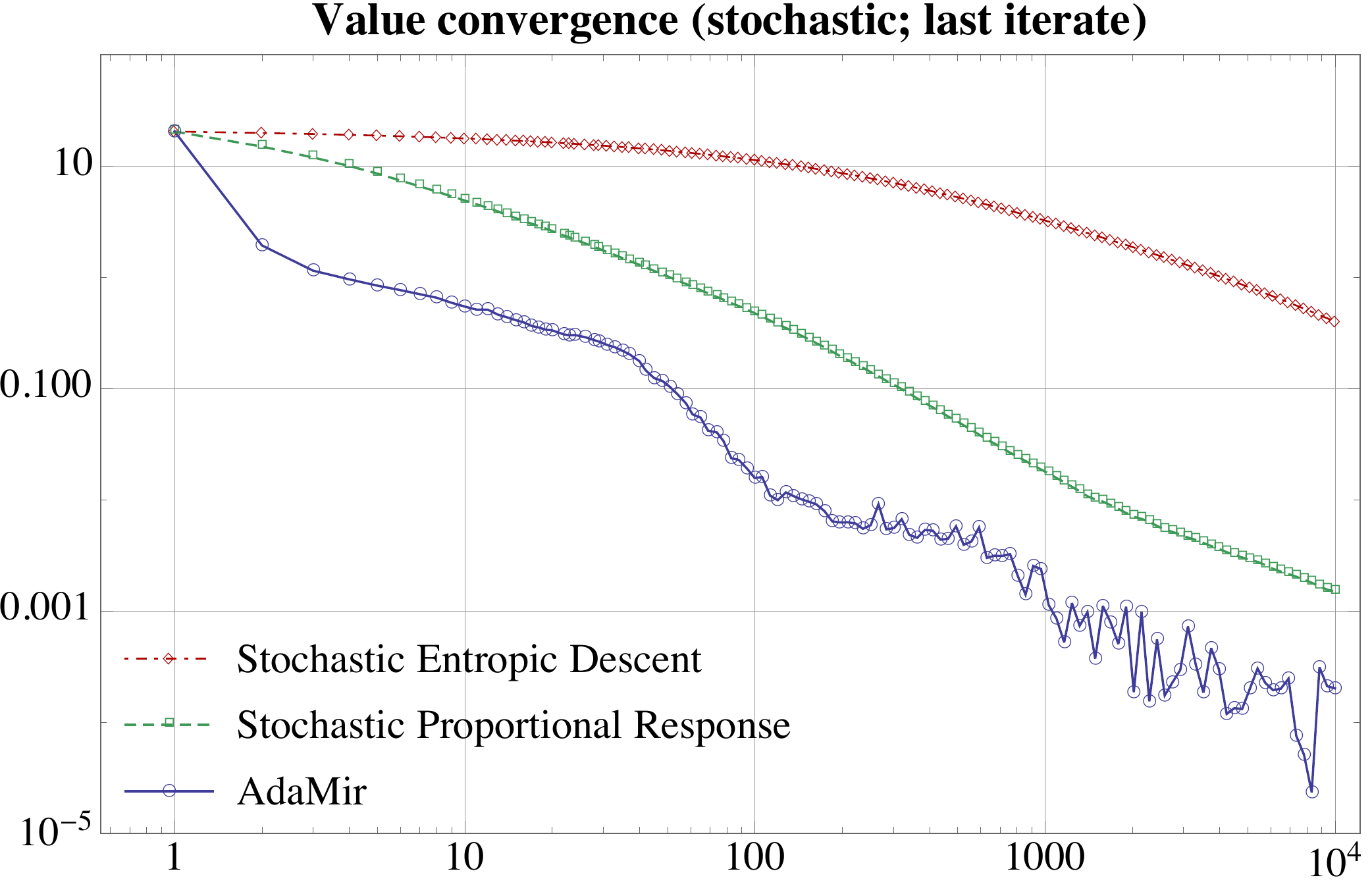}
\caption{Last-iterate convergence}
\label{fig:sample-last}
\end{subfigure}
\hfill
\begin{subfigure}[b]{.484\linewidth}
\includegraphics[width=\textwidth]{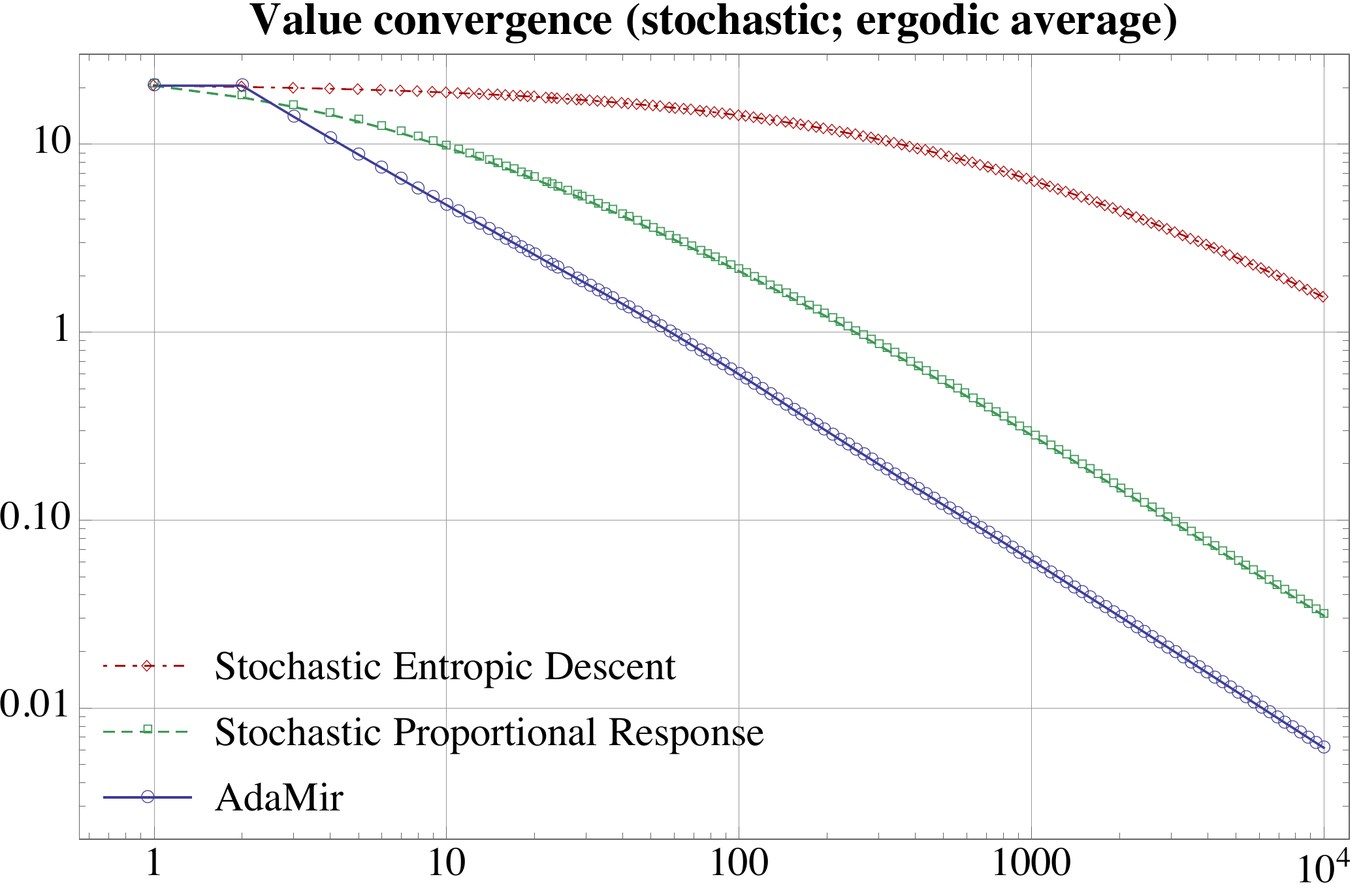}
\caption{Ergodic convergence}
\label{fig:sample-erg}
\end{subfigure}
\caption{The convergence speed of \eqref{eq:EGD}, \eqref{eq:PR} and \method in a stochastic Fisher market, with marginal utilities drawn \acs{iid} at each epoch.}
\label{fig:sample}
\end{figure}



\begin{figure}[tbp]
\centering
\begin{subfigure}[b]{.49\linewidth}
\includegraphics[height=\textwidth]{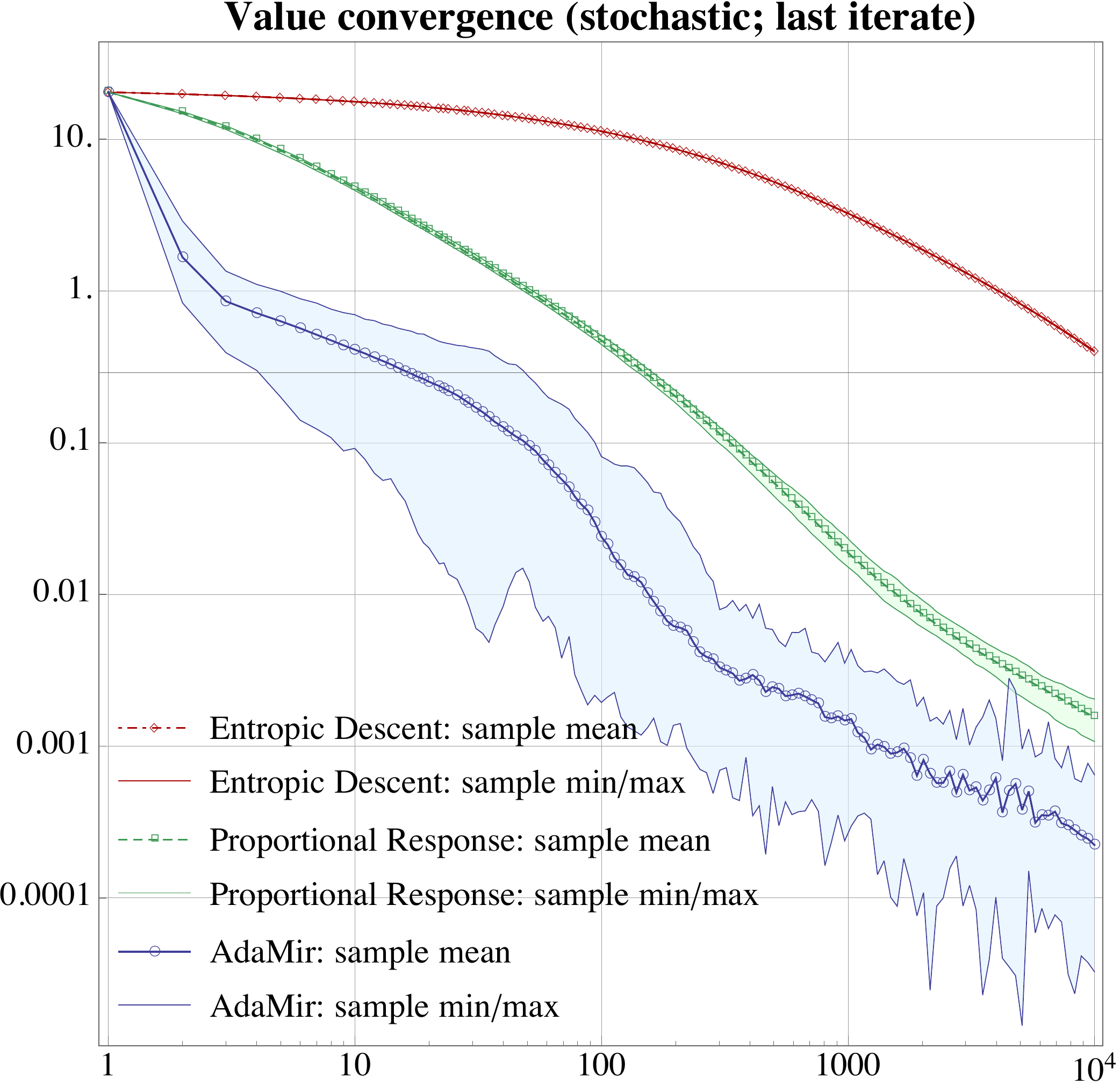}
\caption{Last-iterate convergence}
\label{fig:stoch-last}
\end{subfigure}
\hfill
\begin{subfigure}[b]{.49\linewidth}
\includegraphics[height=\textwidth]{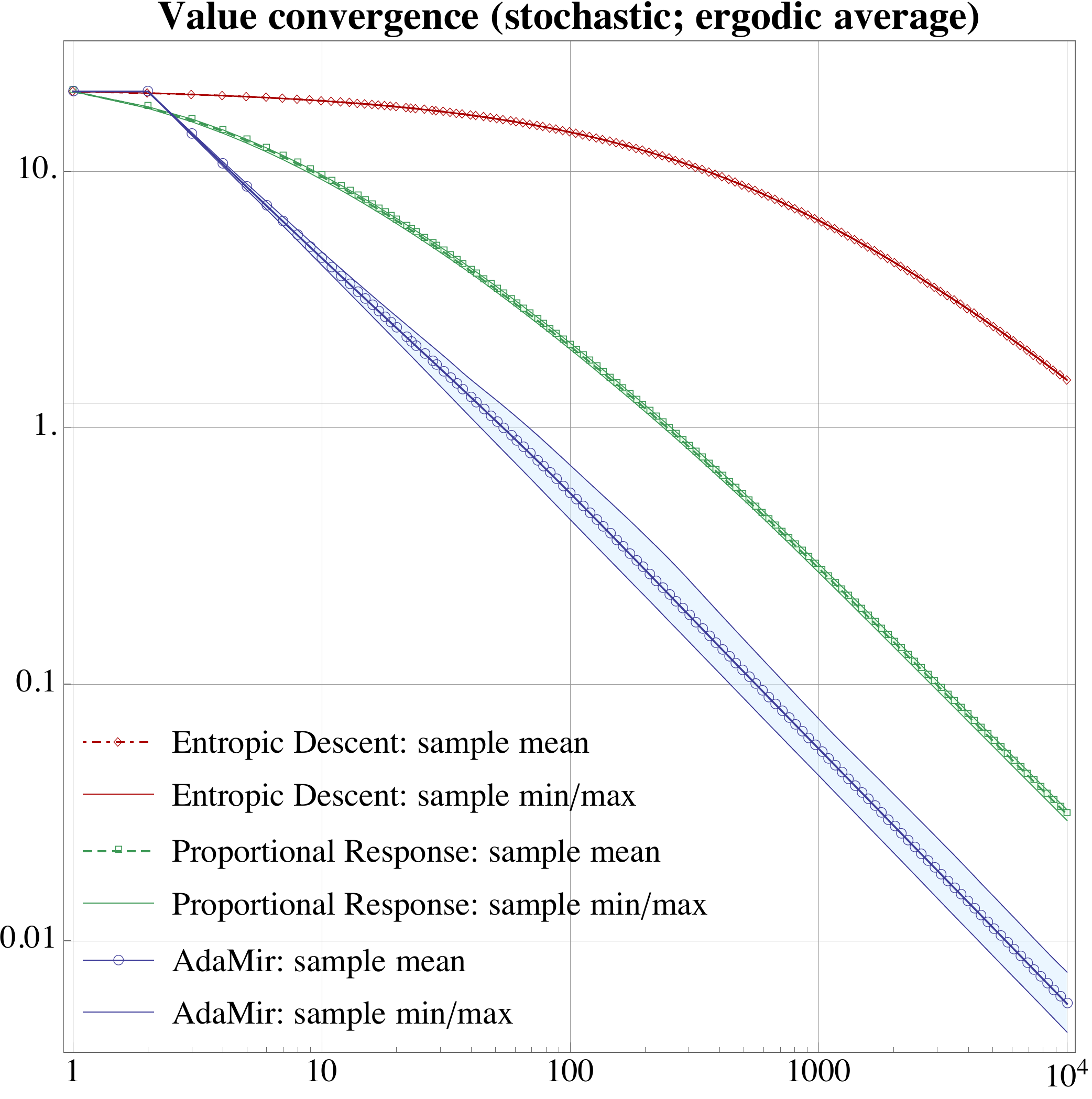}
\caption{Ergodic convergence}
\label{fig:stoch-erg}
\end{subfigure}
\caption{Statistics for the convergence speed of \eqref{eq:EGD}, \eqref{eq:PR} and \method in a stochastic Fisher market, with marginal utilities drawn \acs{iid} at each epoch.
The marked lines are the observed means from $S=50$ realizations, whereas the shaded areas represent a 95\% confidence interval.}
\label{fig:stoch}
\end{figure}


In each regime, we tested three algorithms, all initialized at the barycenter of $\points$:
\begin{enumerate*}
[\itshape a\upshape)]
\item
an untuned version of \eqref{eq:EGD};
\item
the \acl{PR} algorithm \eqref{eq:PR};
and
\item
\method.
\end{enumerate*}
For stationary markets, we ran the untuned version of \eqref{eq:EGD} with a step-size of $\step=.1$;
\eqref{eq:PR} was ran ``as is'', and \method was run with $\res_{\prestart}$ determined by drawing a second initial condition from $\points$.
In the stochastic case, following the theory of \citet{Lu19} and \citet{ABM20}, the updates of \eqref{eq:EGD} and \eqref{eq:PR} were modulated by a $\sqrt{\run}$ factor to maintain convergence;
by contrast, \method was run unchanged to test its adaptivity properties.

The results are reported in \cref{fig:det,fig:sample,fig:stoch}.
For completeness, we plot the evolution of each method in terms of values of $\obj$, both for the ``last iterate'' $\state_{\run}$ and the ``ergodic average'' $\bar\state_{\run}$.
The results for the deterministic case are presented in \cref{fig:det}.
For stochastic market models, we present a sample realization in \cref{fig:sample}, and a statistical study over $S=50$ sample realizations in \cref{fig:stoch}.
In all cases, \method outperforms both \eqref{eq:EGD} and \eqref{eq:PR}, in terms of both last-iterate and time-average guarantees.

An interesting observation is that each method's last iterate exhibits faster convergence than its time-average, and the convergence speed of the methods' time-averaged trajectories is faster than our worst-case predictions.
This is due to the specific properties of the Fisher market model under consideration:
more often than not, players tend to allocate all of their budget to a single good, so almost all of the problem's inequality constraints are saturated at equilibrium.
Geometrically, this means that the problem's solution lies in a low-dimensional face of $\points$, which is identified at a very fast rate, hence the observed accelerated rate of convergence.
However, this is a specificity of the market model under consideration and should not be extrapolated to other convex problems \textendash\ or other market equilibrium models to boot.

\section*{Acknowledgments}
{\small
This research was partially supported by
the French National Research Agency (ANR) in the framework of
the ``Investissements d'avenir'' program (ANR-15-IDEX-02),
the LabEx PERSYVAL (ANR-11-LABX-0025-01),
MIAI@Grenoble Alpes (ANR-19-P3IA-0003),
and the grants ORACLESS (ANR-16-CE33-0004) and ALIAS (ANR-19-CE48-0018-01).}

\bibliographystyle{plainnat}
\bibliography{IEEEabrv,bibtex/Bibliography-PM}

\end{document}